 \newtheorem{thm}{Theorem}[section]
 \newtheorem{cor}[thm]{Corollary}
 \newtheorem{lem}[thm]{Lemma}
 \newtheorem{prop}[thm]{Proposition}
 \theoremstyle{definition}
 \newtheorem{defn}[thm]{Definition}
 \theoremstyle{remark}
 \numberwithin{equation}{section}
  \theoremstyle{plain}
  \newtheorem*{lem*}{Lemma}
  \theoremstyle{plain}
  \newtheorem*{thm*}{Theorem}
\begin{document}

\title[Eigenfunction Statistics for a Point Scatterer on a 3D 
Torus]{Eigenfunction Statistics for a Point Scatterer on a Three-Dimensional
Torus}

\author{Nadav Yesha}

\begin{abstract}
In this paper we study eigenfunction statistics for a point scatterer
(the Laplacian perturbed by a delta-potential) on a three-dimensional
flat torus. The eigenfunctions of this operator are the eigenfunctions
of the Laplacian which vanish at the scatterer, together with a set
of new eigenfunctions (perturbed eigenfunctions). We first show that
for a point scatterer on the standard torus all of the perturbed eigenfunctions
are uniformly distributed in configuration space. Then we investigate
the same problem for a point scatterer on a flat torus with some irrationality
conditions, and show uniform distribution in configuration space for
almost all of the perturbed eigenfunctions.
\end{abstract}

\address{%
Raymond and Beverly Sackler School of Mathematical Sciences\\
Tel Aviv University\\
Tel Aviv 69978\\
Israel}

\email{nadavye1@post.tau.ac.il}

\maketitle

\section{Introduction}

One of the key results in the field of Quantum Chaos is Schnirelman's
quantum ergodicity theorem \cite{Schnirelman,Colin2,Zelditch}, which
asserts that quantum systems whose classical counterpart have chaotic
dynamics are quantum ergodic, in the sense that for almost all eigenstates
the expectation values of observables converge to the phase space
average, i.e. almost all eigenstates are equidistributed in phase
space. An important case is when \emph{all }expectation values converge
to the phase space average - such behavior is called quantum \emph{unique}
ergodicity.

The opposite of chaotic systems in classical mechanics are systems
with integrable dynamics, whose behavior is predictable over a long
period of time. In this paper we study eigenfunction statistics for
a point scatterer on a three-dimensional flat torus, which is an intermediate
system - its classical dynamics is close to integrable, yet the quantum
system is substantially influenced by the scatterer, and therefore
shares some of the behavior of classically chaotic quantum systems.
We study quantum ergodicity and quantum unique ergodicity in configuration
space (rather than in full phase space), a notion which is of growing
interest in recent research, for example in the field of control theory
(cf. \cite{Privat}).

A point scatterer is formally described by a quantum Hamiltonian 
\begin{equation}
-\Delta+\alpha\delta_{x_{0}}\label{eq:IntroScatterer}
\end{equation}
where $\delta_{x_{0}}$ is the Dirac mass at $x_{0}$ and $\alpha$
is a coupling parameter. Mathematically it is realized as a self-adjoint
extension of the Laplacian $-\Delta$ acting on functions vanishing
near $x_{0}$ (see Section § \ref{sec:Background}). Such extensions
are parametrized by a phase $\phi\in(-\pi,\pi],$ where $\phi=\pi$
corresponds to the standard Laplacian ($\alpha=0$ in \eqref{eq:IntroScatterer}).
For $\phi\neq\pi$, the eigenfunctions of the corresponding operator
consist of eigenfunctions of the Laplacian which vanish at the scatterer,
and new eigenfunctions (perturbed eigenfunctions).

In two dimensions, Rudnick and Ueberschär \cite{Rudnick} proved quantum
ergodicity in configuration space regarding the perturbed eigenfunctions
of a point scatterer on the flat torus $\mathbb{T}^{2}=\mathbb{R}^{2}/2\pi\mathcal{L}_{0}$
where $\mathcal{L}_{0}=\mathbb{Z}\left(1/a,0\right)\oplus\mathbb{Z}\left(a,0\right)$
is (any) unimodular lattice, i.e. they proved that almost all of the
perturbed eigenfunctions are uniformly distributed in configuration
space. Our goal is to prove a similar result for a point scatterer
on the three-dimensional torus, showing uniform distribution in configuration
space for almost all (and hopefully \emph{all}) of the perturbed eigenfunctions.

We remark that in dimensions four and greater, the Laplacian $ -\Delta $ acting on functions vanishing near $ x_0 $ is essentially self-adjoint, so there are no non-trivial self-adjoint extensions in those cases.

The three-dimensional problem provides some essential differences
from the two-dimensional case. For example Weyl's law for the three-dimensional
torus, establishing the asymptotics of the counting function $N\left(x\right)$
of eigenvalues of the Laplacian below $x$, reads as $N\left(x\right)\sim Cx^{3/2}$
for some constant $C$, while in two dimensions we have $N\left(x\right)\sim Cx$,
so we deduce completely different bounds for the density of the perturbed
eigenvalues in each case. Moreover, there are major differences in
the behavior of the eigenvalues of the Laplacian on different three-dimensional
tori (and therefore in the behavior of the perturbed eigenfunctions),
so instead of a general theorem, we will investigate two main cases:
the case of the standard three-dimensional flat torus, and the case
of an irrational torus where the multiplicities of the corresponding
eigenvalues of the Laplacian are bounded.

In the case of the standard torus $\mathbb{T}^{3}=\mathbb{R}^{3}/2\pi\mathbb{Z}^{3}$, the eigenvalues of the Laplacian are the integers which are sums of three squares, and the multiplicity of each eigenvalue is its number of representations as such sums, so we can use some arithmetic properties of sums of three squares and their number of representations to show a stronger result -- we show that for a point scatterer on the standard torus,
\emph{all} of the perturbed eigenfunctions are uniformly distributed
in configuration space. More precisely, for every $\phi\in\left(-\pi,\pi\right)$,
we will have a set of perturbed eigenvalues $\Lambda_{\phi},$ with
the corresponding $L^{2}$-normalized eigenfunctions $g_{\lambda}$
$\left(\lambda\in\Lambda_{\phi}\right)$. We prove the following theorem:
\begin{thm}
\label{thm:Main1}Let $\mathbb{T}^{3}=\mathbb{R}^{3}/2\pi\mathbb{Z}^{3}$ be the standard torus. Fix $\phi\in\left(-\pi,\pi\right).$ Then for all
observables $a\in C^{\infty}\left(\mathbb{T}^{3}\right)$%
\footnote{Consequently, Theorem \ref{thm:Main1} still holds for all observables
which are Riemann integrable on $\mathbb{T}^{3}$. The same is true
for Theorem \ref{thm:Main2}.%
},
\[
\int_{\mathbb{T}^{3}}a\left(x\right)\left|g_{\lambda}\left(x\right)\right|^{2}\mbox{d}x\to\frac{1}{\mbox{area}\left(\mathbb{T}^{3}\right)}\int_{\mathbb{T}^{3}}a\left(x\right)\mbox{d}x
\]
as $\lambda\to\infty$ along $\Lambda_{\phi}$.
\end{thm}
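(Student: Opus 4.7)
My plan is to make the perturbed eigenfunctions explicit via the Green's function of the Laplacian at the perturbed energy $\lambda$, and then reduce the quantum ergodicity statement to an arithmetic cancellation estimate that exploits specific properties of the representation function $r_3(n)$ for sums of three squares.

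First, from the self-adjoint extension theory for $-\Delta+\alpha\delta_{x_0}$ on $\mathbb{T}^3$ (we may take $x_0=0$), I would derive that each perturbed eigenfunction $g_\lambda$ is, up to normalization, the Green's function $G_\lambda(\cdot,0)$. Expanding in Fourier series this reads
\[
g_\lambda(x) = \frac{c_\lambda}{(2\pi)^{3/2}} \sum_{\xi \in \mathbb{Z}^3} \frac{e^{i\langle \xi, x\rangle}}{|\xi|^2 - \lambda},
\]
with the admissible $\lambda\in\Lambda_\phi$ characterised by a regularised trace equation involving $\phi$, and the $L^2$-normalization giving $|c_\lambda|^{-2}=\sum_{n} r_3(n)(n-\lambda)^{-2}$, whose order of magnitude can be read off from Weyl's law $N(x)\sim Cx^{3/2}$ together with arithmetic averages of $r_3$.

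Next, I would expand $a=\sum_\zeta\hat a(\zeta)e^{i\langle\zeta,x\rangle}$ in a Fourier series and compute
\[
\int_{\mathbb{T}^3} a(x)\,|g_\lambda(x)|^2\,dx = \sum_{\zeta\in\mathbb{Z}^3} \hat a(\zeta)\,|c_\lambda|^2 \sum_{\xi \in \mathbb{Z}^3} \frac{1}{(|\xi|^2-\lambda)\,(|\xi+\zeta|^2-\lambda)}.
\]
The $\zeta=0$ contribution is exactly $\hat a(0)=\mathrm{area}(\mathbb{T}^3)^{-1}\int a$, so the theorem reduces to showing that, for every fixed $\zeta\neq 0$,
\[
S_\zeta(\lambda):=|c_\lambda|^2\sum_{\xi\in\mathbb{Z}^3}\frac{1}{(|\xi|^2-\lambda)\,(|\xi+\zeta|^2-\lambda)}\longrightarrow 0
\]
as $\lambda\to\infty$ along $\Lambda_\phi$, with a bound uniform enough in $\zeta$ that the smoothness of $a$ lets us exchange the limit with $\sum_\zeta\hat a(\zeta)$. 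Using the partial-fraction identity
\[
\frac{1}{(|\xi|^2-\lambda)\,(|\xi+\zeta|^2-\lambda)} = \frac{1}{2\langle\xi,\zeta\rangle+|\zeta|^2}\left(\frac{1}{|\xi|^2-\lambda}-\frac{1}{|\xi+\zeta|^2-\lambda}\right),
\]
I reduce $S_\zeta(\lambda)$ to sums with only one resolvent factor against the weight $(2\langle\xi,\zeta\rangle+|\zeta|^2)^{-1}$; grouping by $n=|\xi|^2$ turns each into a weighted sphere-average over the $r_3(n)$ lattice points with $|\xi|^2=n$.

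The main obstacle will be the near-resonance regime $|\xi|^2\approx\lambda$, where $(|\xi|^2-\lambda)^{-1}$ is large and the quantization condition cutting out $\Lambda_\phi$ must really be exploited. To push the conclusion from \emph{almost all} to \emph{all} $\lambda\in\Lambda_\phi$, I expect to need two genuinely three-dimensional arithmetic inputs: Gauss's bound $r_3(n)\ll n^{1/2+\varepsilon}$ (so no eigenspace is pathologically large), and the fact that sums of three squares have positive density in $\mathbb{N}$ with good spacing (the complement being the $4^a(8b+7)$ sequence), together with equidistribution of lattice points on the spheres $|\xi|^2=n$ via Linnik--Duke type results or direct theta-series manipulation. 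Combined with the explicit quantization relation linking $\lambda$ and $\phi$, these inputs should yield $S_\zeta(\lambda)\to 0$ along the entire sequence $\Lambda_\phi$, and thereby the uniform distribution in configuration space of every perturbed eigenfunction.
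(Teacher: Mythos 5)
Your reduction to showing $S_\zeta(\lambda)\to 0$ for each fixed $\zeta\neq 0$ matches the paper's framework, and your intuition that the near-resonant terms $|\xi|^2\approx\lambda$ are the crux is right. However, there is a concrete gap at precisely that crux, and your proposed tools don't close it.

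The partial-fraction identity you invoke divides by $2\langle\xi,\zeta\rangle+|\zeta|^2$, so it fails identically on the set $\{\xi:2\langle\xi,\zeta\rangle+|\zeta|^2=0\}$, i.e.\ on the $\xi$ with $|\xi+\zeta|^2=|\xi|^2$. For such $\xi$ the term in $S_\zeta(\lambda)$ is $(|\xi|^2-\lambda)^{-2}$ — a \emph{diagonal} term, as large as the normalization $|c_\lambda|^{-2}$ itself can be. If $\xi$ lies on the sphere $|\xi|^2=n_\lambda$ nearest $\lambda$, this single term can contribute an amount comparable to $1$ after normalization, unless one can argue that either (i) no such $\xi$ exists with $|\xi|^2=n_\lambda$, or (ii) $r_3(n_\lambda)$ is large enough that $\|G_\lambda\|_2^2$ absorbs the contribution. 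This dichotomy is exactly what the paper establishes, and it is where the genuinely three-dimensional arithmetic lives: the elementary but decisive lemma $4^a\mid|\xi|^2\Longleftrightarrow 2^a\mid\xi$ shows that if $n_\lambda$ is divisible by a higher power of $4$ than $|\zeta|^2$, then $2\langle\xi,\zeta\rangle-|\zeta|^2$ is a nonzero integer for every $\xi$ with $|\xi|^2=n_\lambda$, killing the diagonal terms; otherwise, writing $n_\lambda=4^a n_1$ with $4\nmid n_1$ and $a$ bounded, $r_3(n_\lambda)=r_3(n_1)\gg_\varepsilon n_\lambda^{1/2-\varepsilon}$ by Siegel, so $\|G_\lambda\|_2^2$ is large. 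Your ``positive density with good spacing'' observation about sums of three squares is orthogonal to this; it only guarantees $|n_\lambda-\lambda|$ is bounded, not the structure of lattice points on the critical sphere.

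Two further mismatches. First, for the off-diagonal lattice points you propose Linnik--Duke equidistribution, but that is both overkill and not obviously in the right form: what the argument actually needs is an upper bound for $\#\{\eta:|\eta|^2=n,\ |\langle\eta,\zeta\rangle|<L\}$, which the paper obtains elementarily by reducing to representations by binary forms $x^2+Dy^2$ and bounding $r_D(k)\le 6\tau(k)$ via ideal factorization in $\mathbb{Q}(\sqrt{-D})$. Second, you don't mention the truncation to $\bigl||\xi|^2-\lambda\bigr|<L=\lambda^\delta$ that the paper performs before any of this; without it, the partial-fraction rearrangement produces conditionally convergent sums over all of $\mathbb{Z}^3$ that are delicate to estimate, whereas the paper first shows (using $\|G_\lambda\|_2^2\gg\lambda^{1/2-\varepsilon}$) that the far tail is negligible and then works only in the truncated window. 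Finally, the uniformity in $\zeta$ you worry about is not needed: one proves the claim for each trigonometric polynomial (finitely many $\zeta$) and passes to general $a\in C^\infty(\mathbb{T}^3)$ by density in the sup-norm.
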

Then we show a similar result for a point scatterer on an irrational
torus, but with convergence only along a density one set in the set
of the perturbed eigenvalues: consider the family of flat tori $\mathbb{T}^{3}=\mathbb{R}^{3}/2\pi\mathcal{L}_{0}$,
where 
\[
\mathcal{L}_{0}=\mathbb{Z}\left(a,0,0\right)\oplus\mathbb{Z}\left(0,b,0\right)\oplus\mathbb{Z}\left(0,0,c\right)
\]
is a lattice such that $1/a^{2},1/b^{2},1/c^{2}\in\mathbb{R}$ are
independent over $\mathbb{Q}$. We also demand that at least one of
the ratios $b^{2}/a^{2},\, c^{2}/a^{2},\, c^{2}/b^{2}$ will be an
irrational of finite type, as in the following definition:
\begin{defn}
\label{def:Type1Irrational}An irrational $\alpha$ is said to be
of finite type $\tau\in\mathbb{R}$, if $\tau$ is the supremum of
all $\gamma$ for which $\underline{\lim}_{q\to\infty}q^{\gamma}\left\|q\alpha\right\|=0,$
where $q$ runs through the positive integers. Here 
\[
\left\|t\right\|=\min_{n\in\mathbb{Z}}\left|t-n\right|=\min\left(\left\{ t\right\} ,\left\{ -t\right\} \right)
\]
denotes the distance from $t$ to the nearest integer.

In particular, if $\alpha$ is an irrational of finite type $\tau,$
then for every $\varepsilon>0$, there exists a positive constant
$c=c\left(\alpha,\varepsilon\right)$ such that $\left\|q\alpha\right\|\geq\frac{c}{q^{\tau+\varepsilon}}$
holds for all positive integers $q$. Also note that by Dirichlet's
Theorem we must have $\tau\geq1$, and every algebraic irrational
is of type $1$ due to the theorem of Roth \cite{Roth}.

As in the case of the standard torus, for every $\phi\in\left(-\pi,\pi\right)$,
we will have a set of perturbed eigenvalues $\Lambda_{\phi},$ with
the corresponding $L^{2}$-normalized eigenfunctions $g_{\lambda}$,
and we prove: \end{defn}
\begin{thm}
\label{thm:Main2}Let $\mathbb{T}^{3}=\mathbb{R}^{3}/2\pi\mathcal{L}_{0}$ be an irrational torus as defined above. Fix $\phi\in\left(-\pi,\pi\right).$ There is a
subset $\Lambda_{\phi,\infty}\subseteq\Lambda_{\phi}$ of density
one so that for all observables $a\in C^{\infty}\left(\mathbb{T}^{3}\right)$,
\[
\int_{\mathbb{T}^{3}}a\left(x\right)\left|g_{\lambda}\left(x\right)\right|^{2}\mbox{d}x\to\frac{1}{\mbox{area}\left(\mathbb{T}^{3}\right)}\int_{\mathbb{T}^{3}}a\left(x\right)\mbox{d}x
\]
as $\lambda\to\infty$ along $\Lambda_{\phi,\infty}$.
\end{thm}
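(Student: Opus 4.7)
The plan is to adapt the framework Rudnick and Uebersch\"ar developed in two dimensions to the richer lattice structure available in three dimensions. The starting point is the explicit Fourier description of a perturbed eigenfunction coming from the Krein-type spectral equation for $\Lambda_{\phi}$: up to an overall normalization,
\[
g_{\lambda}(x)=\frac{1}{\sqrt{H(\lambda)}}\sum_{\xi\in\mathcal{L}_{0}^{*}}\frac{e^{i\xi\cdot x}}{|\xi|^{2}-\lambda},\qquad H(\lambda)=\sum_{\xi\in\mathcal{L}_{0}^{*}}\frac{1}{(|\xi|^{2}-\lambda)^{2}},
\]
where the $\lambda\in\Lambda_{\phi}$ interlace the Laplace eigenvalues $\mu=|\xi|^{2}$. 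For $a\in C^{\infty}(\mathbb{T}^{3})$ with Fourier expansion $a(x)=\sum_{\eta}\hat{a}(\eta)e^{i\eta\cdot x}$, expanding $\int_{\mathbb{T}^{3}}a|g_{\lambda}|^{2}\,\mbox{d}x$ and isolating the mean value reduces the theorem to showing
\[
S_{\eta}(\lambda):=\frac{1}{H(\lambda)}\sum_{\xi\in\mathcal{L}_{0}^{*}}\frac{1}{(|\xi|^{2}-\lambda)(|\xi+\eta|^{2}-\lambda)}\longrightarrow 0
\]
along a density one subset of $\Lambda_{\phi}$, for each fixed $\eta\neq 0$. A diagonal extraction over the countably many Fourier modes then produces a single density one set $\Lambda_{\phi,\infty}$ that works for all smooth $a$.

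Next I would split the estimate of $S_{\eta}(\lambda)$ according to how close $|\xi|^{2}$ and $|\xi+\eta|^{2}$ sit to $\lambda$. Tails with $|\xi|$ large compared to $\lambda^{1/2}$ converge uniformly in $\lambda$, using $H(\lambda)\geq (\mu^{*}-\lambda)^{-2}$ where $\mu^{*}$ is the Laplace eigenvalue nearest to $\lambda$. For the bulk near-diagonal contribution, the $\mathbb{Q}$-linear independence of $1/a^{2},1/b^{2},1/c^{2}$ forces every Laplace eigenvalue $\mu=n_{1}^{2}/a^{2}+n_{2}^{2}/b^{2}+n_{3}^{2}/c^{2}$ to have multiplicity at most $8$ (coming from the sign changes of $(n_{1},n_{2},n_{3})$), so ``shell collisions'' at the same eigenvalue are harmless after normalization. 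The delicate sub-sum is the off-diagonal piece in which the shifted shells $|\xi+\eta|^{2}$ and the unshifted shells $|\xi|^{2}$ produce two \emph{different} Laplace eigenvalues both close to $\lambda$. I would bound this piece via a second-moment calculation over $\lambda\in\Lambda_{\phi}\cap[X,2X]$, using the finite-type assumption on one of the ratios $b^{2}/a^{2},\,c^{2}/a^{2},\,c^{2}/b^{2}$ to obtain a quantitative Diophantine spacing bound of the form $\bigl||\xi|^{2}-|\xi'|^{2}\bigr|\gg_{\varepsilon}\max(|\xi|,|\xi'|)^{-\tau-\varepsilon}$ whenever the two lengths-squared differ. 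Combined with the Weyl count $|\Lambda_{\phi}\cap[0,X]|\asymp X^{3/2}$ and a Chebyshev/dyadic Borel--Cantelli argument, this yields that the set of $\lambda\in\Lambda_{\phi}$ with $|S_{\eta}(\lambda)|$ above any fixed threshold has density zero.

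The main obstacle I expect is obtaining a second-moment saving strong enough to kill the dyadic sum. In two dimensions the Laplace eigenvalues are integers and so automatically spaced by at least $1$, which makes the analogous estimate fairly clean; in three dimensions the finite-type bound is a much weaker substitute. The subtle balance is between the number of Laplace eigenvalues in a window $[\lambda,\lambda+1]$ (which now grows like $\lambda^{1/2}$ rather than staying bounded), the lower bound on shell spacings coming from the finite-type hypothesis, and the normalization $H(\lambda)$, which must be bounded below using the spectral equation defining $\Lambda_{\phi}$. Propagating the shell-spacing bound through that spectral equation to exclude a simultaneous ``double resonance'' of $\lambda$ with two distinct Laplace eigenvalues will, I expect, be the core technical step of the argument.
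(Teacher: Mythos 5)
Your high-level framework is the right one (reduce to showing $S_{\eta}(\lambda)\to 0$ along a density-one set, then diagonalize), and you correctly isolate the main obstacle -- that a window of width $1$ around $\lambda$ contains $\asymp\lambda^{1/2}$ Laplace eigenvalues rather than $O(1)$ as in 2D. But the technical claim you propose to fill that gap is not available, and in fact the paper deliberately avoids it.

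Specifically, the ``quantitative Diophantine spacing bound'' $\bigl||\xi|^{2}-|\xi'|^{2}\bigr|\gg_{\varepsilon}\max(|\xi|,|\xi'|)^{-\tau-\varepsilon}$ does not follow from the hypotheses. Eigenvalue differences have the form $z_{1}/a^{2}+z_{2}/b^{2}+z_{3}/c^{2}$ with three free integers $z_{j}$, and the finite-type assumption controls only \emph{one} of the ratios (say $c^{2}/a^{2}$). A two-term bound $\bigl|(c^{2}/a^{2})z_{1}+z_{3}\bigr|\geq\|z_{1}\,c^{2}/a^{2}\|\gg z_{1}^{-\tau-\varepsilon}$ is fine, but once $z_{2}$ is also free the quantity $(c^{2}/a^{2})z_{1}+(c^{2}/b^{2})z_{2}+z_{3}$ can be made arbitrarily small compared to any fixed negative power, since no joint Diophantine control of $c^{2}/a^{2}$ and $c^{2}/b^{2}$ is assumed. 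So the second-moment calculation as you propose it would stall precisely at the point you flag as the ``core technical step.''

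What the paper does instead is to use the finite-type hypothesis only \emph{on average}: in Lemma \ref{lem:IrrationalThinShells} it counts solutions to $\bigl||\xi|^{2}-|\eta|^{2}\bigr|<4X^{-\delta}$ in the box $|\xi|^{2},|\eta|^{2}\leq 4X$, fixing $z_{3}$ by the closeness condition and then counting pairs $(z_{1},z_{2})$ with $\{(c^{2}/a^{2})z_{1}+(c^{2}/b^{2})z_{2}\}<4X^{-\delta}$ via the discrepancy estimate \eqref{eq:DiscrepancyEst}. This is exactly where the type-$\tau$ assumption enters, and it gives a count $\ll X^{2-\delta}$ rather than a pointwise lower bound. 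From this a Chebyshev argument produces the density-one set $\Lambda_{2}$ on which $\#\tilde{A}(\lambda,3L)\leq L\lambda^{1/2+2\varepsilon}$ with $L=\lambda^{-\delta}$, and also (via the interlacing and a separate trivial argument) $\|G_{\lambda}\|_{2}^{2}\gg\lambda^{1-2\varepsilon}$. Crucially, the ``double resonance'' you worry about -- both $|\xi|^{2}$ and $|\xi-\zeta|^{2}$ near $\lambda$ -- is excluded not by a spacing bound but by a counting argument that uses no Diophantine input at all: the set $S_{\zeta}$ of lattice points $\xi$ with $|2\langle\xi,\zeta\rangle-|\zeta|^{2}|<1/(4c^{2})$ has only $O(X)$ elements of norm $\leq X$ (because the $\mathbb{Q}$-independence of $1/a^{2},1/b^{2},1/c^{2}$ rigidly determines the third coordinate from the other two), and $O(X)$ is a density-zero subset of the $\asymp X^{3/2}$ lattice points. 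Outside $S_{\zeta}$ the shifted shell is separated from $\lambda$ by an absolute constant, and the estimate closes with Cauchy--Schwarz and the bound on $\#\tilde{A}(\lambda,3L)$. You would need to replace your pointwise Diophantine claim with this kind of counting/excluded-set argument to make the proof go through.
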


\subsection*{Acknowledgments:}

This work is part of the author\textquoteright{}s M.Sc. thesis written
under the supervision of Zeev Rudnick at Tel Aviv University. Partially
supported by the Israel Science Foundation (grant No. 1083/10).

\section{Background\label{sec:Background}}

\subsection{Point Scatterers on the Flat Torus}

Let $\mathbb{T}^{3}=\mathbb{R}^{3}/2\pi\mathcal{L}_{0}$ be a flat
three-dimensional torus, where 
\[
\mathcal{L}_{0}=\mathbb{Z}\left(a,0,0\right)\oplus\mathbb{Z}\left(0,b,0\right)\oplus\mathbb{Z}\left(0,0,c\right)
\]
is a lattice.

We want to study the Schrödinger operator with a delta-potential on
the flat three-dimensional torus $\mathbb{T}^{3}$, formally given
by
\begin{equation}
-\Delta+\alpha\delta_{x_{0}}\label{eq:scatterer}
\end{equation}
 where $\Delta$ is the associated Laplacian on $\mathbb{T}^{3}$,
$\delta_{x_{0}}$ is the Dirac delta-function at the point $x_{0}$,
and $\alpha\in\mathbb{R}$ is a coupling parameter.

We now give a rigorous mathematical description for the operator \eqref{eq:scatterer}
following \cite{Colin1,Rudnick}:

Consider the domain of $C^{\infty}$-functions which vanish in a neighborhood
of $x_{0}$: 
\[
D_{0}=C_{0}^{\infty}\left(\mathbb{T}^{3}\setminus\left\{ x_{0}\right\} \right)
\]
and denote $-\Delta_{x_{0}}=-\Delta_{|D_{0}}$, which is an operator
in the Hilbert space $L^{2}\left(\mathbb{T}^{3}\right)$. One finds
that the adjoint of $-\Delta_{x_{0}}$ has as its domain $\mbox{Dom}\left(-\Delta_{x_{0}}^{*}\right)$
the Sobolev space $H^{2}\left(\mathbb{T}^{3}\setminus\left\{ x_{0}\right\} \right)$,
which equals the space of $f\in L^{2}\left(\mathbb{T}^{3}\right)$
for which there is some $A\in\mathbb{C}$ such that
\[
-\Delta f+A\delta_{x_{0}}\in L^{2}\left(\mathbb{T}^{3}\right).
\]
For such $f,$ there is some $B\in\mathbb{C}$ so that
\[
f\left(x\right)=A\cdot\frac{-1}{4\pi\left|x-x_{0}\right|}+B+o\left(1\right),\hspace{1em}x\to x_{0}.
\]
One finds that the self-adjoint extensions of $-\Delta_{x_{0}}$ are
parametrized by a phase $\phi\in(-\pi,\pi]$; denoting the corresponding
operators by $-\Delta_{\phi,x_{0}}$, their domain is given by $f\in\mbox{Dom}\left(-\Delta_{x_{0}}^{*}\right)$
for which there is some $a\in\mathbb{C}$ so that 
\[
f\left(x\right)=a\left(\cos\frac{\phi}{2}\cdot\frac{-1}{4\pi\left|x-x_{0}\right|}+\sin\frac{\phi}{2}\right)+o\left(1\right),\hspace{1em}x\to x_{0}.
\]
The action of $-\Delta_{\phi,x_{0}}$ on $f\in\mbox{Dom}\left(-\Delta_{\phi,x_{0}}\right)$
is then given by
\begin{equation}
-\Delta_{\phi,x_{0}}f=-\Delta f+A\delta_{x_{0}}=-\Delta f+a\cos\frac{\phi}{2}\delta_{x_{0}}.\label{eq:ScattererFormal}
\end{equation}
Note that for $\phi=\pi$ we have 
\[
\mbox{Dom}\left(-\Delta_{\pi,x_{0}}\right)=H^{2}\left(\mathbb{T}^{3}\right)=\left\{ f\in L^{2}\left(\mathbb{T}^{3}\right):\,-\Delta f\in L^{2}\left(\mathbb{T}^{3}\right)\right\} 
\]
 and 
\[
-\Delta_{\pi,x_{0}}f=-\Delta f
\]
so this extension retrieves the standard Laplacian $-\Delta_{\infty}$
on the domain $H^{2}\left(\mathbb{T}^{3}\right)$ (which is the unique
self-adjoint extension of $-\Delta_{|C^{\infty}\left(\mathbb{T}^{3}\right)})$. 

The operator $-\Delta_{\infty}$ has a discrete spectrum; an orthonormal
basis of eigenfunctions for $-\Delta_{\infty}$ consists of the functions
\[
\frac{1}{\sqrt{\mbox{area}\left(\mathbb{T}^3\right)}}e_{\xi}
\]
where
\[
e_{\xi}=\exp\left(i\xi\cdot\left(x-x_{0}\right)\right)
\]
and $\xi$ ranges over the dual lattice
\[
\mathcal{L}=\left\{ \xi\in\mathbb{R}^{3}:\,\xi\cdot l\in\mathbb{Z}\hspace{1em}\forall l\in\mathcal{L}_{0}\right\} =\mathbb{Z}\left(\frac{1}{a},0,0\right)\oplus\mathbb{Z}\left(0,\frac{1}{b},0\right)\oplus\mathbb{Z}\left(0,0,\frac{1}{c}\right).
\]
The corresponding eigenvalues are the norms $\left|\xi\right|^{2}$
of the vectors of the dual lattice $\mathcal{L}$; denote by $\mathcal{N}$
the set of these norms. In the case of the standard torus $\mathcal{L}_{0}=\mathbb{Z}^{3}$
(and then $\mathcal{L}=\mathbb{Z}^{3})$ we have $\mathcal{N}=\mathcal{N}_{3}$,
where $\mathcal{N}_{3}$ is the set of integers which are sums of
three squares, and each eigenvalue is of multiplicity $r_{3}\left(n\right)$
which is the number of representations of $n=a^{2}+b^{2}+c^{2}$ with
$a,b,c\in\mathbb{Z}$ integers.

For the perturbed operator \eqref{eq:ScattererFormal} with $\phi\neq\pi$
we still have the nonzero eigenvalues from the unperturbed problem
$\left(0\neq\lambda\in\sigma\left(-\Delta_{\infty}\right)\right)$,
with multiplicity decreased by one, as well as a new set $\Lambda=\Lambda_{\phi}$
of eigenvalues, each appearing with multiplicity one, which are the
solutions to the equation
\begin{equation}
\sum_{\xi\in\mathcal{L}}\left\{ \frac{1}{\left|\xi\right|^{2}-\lambda}-\frac{\left|\xi\right|^{2}}{\left|\xi\right|^{4}+1}\right\} =c_{0}\tan\frac{\phi}{2}\label{eq:eigenvalues}
\end{equation}
where
\[
c_{0}=\sum_{\xi\in\mathcal{L}}\frac{1}{\left|\xi\right|^{4}+1}
\]
with the corresponding eigenfunctions being multiples of the Green's
function 
\[
G_{\lambda}\left(x,x_{0}\right)=\left(\Delta+\lambda\right)^{-1}\delta_{x_{0}}
\]
which is an element of $\mbox{Dom}\left(-\Delta_{x_{0}}^{*}\right)$
for every $\lambda\notin\sigma\left(-\Delta_{\infty}\right)$, and
has the $L^{2}$-expansion
\[
G_{\lambda}(x,x_{0})=-\frac{1}{8\pi^{3}}\sum\limits _{\xi\in\mathcal{L}}\frac{\exp\left(i\xi\cdot\left(x-x_{0}\right)\right)}{\left|\xi\right|^{2}-\lambda}.
\]
\eqref{eq:eigenvalues} can be written as
\[
\sum_{n\in\mathcal{N}}r_{\mathcal{L}}\left(n\right)\left\{ \frac{1}{n-\lambda}-\frac{n}{n^{2}+1}\right\} =c_{0}\tan\frac{\phi}{2}
\]
where
\[
r_{\mathcal{L}}\left(n\right)=\#\left\{ \xi\in\mathcal{L}:\,\left|\xi\right|^{2}=n\right\} 
\]
is the multiplicity of the norm $n$. The function 
\[
F\left(\lambda\right)=\sum_{n\in\mathcal{N}}r_{\mathcal{L}}\left(n\right)\left\{ \frac{1}{n-\lambda}-\frac{n}{n^{2}+1}\right\} 
\]
is meromorphic with simple poles in $n\in\mathcal{N}$, and $F_{|\mathbb{R}}$
is strictly increasing between the poles, so if we label
\[
\mathcal{N}=\left\{ 0=n_{0}<n_{1}<n_{2}<\dots\right\} 
\]
then the new eigenvalues interlace between the elements of $\mathcal{N}$,
and we may denote the perturbed eigenvalues by $\lambda_{k}=\lambda_{k}^{\phi}$
so that
\[
\lambda_{0}<n_{0}<\lambda_{1}<n_{1}<\lambda_{2}<\dots<n_{k}<\lambda_{k+1}<n_{k+1}<\dots.
\]
We say that a subset $\Lambda'=\left\{ \lambda_{j_{k}}\right\} \subseteq\Lambda$
is of density $a$ ($0\leq a\leq1)$ in $\Lambda$ if 
\[
\lim_{J\to\infty}\frac{1}{J}\#\left\{ k\in\mathbb{N}:\, j_{k}\leq J\right\} =a
\]
or equivalently
\[
\lim_{X\to\infty}\frac{\#\left\{ \lambda\in\Lambda':\,\lambda\leq X\right\} }{\#\left\{ \lambda\in\Lambda:\,\lambda\leq X\right\} }=a.
\]
Denote by 
\[
g_{\lambda}\left(x\right):=\frac{G_{\lambda}\left(x,x_{0}\right)}{\left\|G_{\lambda}\right\|_{2}}
\]
the $L^{2}$-normalized Green's function.

\subsection{Arithmetic Background}

In this section we recall some basic arithmetic facts, that we will
need to use in the proof of Theorem \ref{thm:Main1} for the standard
torus.

By the famous theorem due to Legendre and Gauss (see \cite{Grosswald}
for example), the Diophantine equation
\begin{equation}
x_{1}^{2}+x_{2}^{2}+x_{3}^{2}=n\label{eq:3squares}
\end{equation}
has solutions in integers $x_{i}$ $\left(i=1,2,3\right)$ if and
only if $n$ is not of the form $4^{a}\left(8k+7\right)$ with $a\in\mathbb{Z}$,
$a\geq0$ and $k\in\mathbb{Z}$. Denote by $r_{3}\left(n\right)$
the number of solutions to \eqref{eq:3squares}, then for all $n$,
$r_{3}\left(4^{a}n\right)=r_{3}\left(n\right).$

Equivalently, if we write $n=4^{a}n_{1},$ with $4\nmid n_{1}$, then
$n$ is a sum of three squares if and only if $n_{1}\not\equiv7\,\left(8\right)$,
that is to say
\[
\mathcal{N}_{3}=\left\{ n\in\mathbb{N}:\, n=4^{a}n_{1},\,4\nmid n_{1}\Rightarrow n_{1}\not\equiv7\,\left(8\right)\right\} ,
\]
and $r_{3}\left(n\right)=r_{3}\left(n_{1}\right)$.

The fact that for all $n$: $r_{3}\left(4^{a}n\right)=r_{3}\left(n\right)$,
follows from a simple lemma, that will be in use for us later:
\begin{lem}
\label{lem:4Powers}For every $\xi\in\mathbb{Z}^{3}$ and $a\geq0$,
\[
4^{a}\mid\left|\xi\right|^{2}\Longleftrightarrow\xi=2^{a}\xi_{1}\,\left(\xi_{1}\in\mathbb{Z}^{3}\right).
\]
\end{lem}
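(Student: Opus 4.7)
The plan is a straightforward induction on $a$, with the substance concentrated entirely in an elementary congruence modulo $4$.

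The direction $(\Leftarrow)$ is immediate: if $\xi=2^{a}\xi_{1}$ with $\xi_{1}\in\mathbb{Z}^{3}$, then $\left|\xi\right|^{2}=4^{a}\left|\xi_{1}\right|^{2}$, which is clearly divisible by $4^{a}$.

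For the forward direction, I would induct on $a$. The base case $a=0$ is vacuous, so suppose $a\geq 1$ and the statement holds for $a-1$. The key single-step observation is that if $4\mid\left|\xi\right|^{2}$ then every coordinate of $\xi$ is even. This rests on the elementary fact that modulo $4$ a perfect square is $0$ (when the base is even) or $1$ (when the base is odd). Hence for $\xi=\left(x_{1},x_{2},x_{3}\right)\in\mathbb{Z}^{3}$, the residue $x_{1}^{2}+x_{2}^{2}+x_{3}^{2}\pmod{4}$ equals the number of odd $x_{i}$'s, which lies in $\left\{0,1,2,3\right\}$; the only way for this residue to vanish is if all three $x_{i}$ are even. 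Having shown $2\mid\xi$, I write $\xi=2\eta$ with $\eta\in\mathbb{Z}^{3}$, so that $\left|\eta\right|^{2}=\left|\xi\right|^{2}/4$ is divisible by $4^{a-1}$. By the inductive hypothesis, $\eta=2^{a-1}\xi_{1}$ for some $\xi_{1}\in\mathbb{Z}^{3}$, and therefore $\xi=2^{a}\xi_{1}$, which completes the induction.

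There is no real obstacle here: the entire content is the mod $4$ check in the inductive step, and once this is in hand the conclusion follows by a one-line induction. The significance of the lemma lies not in its proof but in its applications (for instance, it immediately yields $r_{3}\left(4^{a}n\right)=r_{3}\left(n\right)$ via the bijection $\xi\mapsto 2^{-a}\xi$ between representations of $4^{a}n$ and representations of $n$ as sums of three squares).
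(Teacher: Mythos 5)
Your proof is correct and follows essentially the same route as the paper: a one-line verification of the reverse direction, followed by an induction on $a$ whose inductive step rests on the observation that a sum of three squares divisible by $4$ forces all three coordinates to be even (since squares are $\equiv 0,1 \pmod 4$). The only cosmetic difference is that you descend from $a$ to $a-1$ while the paper ascends from $a$ to $a+1$, which is the same induction reindexed.
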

\begin{proof}
If $\xi=2^{a}\xi_{1}$ and $\xi_{1}\in\mathbb{Z}^{3}$, then $\left|\xi\right|^{2}=4^{a}\left|\xi_{1}\right|^{2}$,
so $4^{a}\mid\left|\xi\right|^{2}$.

The other direction is proved by induction on $a$: the case $a=0$
is clear. Assume that the argument is true for $a$, and that $4^{a+1}\mid\left|\xi\right|^{2}$.
Denoting by $\xi=\left(x,y,z\right)$ we get in particular that $x^{2}+y^{2}+z^{2}\equiv0\,\left(4\right)$.
Since clearly $x^{2},y^{2},z^{2}\equiv0,1\,\left(4\right)$, it follows
that necessarily $x^{2},y^{2},z^{2}\equiv0\,\left(4\right)$, so $x,y$
and $z$ are all even. If we write $x=2x_{1},\, y=2y_{1},\, z=2z_{1}$,
and define $\xi_{0}=\left(x_{1},y_{1},z_{1}\right)\in\mathbb{Z}^{3}$,
we get that $4^{a+1}\mid\left|\xi\right|^{2}=\left|2\xi_{0}\right|^{2}=4\left|\xi_{0}\right|^{2}$,
so $4^{a}\mid\left|\xi_{0}\right|^{2}$. From the induction hypothesis
$\xi_{0}=2^{a}\xi_{1}\,\left(\xi_{1}\in\mathbb{Z}^{3}\right)$, and
we get that $\xi=2\xi_{0}=2^{a+1}\xi_{1}$.
\end{proof}
Denote by $R_{3}\left(n\right)$ the number of primitive solutions
to \eqref{eq:3squares}, i.e. the number of solutions such that $\gcd\left(x_{1},x_{2},x_{3}\right)=1$,
then we have
\begin{equation}
r_{3}\left(n\right)=\sum_{d^{2}\mid n}R_{3}\left(\frac{n}{d^{2}}\right).\label{eq:r3n}
\end{equation}
We will need some asymptotic bounds for $r_{3}\left(n\right)$. For
an upper bound, assume that $n$ is a sum of three squares, and as
before, write $n=4^{a}n_{1}$ with $4\nmid n_{1}$, so $n_{1}\not\equiv0,4,7\,\left(8\right)$.
We will use the following theorem of Gauss (see \cite{Grosswald}):
\begin{equation}
R_{3}\left(n\right)=\pi^{-1}G_{n}\sqrt{n}L\left(1,\chi\right)\label{eq:R3n}
\end{equation}
with 
\[
G_{n}=\begin{cases}
0 & n\equiv0,4,7\,\left(8\right)\\
16 & n\equiv3\,\left(8\right)\\
24 & n\equiv1,2,5,6\,\left(8\right)
\end{cases}
\]
where $L\left(1,\chi\right)=\sum\limits _{m=1}^{\infty}\frac{\chi\left(m\right)}{m}$,
and $\chi\left(m\right)=\left(\frac{-4n}{m}\right)$ (the Kronecker
symbol, so $\chi$ is a quadratic character modulo $4n$).

From \eqref{eq:R3n} we have $R_{3}\left(n_{1}\right)\asymp\sqrt{n_{1}}L\left(1,\chi\right)$
(here $\chi\left(m\right)=\left(\frac{-4n_{1}}{m}\right)$). To bound
$L\left(1,\chi\right)$ from above, write
\[
L\left(1,\chi\right)=\sum_{m=1}^{\infty}\frac{\chi\left(m\right)}{m}=\sum_{m=1}^{4n_{1}}\frac{\chi\left(m\right)}{m}+\sum_{4n_{1}+1}^{\infty}\frac{\chi\left(m\right)}{m}.
\]
Clearly 
\[
\left|\sum_{m=1}^{4n_{1}}\frac{\chi\left(m\right)}{m}\right|\leq\sum_{m=1}^{4n_{1}}\frac{1}{m}\ll\log n_{1}
\]
and for the second sum, summation by parts yields
\[
\sum_{4n_{1}+1}^{\infty}\frac{\chi\left(m\right)}{m}\ll\int_{4n_{1}}^{\infty}\frac{s\left(t\right)}{t^{2}}\mbox{d}t
\]
where $s\left(t\right)=\sum\limits _{k\leq t}\chi\left(k\right)$.
But $\left|s\left(t\right)\right|\leq4n_{1}$, so 
\[
\left|\sum_{4n_{1}+1}^{\infty}\frac{\chi\left(m\right)}{m}\right|\ll4n_{1}\int_{4n_{1}}^{\infty}\frac{\mbox{d}t}{t^{2}}=1
\]
and we conclude that $\left|L\left(1,\chi\right)\right|\ll\log n_{1}$,
so $R_{3}\left(n_{1}\right)\ll\sqrt{n_{1}}\log n_{1}$. Note that
if $d^{2}\mid n_{1}$, then $\frac{n_{1}}{d^{2}}\not\equiv0,4,7\left(8\right)$,
so
\[
R_{3}\left(\frac{n_{1}}{d^{2}}\right)\ll\sqrt{\frac{n_{1}}{d}}\log\left(\frac{n_{1}}{d}\right)\leq\sqrt{n_{1}}\log n_{1}
\]
 and using \eqref{eq:r3n} we get that
\begin{align*}
r_{3}\left(n\right)=r_{3}\left(n_{1}\right)&=\sum_{d^{2}\mid n_{1}}R_{3}\left(\frac{n_{1}}{d^{2}}\right)\\
&\ll n_{1}^{1/2}\log n_{1}\sum_{d^{2}\mid n_{1}}1\\
&\leq n_{1}^{1/2}\log n_{1}\sum_{d\mid n_{1}}1\\
&\ll_{\varepsilon}n_{1}^{1/2+\varepsilon}\\
&\leq n^{1/2+\varepsilon}.
\end{align*}
We cannot have a lower bound for $r_{3}\left(n\right)$ for every
$n$, so assume now that 
\[
n\not\equiv0,4,7\,\left(8\right).
\]
Again, from \eqref{eq:R3n} we have
\[
r_{3}\left(n\right)\geq R_{3}\left(n\right)\asymp\sqrt{n}L\left(1,\chi\right)
\]
and by Siegel's theorem \cite{Siegel}: $L\left(1,\chi\right)\gg_{\varepsilon}n^{-\varepsilon}$,
so 
\[
r_{3}\left(n\right)\gg_{\varepsilon}n^{1/2-\varepsilon}.
\]

\section{The Standard Torus}

\subsection{Bounds for the Green's Function and Truncation}

We begin with the proof of Theorem \ref{thm:Main1}, so here $\mathbb{T}^{3}=\mathbb{R}^{3}/2\pi\mathbb{Z}^{3}$,
$\mathcal{L}=\mathbb{Z}^{3}$, and $\mathcal{N}=\mathcal{N}_{3}$.

We first want to give a lower bound for the $L^{2}$-norm of the Green's
function $G_{\lambda}$:
\begin{lem}
\label{lem:RationalNormBound}For every $\lambda\in\Lambda$, we have
\[
\left\|G_{\lambda}\right\|_{2}^{2}\gg\lambda^{1/2-\varepsilon}.
\]
\end{lem}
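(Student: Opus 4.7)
The plan is to apply Parseval's identity to the $L^{2}$-expansion
\[
G_{\lambda}(x,x_{0}) \;=\; -\frac{1}{8\pi^{3}}\sum_{\xi\in\mathbb{Z}^{3}}\frac{\exp(i\xi\cdot(x-x_{0}))}{|\xi|^{2}-\lambda},
\]
using that the $\{e_{\xi}\}$ are orthogonal on $\mathbb{T}^{3}$ with $\|e_{\xi}\|_{2}^{2}=(2\pi)^{3}$. This immediately yields
\[
\|G_{\lambda}\|_{2}^{2} \;=\; \frac{1}{8\pi^{3}}\sum_{\xi\in\mathbb{Z}^{3}}\frac{1}{(|\xi|^{2}-\lambda)^{2}} \;=\; \frac{1}{8\pi^{3}}\sum_{n\in\mathcal{N}_{3}}\frac{r_{3}(n)}{(n-\lambda)^{2}},
\]
so the question reduces to exhibiting a single summand of the required size.

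The strategy will be to locate one integer $n\in\mathcal{N}_{3}$ very close to $\lambda$ on which Siegel's lower bound for $r_{3}$ can be applied. The key point is purely congruential: from the arithmetic description of $\mathcal{N}_{3}$ recalled in Section~\ref{sec:Background}, any integer $n$ with $n\not\equiv 0,4,7\pmod{8}$ automatically satisfies $4\nmid n$ (so $n_{1}=n$ in the notation $n=4^{a}n_{1}$) and $n_{1}\not\equiv 7\pmod{8}$, which simultaneously places $n$ in $\mathcal{N}_{3}$ and makes Siegel's bound $r_{3}(n)\gg_{\varepsilon}n^{1/2-\varepsilon}$ applicable. Since the bad residues $\{0,4,7\}$ leave five classes mod $8$ open, any block of three consecutive integers contains such an $n$.

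I would then pick such an $n$ inside $\{\lceil\lambda\rceil,\lceil\lambda\rceil+1,\lceil\lambda\rceil+2\}$, so that $|n-\lambda|\le 3$ and $n\asymp\lambda$ for $\lambda$ large, and retain only the corresponding term of the Parseval expansion:
\[
\|G_{\lambda}\|_{2}^{2} \;\geq\; \frac{1}{8\pi^{3}}\cdot\frac{r_{3}(n)}{(n-\lambda)^{2}} \;\gg_{\varepsilon}\; \lambda^{1/2-\varepsilon},
\]
which is the claim. I do not anticipate any real obstacle: the Parseval step is routine, and the entire arithmetic input - both the mod $8$ description of $\mathcal{N}_{3}$ and Siegel's lower bound for $r_{3}$ - has already been assembled in the preceding subsection. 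The only thing to be slightly careful about is verifying that the \emph{single} residue condition $n\not\equiv 0,4,7\pmod{8}$ secures both membership in $\mathcal{N}_{3}$ and applicability of Siegel's bound, which is precisely what makes this choice of residue class the correct one.
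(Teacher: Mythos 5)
Your proof is correct and follows essentially the same route as the paper's: both reduce via the Parseval expansion to exhibiting one $n\in\mathcal{N}_3$ with $|n-\lambda|$ bounded and $n\not\equiv 0,4,7\pmod 8$, then invoke Siegel's lower bound $r_3(n)\gg_\varepsilon n^{1/2-\varepsilon}$. The paper takes $n_0\equiv 1\pmod 8$ with $n_0-\lambda\le 10$, while you observe more sharply that three consecutive integers already suffice since the longest run of bad residues mod $8$ is the pair $\{7,0\}$; this is a cosmetic variant of the same argument.
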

\begin{proof}
Note that
\[
\left\|G_{\lambda}\right\|_{2}^{2}\asymp\sum_{\xi\in\mathbb{Z}^{3}}\frac{1}{\left(\left|\xi\right|^{2}-\lambda\right)^{2}}=\sum_{n=0}^{\infty}\frac{r_{3}\left(n\right)}{\left(n-\lambda\right)^{2}}.
\]
Take $n_{0}>\lambda$, $n_{0}\equiv1\,\left(8\right)$, $n_{0}-\lambda\leq10,$
then 
\[
\sum_{n=0}^{\infty}\frac{r_{3}\left(n\right)}{\left(n-\lambda\right)^{2}}\geq\frac{r_{3}\left(n_{0}\right)}{\left(n_{0}-\lambda\right)^{2}}\gg r_{3}\left(n_{0}\right)\gg n_{0}^{1/2-\varepsilon}>\lambda^{1/2-\varepsilon}.\qedhere
\]

\end{proof}
We will now use a truncation procedure.

For $L>0$, denote by
\[
G_{\lambda,L}=-\frac{1}{8\pi^{3}}\sum\limits _{\left|\left|\xi\right|^{2}-\lambda\right|<L}\frac{\exp\left(i\xi\cdot\left(x-x_{0}\right)\right)}{\left|\xi\right|^{2}-\lambda}
\]
the truncated Green's function, and let $g_{\lambda,L}$ be the $L^{2}$-normalized
truncated Green's function:
\[
g_{\lambda,L}=\frac{G_{\lambda,L}}{\left\|G_{\lambda,L}\right\|_{2}}.
\]
We have the following approximation:
\begin{lem}
\label{lem:RationalTruncConv}Let $L=\lambda^{\delta},\,0<\delta<1/4$.
Then $\left\|g_{\lambda}-g_{\lambda,L}\right\|_{2}\to0$
as $\lambda\to\infty$.\end{lem}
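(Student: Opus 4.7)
The plan is to exploit Fourier orthogonality. Since $G_{\lambda,L}$ is by definition the partial sum of $G_\lambda$ over those frequencies $\xi\in\mathbb{Z}^{3}$ with $\left|\left|\xi\right|^{2}-\lambda\right|<L$, the tail $G_\lambda-G_{\lambda,L}$ lives on a disjoint set of Fourier modes, hence by Parseval
\[
\left\|G_\lambda\right\|_{2}^{2} \;=\; \left\|G_{\lambda,L}\right\|_{2}^{2} + \left\|G_\lambda-G_{\lambda,L}\right\|_{2}^{2}.
\]
Since the Fourier coefficients of $G_\lambda$ are all real, $\langle g_\lambda,g_{\lambda,L}\rangle = \left\|G_{\lambda,L}\right\|_{2}/\left\|G_\lambda\right\|_{2}$ is real and positive, so
\[
\left\|g_\lambda-g_{\lambda,L}\right\|_{2}^{2} = 2\left(1-\frac{\left\|G_{\lambda,L}\right\|_{2}}{\left\|G_\lambda\right\|_{2}}\right),
\]
and the lemma reduces to showing that the ratio $\left\|G_\lambda-G_{\lambda,L}\right\|_{2}^{2}/\left\|G_\lambda\right\|_{2}^{2}$ tends to $0$.

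The denominator is already controlled by Lemma~\ref{lem:RationalNormBound}, which gives $\left\|G_\lambda\right\|_{2}^{2}\gg\lambda^{1/2-\varepsilon}$ for every $\varepsilon>0$. It therefore suffices to prove
\[
\sum_{n\in\mathcal{N},\,\left|n-\lambda\right|\geq L}\frac{r_{3}(n)}{(n-\lambda)^{2}} \;=\; o\!\left(\lambda^{1/2-\varepsilon}\right)
\]
for $\varepsilon$ taken arbitrarily small.

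For this step I would insert the pointwise bound $r_{3}(n)\ll_{\varepsilon}n^{1/2+\varepsilon}$ established in Section~\ref{sec:Background} and split the tail into two natural ranges. For $n\leq 2\lambda$ with $\left|n-\lambda\right|\geq L$, use $n^{1/2+\varepsilon}\ll\lambda^{1/2+\varepsilon}$ together with $\sum_{m\geq L}m^{-2}\ll L^{-1}$ to produce a contribution of order $\lambda^{1/2+\varepsilon}L^{-1}$. For $n>2\lambda$, use $n-\lambda\asymp n$ and estimate $\sum_{n>2\lambda}n^{-3/2+\varepsilon}\ll\lambda^{-1/2+\varepsilon}$. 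Substituting $L=\lambda^{\delta}$, the full tail is $\ll\lambda^{1/2+\varepsilon-\delta}$, so the ratio with the denominator is of order $\lambda^{2\varepsilon-\delta}$, which tends to $0$ as soon as $\varepsilon<\delta/2$.

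There is no real obstacle. The only mildly delicate point is that the lower bound from Lemma~\ref{lem:RationalNormBound} loses an arbitrary power $\lambda^{\varepsilon}$, so the tail estimate must beat $\lambda^{1/2-\varepsilon}$ with a definite power saving; this is precisely what the factor $L^{-1}$ from the truncation provides. In particular any positive $\delta$ would suffice for this lemma, so the stronger restriction $\delta<1/4$ in the hypothesis is presumably reserved for later parts of the argument, where the Fourier modes inside the truncation window have to be analysed more tightly.
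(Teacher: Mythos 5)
Your proposal is correct and lands on the same core estimate as the paper, with two cosmetic variations. The paper reduces $\left\|g_{\lambda}-g_{\lambda,L}\right\|_{2}$ to $2\left\|G_{\lambda}-G_{\lambda,L}\right\|_{2}/\left\|G_{\lambda}\right\|_{2}$ via an elementary triangle-inequality manipulation (adding and subtracting $G_{\lambda}/\left\|G_{\lambda,L}\right\|_{2}$), whereas you use Pythagoras/Parseval to obtain the exact identity $\left\|g_{\lambda}-g_{\lambda,L}\right\|_{2}^{2}=2\left(1-\left\|G_{\lambda,L}\right\|_{2}/\left\|G_{\lambda}\right\|_{2}\right)$; both reduce the problem to showing $\left\|G_{\lambda}-G_{\lambda,L}\right\|_{2}^{2}/\left\|G_{\lambda}\right\|_{2}^{2}\to0$. (Incidentally, you do not need the Fourier coefficients of $G_\lambda$ to be real for this: since $G_{\lambda,L}$ is the orthogonal projection of $G_\lambda$ onto the span of the truncated modes, $\left\langle G_\lambda,G_{\lambda,L}\right\rangle=\left\|G_{\lambda,L}\right\|_{2}^{2}$ is automatically real and nonnegative.) For the tail, the paper passes to an integral comparison and a change of variable $y=x/\lambda$ to get $\ll\lambda^{-\delta+1/2+\varepsilon}$ directly; you split into the ranges $n\leq2\lambda$ and $n>2\lambda$, using $\sum_{m\geq L}m^{-2}\ll L^{-1}$ in the first and $n-\lambda\asymp n$ in the second. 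Both routes produce the same bound, and the conclusion $\lambda^{-\delta+2\varepsilon}\to0$ is identical. Your final remark is also correct: any $\delta>0$ suffices for this particular lemma, and the restriction $\delta<1/4$ is only invoked later (for example in the estimates around Lemma~\ref{lem:IntSphericalStripsEst} inside Proposition~\ref{prop:MainProp}).
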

\begin{proof}
Clearly
\begin{align}
\left\|g_{\lambda}-g_{\lambda,L}\right\|_{2}&=\left\|\frac{G_{\lambda}}{\left\|G_{\lambda}\right\|_{2}}-\frac{G_{\lambda,L}}{\left\|G_{\lambda,L}\right\|_{2}}\right\|_{2}\label{eq:NormalizedDiffsEst}\\
&\leq\frac{\left\|G_{\lambda}-G_{\lambda,L}\right\|_{2}}{\left\|G_{\lambda}\right\|_{2}}+\left\|G_{\lambda,L}\right\|_{2}\left|\frac{1}{\left\|G_{\lambda}\right\|_{2}}-\frac{1}{\left\|G_{\lambda,L}\right\|_{2}}\right|\nonumber\\
&\leq2\frac{\left\|G_{\lambda}-G_{\lambda,L}\right\|_{2}}{\left\|G_{\lambda}\right\|_{2}}.\nonumber 
\end{align}
We have
\begin{align*}
\left\|G_{\lambda}-G_{\lambda,L}\right\|_{2}^{2}\asymp\sum_{\left|n-\lambda\right|\geq\lambda^{\delta}}\frac{r_{3}\left(n\right)}{\left(n-\lambda\right)^{2}}&\ll\sum_{\left|n-\lambda\right|\geq\lambda^{\delta}}\frac{n^{1/2+\varepsilon}}{\left(n-\lambda\right)^{2}}\\
&\ll\int_{\begin{subarray}{c}
\left|x-\lambda\right|\geq\frac{1}{2}\lambda^{\delta}\\
x\geq0
\end{subarray}}\frac{x^{1/2+\varepsilon}\mbox{d}x}{\left(x-\lambda\right)^{2}}\\
&=\lambda^{-1/2+\varepsilon}\int_{\begin{subarray}{c}
\left|y-1\right|\geq\frac{1}{2}\lambda^{-1+\delta}\\
y\geq0
\end{subarray}}\frac{y^{1/2+\varepsilon}\mbox{d}y}{\left(y-1\right)^{2}}\\
&\ll\lambda^{-\delta+1/2+\varepsilon}.
\end{align*}
Using Lemma \ref{lem:RationalNormBound} we conclude that 
\begin{equation}
\frac{\left\|G_{\lambda}-G_{\lambda,L}\right\|_{2}^{2}}{\left\|G_{\lambda}\right\|_{2}^{2}}\ll\frac{\lambda^{-\delta+1/2+\varepsilon}}{\lambda^{1/2-\varepsilon}}=\lambda^{-\delta+2\varepsilon}\label{eq:RationalTruncDiff}
\end{equation}
which tends to zero (for $\varepsilon>0$ small enough) since $\delta>0$.
\end{proof}
We conclude that the $L^{2}$-norm of the truncated Green's function
$G_{\lambda,L}$ is asymptotically equivalent to the $L^{2}$-norm
of the non-truncated function $G_{\lambda}$:
\begin{lem}
\label{lem:RationalNormEquiv}Let $L=\lambda^{\delta},\,0<\delta<1/4$.
Then 
\[
\left\|G_{\lambda,L}\right\|_{2}=\left\|G_{\lambda}\right\|_{2}\left(1+o\left(1\right)\right).
\]
\end{lem}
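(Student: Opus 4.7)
The plan is to derive this essentially for free from the estimates produced in the proof of Lemma \ref{lem:RationalTruncConv}. The key input is the bound
\[
\frac{\|G_{\lambda}-G_{\lambda,L}\|_{2}}{\|G_{\lambda}\|_{2}}\ll\lambda^{-\delta/2+\varepsilon}\longrightarrow 0,
\]
which follows from squaring \eqref{eq:RationalTruncDiff} taking square roots (for $\varepsilon$ sufficiently small, since $\delta>0$).

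First I would apply the reverse triangle inequality in $L^{2}$:
\[
\bigl|\|G_{\lambda,L}\|_{2}-\|G_{\lambda}\|_{2}\bigr|\le\|G_{\lambda}-G_{\lambda,L}\|_{2}.
\]
Dividing through by $\|G_{\lambda}\|_{2}$ (which is positive by Lemma \ref{lem:RationalNormBound}) gives
\[
\left|\frac{\|G_{\lambda,L}\|_{2}}{\|G_{\lambda}\|_{2}}-1\right|\le\frac{\|G_{\lambda}-G_{\lambda,L}\|_{2}}{\|G_{\lambda}\|_{2}},
\]
and the right-hand side tends to zero by the estimate recalled above. Rearranging this statement is exactly the claim $\|G_{\lambda,L}\|_{2}=\|G_{\lambda}\|_{2}(1+o(1))$.

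There is really no substantive obstacle here: the lemma is a repackaging of the computation already performed to establish Lemma \ref{lem:RationalTruncConv}. The only point one must be mildly careful with is that Lemma \ref{lem:RationalTruncConv} was phrased in terms of the normalized functions $g_{\lambda},g_{\lambda,L}$, so one should not cite it as a black box; instead the relevant input is the intermediate unnormalized bound \eqref{eq:RationalTruncDiff} appearing inside that proof, combined with the lower bound $\|G_{\lambda}\|_{2}^{2}\gg\lambda^{1/2-\varepsilon}$ from Lemma \ref{lem:RationalNormBound}.
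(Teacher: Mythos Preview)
Your proof is correct and essentially identical to the paper's: the paper also applies the reverse triangle inequality, divides by $\|G_{\lambda}\|_{2}$, and invokes \eqref{eq:RationalTruncDiff} to conclude. (Minor wording slip: you mean taking square roots in \eqref{eq:RationalTruncDiff}, not squaring it.)
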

\begin{proof}
This follows from \eqref{eq:RationalTruncDiff}, since 
\[
\frac{\left|\left\|G_{\lambda,L}\right\|_{2}-\left\|G_{\lambda}\right\|_{2}\right|}{\left\|G_{\lambda}\right\|_{2}}\leq\frac{\left\|G_{\lambda}-G_{\lambda,L}\right\|_{2}}{\left\|G_{\lambda}\right\|_{2}}\to0
\]
as $\lambda\to\infty$.
\end{proof}
We turn to prove the next approximation:
\begin{lem}
\label{lem:RationalSuffConv}Let $L=\lambda^{\delta},\,0<\delta<1/4$.
For every $f\in C^{\infty}\left(\mathbb{T}^{3}\right)$, we have 
\[
\left|\left\langle fg_{\lambda},g_{\lambda}\right\rangle -\left\langle fg_{\lambda,L},g_{\lambda,L}\right\rangle \right|\to0
\]
as $\lambda\to\infty$, so
\[
\left\langle fg_{\lambda,L},g_{\lambda,L}\right\rangle \to0\,\Rightarrow\left\langle fg_{\lambda},g_{\lambda}\right\rangle \to0
\]
as $\lambda\to\infty$.\end{lem}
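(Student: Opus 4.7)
The plan is a routine approximation argument based on bilinearity of the inner product and Cauchy--Schwarz, using Lemma \ref{lem:RationalTruncConv} as the input.

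First I would write the difference in a telescoping form that isolates $g_\lambda - g_{\lambda,L}$ in each summand:
\[
\langle fg_\lambda,g_\lambda\rangle-\langle fg_{\lambda,L},g_{\lambda,L}\rangle
=\langle f(g_\lambda-g_{\lambda,L}),g_\lambda\rangle+\langle fg_{\lambda,L},g_\lambda-g_{\lambda,L}\rangle.
\]
Since $f\in C^\infty(\mathbb{T}^3)$ is bounded and both $g_\lambda$ and $g_{\lambda,L}$ are $L^2$-normalized, Cauchy--Schwarz together with $\|fh\|_2\le\|f\|_\infty\|h\|_2$ gives
\[
\bigl|\langle fg_\lambda,g_\lambda\rangle-\langle fg_{\lambda,L},g_{\lambda,L}\rangle\bigr|\le 2\|f\|_\infty\,\|g_\lambda-g_{\lambda,L}\|_2.
\]

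Next I would invoke Lemma \ref{lem:RationalTruncConv}, which states exactly that $\|g_\lambda-g_{\lambda,L}\|_2\to 0$ as $\lambda\to\infty$ when $L=\lambda^\delta$ with $0<\delta<1/4$. Combined with the bound above, this yields $|\langle fg_\lambda,g_\lambda\rangle-\langle fg_{\lambda,L},g_{\lambda,L}\rangle|\to 0$.

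The second assertion of the lemma (the implication $\langle fg_{\lambda,L},g_{\lambda,L}\rangle\to 0 \Rightarrow \langle fg_\lambda,g_\lambda\rangle\to 0$) then follows immediately by the triangle inequality. There is no real obstacle here; the only point to watch is that the $L^2$-normalization makes both $\|g_\lambda\|_2$ and $\|g_{\lambda,L}\|_2$ equal to $1$, so that the Cauchy--Schwarz bounds do not pick up any dependence on $\lambda$, and all the analytical work has already been absorbed into the previous two lemmas.
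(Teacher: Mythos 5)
Your proof is correct and follows essentially the same route as the paper: a telescoping decomposition of the difference of inner products, Cauchy--Schwarz with $\|fh\|_2 \le \|f\|_\infty \|h\|_2$, and an appeal to Lemma \ref{lem:RationalTruncConv}. The only cosmetic difference is which factor of each inner product carries the difference $g_\lambda - g_{\lambda,L}$, which does not affect the argument.
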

\begin{proof}
Let $f\in C^{\infty}\left(\mathbb{T}^{3}\right).$ We have
\[
\left|\left\langle fg_{\lambda},g_{\lambda}\right\rangle -\left\langle fg_{\lambda,L},g_{\lambda,L}\right\rangle \right|\leq\left|\left\langle fg_{\lambda},g_{\lambda}-g_{\lambda,L}\right\rangle \right|+\left|\left\langle f\left(g_{\lambda}-g_{\lambda,L}\right),g_{\lambda,L}\right\rangle \right|.
\]
The Cauchy-Schwarz inequality implies that
\[
\left|\left\langle fg_{\lambda},g_{\lambda}-g_{\lambda,L}\right\rangle \right|\leq\left\|fg_{\lambda}\right\|_{2}\left\|g_{\lambda}-g_{\lambda,L}\right\|_{2}\leq\left\|f\right\|_{\infty}\left\|g_{\lambda}-g_{\lambda,L}\right\|_{2}.
\]
From the same reason 
\[
\left|\left\langle f\left(g_{\lambda}-g_{\lambda,L}\right),g_{\lambda,L}\right\rangle \right|\leq\left\|f\left(g_{\lambda}-g_{\lambda,L}\right)\right\|_{2}\le\left\|f\right\|_{\infty}\left\|g_{\lambda}-g_{\lambda,L}\right\|_{2},
\]
but by Lemma \ref{lem:RationalTruncConv} we know that 
\[
\left\|g_{\lambda}-g_{\lambda,L}\right\|_{2}\to0
\]
so
\[
\left|\left\langle fg_{\lambda},g_{\lambda}\right\rangle -\left\langle fg_{\lambda,L},g_{\lambda,L}\right\rangle \right|\to0
\]
as $\lambda\to\infty$. It follows that if we have $\left\langle fg_{\lambda,L},g_{\lambda,L}\right\rangle \to0$,
then 
\[
\left|\left\langle fg_{\lambda},g_{\lambda}\right\rangle \right|\leq\left|\left\langle fg_{\lambda,L},g_{\lambda,L}\right\rangle \right|+\left|\left\langle fg_{\lambda},g_{\lambda}\right\rangle -\left\langle fg_{\lambda,L},g_{\lambda,L}\right\rangle \right|\to0.
\]
so
\[
\left\langle fg_{\lambda},g_{\lambda}\right\rangle \to0
\]
as $\lambda\to\infty$.
\end{proof}

\subsection{Powers of 4}

We want to divide the elements of $\mathcal{N}_{3}$ into two kinds:
those which are divisible by a high power of $4$, and those which
are not.

Fix $0\neq\zeta\in\mathbb{Z}^{3}$, and write $\left|\zeta\right|^{2}=n_{\zeta}=4^{a_{\zeta}}n_{1}^{\zeta}$,
with $4\nmid n_{1}^{\zeta}$. 

We make the following definition:
\begin{defn}
Define 
\[
\mathcal{N}_{0}^{\zeta}=\left\{ n\in\mathcal{N}_{3}:\, n=4^{a}n_{1},\,4\nmid n_{1}\,\Rightarrow a>a_{\zeta}\right\} ,
\]
the set of elements which are divisible by a high power of $4$, and
define 
\[
\mathcal{N}_{1}^{\zeta}=\left\{ n\in\mathcal{N}_{3}:\, n=4^{a}n_{1},\,4\nmid n_{1}\,\Rightarrow a\leq a_{\zeta}\right\} ,
\]
the complement set in $\mathcal{N}_{3}$.
\end{defn}
The following observation will be useful:
\begin{lem}
\label{lem:RationalLow4Powers}For every $\xi\in\mathbb{Z}_{3}$,
if $2\left\langle \xi,\zeta\right\rangle =\left|\zeta\right|^{2}$,
then $\left|\xi\right|^{2}\in\mathcal{N}_{1}^{\zeta}$.\end{lem}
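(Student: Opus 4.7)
The plan is to argue by contradiction (equivalently, by contrapositive): suppose $|\xi|^{2}\in\mathcal{N}_{0}^{\zeta}$, so that $|\xi|^{2}=4^{a}n_{1}$ with $4\nmid n_{1}$ and $a\geq a_{\zeta}+1$, and deduce that $2\langle\xi,\zeta\rangle\neq|\zeta|^{2}$. The key tool is Lemma \ref{lem:4Powers}, which lets me translate ``highly divisible by a power of $4$'' into ``divisible (as a vector) by a power of $2$.''

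First I would apply Lemma \ref{lem:4Powers} to both $\zeta$ and $\xi$. Since $4^{a_{\zeta}}\mid|\zeta|^{2}$ by definition of $a_{\zeta}$, I can write $\zeta=2^{a_{\zeta}}\zeta_{1}$ with $\zeta_{1}\in\mathbb{Z}^{3}$; note $|\zeta_{1}|^{2}=n_{1}^{\zeta}$ is not divisible by $4$. Under the assumption $|\xi|^{2}\in\mathcal{N}_{0}^{\zeta}$, we have $4^{a_{\zeta}+1}\mid|\xi|^{2}$, so Lemma \ref{lem:4Powers} gives $\xi=2^{a_{\zeta}+1}\xi_{1}$ for some $\xi_{1}\in\mathbb{Z}^{3}$.

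Next, I would substitute these factorizations into $2\langle\xi,\zeta\rangle$:
\[
2\langle\xi,\zeta\rangle=2\cdot 2^{a_{\zeta}+1}\cdot 2^{a_{\zeta}}\langle\xi_{1},\zeta_{1}\rangle=4^{a_{\zeta}+1}\langle\xi_{1},\zeta_{1}\rangle,
\]
so that $4^{a_{\zeta}+1}\mid 2\langle\xi,\zeta\rangle$. On the other hand, $|\zeta|^{2}=4^{a_{\zeta}}n_{1}^{\zeta}$ with $4\nmid n_{1}^{\zeta}$, so $4^{a_{\zeta}+1}\nmid|\zeta|^{2}$. Comparing $2$-adic valuations forces $2\langle\xi,\zeta\rangle\neq|\zeta|^{2}$, which is the desired contradiction.

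There is really no obstacle here beyond bookkeeping of the powers of $2$; the statement is essentially a direct consequence of Lemma \ref{lem:4Powers} applied simultaneously to $\zeta$ and to $\xi$. The only point to be careful about is that the hypothesis $4\nmid n_{1}^{\zeta}$ (which comes from the definition of $a_{\zeta}$ as the exact $4$-adic valuation of $|\zeta|^{2}$) is precisely what prevents $|\zeta|^{2}$ from being divisible by $4^{a_{\zeta}+1}$, and thus from matching the divisibility of $2\langle\xi,\zeta\rangle$.
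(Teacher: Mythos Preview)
Your proof is correct and follows essentially the same approach as the paper: both apply Lemma~\ref{lem:4Powers} to $\xi$ and $\zeta$, then compare $2$-adic valuations of the two sides of $2\langle\xi,\zeta\rangle=|\zeta|^{2}$. The only cosmetic difference is that the paper argues directly (assuming the hypothesis and deducing $a\le a_{\zeta}$), whereas you phrase it as a contrapositive and extract only the factor $2^{a_{\zeta}+1}$ from $\xi$ rather than the full $2^{a}$; either way the argument is the same.
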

\begin{proof}
Let $\xi\in\mathbb{Z}^{3}$ such that $2\left\langle \xi,\zeta\right\rangle =\left|\zeta\right|^{2}$
, and write $\left|\xi\right|^{2}=n=4^{a}n_{1},$ with $4\nmid n_{1}$.
By Lemma \ref{lem:4Powers}, $\xi=2^{a}\xi_{1}$, with $\left|\xi_{1}\right|^{2}=n_{1}$,
and $\zeta=2^{a_{\zeta}}\zeta_{1}$, with $\left|\zeta_{1}\right|^{2}=n_{1}^{\zeta}$.
Therefore we get that 
\[
2^{a+a_{\zeta}+1}\left\langle \xi_{1},\zeta_{1}\right\rangle =\left|\zeta\right|^{2}=4^{a_{\zeta}}n_{1}^{\zeta}
\]
so
\[
2^{a-a_{\zeta}+1}\left\langle \xi_{1},\zeta_{1}\right\rangle =n_{1}^{\zeta}
\]
and since $4\nmid n_{1}^{\zeta},$ we get that $a-a_{\zeta}+1\leq1,$
so $a\leq a_{\zeta}$, and $\left|\xi\right|^{2}\in\mathcal{N}_{1}^{\zeta}$.\end{proof}
\begin{cor}
\label{cor:RationalHigh4Powers}For every $\xi\in\mathbb{Z}_{3}$,
if $\left|\xi\right|^{2}\in\mathcal{N}_{0}^{\zeta}$, then $\left|2\left\langle \xi,\zeta\right\rangle -\left|\zeta\right|^{2}\right|\geq1$.\end{cor}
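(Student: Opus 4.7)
The plan is to deduce the corollary from Lemma \ref{lem:RationalLow4Powers} by contrapositive, exploiting integrality. The key observation is that the quantity $2\langle\xi,\zeta\rangle - |\zeta|^2$ is automatically an integer whenever $\xi,\zeta \in \mathbb{Z}^3$, since both the inner product $\langle\xi,\zeta\rangle$ and the squared norm $|\zeta|^2$ lie in $\mathbb{Z}$. Consequently, the statement $|2\langle\xi,\zeta\rangle - |\zeta|^2| \geq 1$ is equivalent to $2\langle\xi,\zeta\rangle - |\zeta|^2 \neq 0$, i.e.\ to $2\langle\xi,\zeta\rangle \neq |\zeta|^2$.

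So I would argue as follows. Assume for contradiction that the conclusion fails, meaning $|2\langle\xi,\zeta\rangle - |\zeta|^2| < 1$. By the integrality remark above, this forces $2\langle\xi,\zeta\rangle = |\zeta|^2$. Now Lemma \ref{lem:RationalLow4Powers} applies directly and yields $|\xi|^2 \in \mathcal{N}_1^\zeta$. But by hypothesis $|\xi|^2 \in \mathcal{N}_0^\zeta$, and $\mathcal{N}_0^\zeta$ and $\mathcal{N}_1^\zeta$ are disjoint (they partition $\mathcal{N}_3$ according to whether the power of $4$ in the $4$-free part exceeds $a_\zeta$ or not), yielding the contradiction.

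There is no real obstacle here; the corollary is essentially an immediate packaging of Lemma \ref{lem:RationalLow4Powers} together with the trivial but crucial fact that nonzero integers have absolute value at least $1$. The only thing worth checking explicitly in the write-up is that $|\xi|^2$ indeed belongs to $\mathcal{N}_3$ (so that the dichotomy between $\mathcal{N}_0^\zeta$ and $\mathcal{N}_1^\zeta$ applies), which is immediate since $|\xi|^2$ is by definition a sum of three integer squares.
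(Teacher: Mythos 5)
Your proof is correct and takes essentially the same approach as the paper: use the contrapositive of Lemma \ref{lem:RationalLow4Powers} to conclude $2\langle\xi,\zeta\rangle-|\zeta|^{2}\neq 0$, then observe this quantity is an integer so its absolute value is at least $1$. The reasoning is identical to the paper's, just laid out slightly more verbosely.
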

\begin{proof}
Let $\xi\in\mathbb{Z}^{3}$ such that $\left|\xi\right|^{2}\in\mathcal{N}_{0}^{\zeta}$.
From Lemma \ref{lem:RationalLow4Powers} we have $2\left\langle \xi,\zeta\right\rangle -\left|\zeta\right|^{2}\neq0$,
and since $2\left\langle \xi,\zeta\right\rangle -\left|\zeta\right|^{2}$
is an integer, we get that $\left|2\left\langle \xi,\zeta\right\rangle -\left|\zeta\right|^{2}\right|\geq1.$
\end{proof}
For every $\lambda\in\Lambda$, define $n_{\lambda}$ to be the element
of $\mathcal{N}_{3}$ which is closest to $\lambda$ (if there are
two elements with the same distance from $\lambda$, take the smallest
of them). Note that since the elements of $ \Lambda $ interlace between the elements of $ \mathcal{N}_3 $, and since for every $n\not\equiv 0,4,7\,\left(8\right)$ we have $ n\in\mathcal{N}_3 $, we conclude that for every $ \lambda\in\Lambda $ we have $ \left|n_\lambda-\lambda\right|\le 1.5 $, and in particular $ n_\lambda \sim \lambda $.

We conclude this section with the following lemma:
\begin{lem}
\label{lem:Rational4PowersDistance}Assume that $n_{\lambda}\in\mathcal{N}_{0}^{\zeta}.$
Then for every $\xi\in\mathbb{Z}^{3}$:
\[
\left|\left|\xi\right|^{2}-\lambda\right|<\frac{1}{2}\,\Longrightarrow\left|\left|\xi-\zeta\right|^{2}-\lambda\right|>\frac{1}{2}.
\]
 \end{lem}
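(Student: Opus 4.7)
The plan is to turn $||\xi-\zeta|^2 - \lambda|$ into the sum of two pieces, one of which is small and the other of which is forced by $\mathcal{N}_0^\zeta$-hypothesis to be bounded below by $1$.

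First I would expand
\[
|\xi-\zeta|^{2} - \lambda \;=\; \bigl(|\xi|^{2}-\lambda\bigr) \;+\; \bigl(|\zeta|^{2} - 2\langle\xi,\zeta\rangle\bigr),
\]
so the strategy is to use the hypothesis $||\xi|^2-\lambda|<1/2$ to control the first summand, while showing the second summand is a nonzero integer (hence $\geq 1$ in absolute value) whenever $n_\lambda \in \mathcal{N}_0^\zeta$; a triangle inequality will then finish the job.

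Next I would argue that the hypothesis $||\xi|^{2}-\lambda|<1/2$ forces $|\xi|^{2}=n_{\lambda}$. Indeed $|\xi|^{2}\in\mathcal{N}_{3}$, and by definition $n_{\lambda}$ is the element of $\mathcal{N}_{3}$ closest to $\lambda$, so $|n_{\lambda}-\lambda|\le ||\xi|^{2}-\lambda|<1/2$. Both $|\xi|^{2}$ and $n_{\lambda}$ are integers within distance $1/2$ of $\lambda$, hence $|n_{\lambda}-|\xi|^{2}|<1$, and so $|\xi|^{2}=n_{\lambda}$. In particular $|\xi|^{2}\in\mathcal{N}_{0}^{\zeta}$, so Corollary \ref{cor:RationalHigh4Powers} applies and yields
\[
\bigl|\,2\langle\xi,\zeta\rangle - |\zeta|^{2}\,\bigr| \;\ge\; 1.
\]

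Combining the two estimates via the reverse triangle inequality,
\[
\bigl|\,|\xi-\zeta|^{2}-\lambda\,\bigr| \;\ge\; \bigl|\,|\zeta|^{2} - 2\langle\xi,\zeta\rangle\,\bigr| \;-\; \bigl|\,|\xi|^{2}-\lambda\,\bigr| \;>\; 1 - \tfrac{1}{2} \;=\; \tfrac{1}{2},
\]
which is exactly the claim. There is essentially no obstacle: the content is entirely in Lemma \ref{lem:RationalLow4Powers} and its corollary, which have already been proved; the present lemma is just the geometric packaging that turns that integer separation into a separation of $|\xi-\zeta|^{2}$ from $\lambda$.
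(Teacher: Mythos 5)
Your proof is correct and essentially identical to the paper's: same expansion of $|\xi-\zeta|^2-\lambda$, same use of Corollary \ref{cor:RationalHigh4Powers}, same reverse triangle inequality. You merely spell out the step the paper labels as "clear" — that $||\xi|^2-\lambda|<1/2$ forces $|\xi|^2=n_\lambda$ — which is a reasonable thing to make explicit.
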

\begin{proof}
Let $\xi\in\mathbb{Z}^{3}$, and assume that $\left|\left|\xi\right|^{2}-\lambda\right|<\frac{1}{2}$.\\
It clearly follows that $\left|\xi\right|^{2}=n_{\lambda}\in\mathcal{N}_{0}^{\zeta}.$
By Corollary \ref{cor:RationalHigh4Powers} we get that:
\begin{align*}
\left|\left|\xi-\zeta\right|^{2}-\lambda\right|&=\left|\left|\xi\right|^{2}-\lambda-2\left\langle \xi,\zeta\right\rangle +\left|\zeta\right|^{2}\right|\\
&\geq\left|2\left\langle \xi,\zeta\right\rangle -\left|\zeta\right|^{2}\right|-\left|\left|\xi\right|^{2}-\lambda\right|>\frac{1}{2}.\qedhere
\end{align*}

\end{proof}

\subsection{Proof of Theorem \ref{thm:Main1}}

We are now in condition to prove Theorem \ref{thm:Main1}. We will
need an estimate for the number of integral points inside some strips
on three-dimensional spheres:
\begin{lem*}
Let $L=\lambda^{\delta},\,0<\delta<1/4$. For every $0\neq\zeta\in\mathbb{Z}^{3},C_{1},C_{2}$
and $n$ such that $\left|n-\lambda\right|<C_{1}L$, we have 
\[
\#\left\{ \eta\in\mathbb{Z}^{3}:\,\left|\eta\right|^{2}=n,\,\left|\left\langle \eta,\zeta\right\rangle \right|<C_{2}L\right\} \ll_{C_{1},C_{2},\zeta,\varepsilon}Ln^{\varepsilon}.
\]

\end{lem*}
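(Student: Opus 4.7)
The plan is to slice the sphere $\{\eta \in \mathbb{Z}^{3} : |\eta|^{2} = n\}$ by level sets of the linear functional $\eta \mapsto \langle \eta, \zeta \rangle$. Since both $\eta$ and $\zeta$ lie in $\mathbb{Z}^{3}$, the inner product $m := \langle \eta, \zeta \rangle$ is an integer, and the constraint $|m| < C_{2}L$ permits only $O_{C_{2}}(L)$ values of $m$. I aim to bound the number of $\eta$ on each slice uniformly in $m$ and $n$, with the only $n$-dependence being a factor $n^{\varepsilon}$, and then sum over the $O(L)$ admissible $m$.

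For a fixed admissible $m$, the linear equation $\langle \eta, \zeta \rangle = m$ either has no integer solution (in which case the slice is empty) or its integer solution set is a rank-two affine lattice $\eta_{m} + \Gamma_{\zeta}$, where $\Gamma_{\zeta} := \{v \in \mathbb{Z}^{3} : \langle v, \zeta \rangle = 0\}$ depends only on $\zeta$. Fix an integral basis $b_{1}, b_{2}$ of $\Gamma_{\zeta}$ and parameterize $\eta = \eta_{m} + x_{1} b_{1} + x_{2} b_{2}$. Substituting into $|\eta|^{2} = n$ produces an inhomogeneous binary quadratic equation in $(x_{1}, x_{2}) \in \mathbb{Z}^{2}$ whose principal part is the positive definite form $Q_{\zeta}(x_{1}, x_{2}) := |x_{1} b_{1} + x_{2} b_{2}|^{2}$ (which depends only on $\zeta$), and whose linear and constant terms depend on $m$, $n$, and $\eta_{m}$. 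Completing the square in real coefficients recasts the equation as $Q_{\zeta}(x - x_{0}) = N$ for some $x_{0} \in \mathbb{Q}^{2}$ whose denominator is bounded purely in terms of $\zeta$, and some real $N$ of size $\ll n$.

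Counting the resulting lattice points on a two-dimensional ellipse then reduces, after clearing denominators, to bounding the number of representations of an integer of size $\ll n$ by a fixed positive definite integral binary quadratic form, possibly restricted to a sublattice of bounded index. The classical divisor bound for such representation counts gives $O_{\zeta, \varepsilon}(n^{\varepsilon})$. Summing over the $O(L)$ permissible values of $m$ yields the claimed estimate $\ll_{C_{1}, C_{2}, \zeta, \varepsilon} L n^{\varepsilon}$. The main technical step I anticipate is the uniform (in $m$) application of the divisor bound in the shifted setting; this is a standard reduction, but one must carefully track the denominators of $x_{0}$ so that the implied constant depends only on $\zeta$ and $\varepsilon$ rather than on $m$ or $n$. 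Note that the hypothesis $|n - \lambda| < C_{1}L$ plays no role in the estimation itself and enters only through the admissibility of $n$.
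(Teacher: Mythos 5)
Your proposal is correct and follows essentially the same strategy as the paper: slice the sphere by the integer values $m=\langle\eta,\zeta\rangle$ (at most $O(L)$ of them), and on each slice reduce to counting representations of an integer $\ll n$ by a fixed positive-definite binary quadratic form, bounded by $O_{\zeta,\varepsilon}(n^{\varepsilon})$ via the divisor bound. The paper implements the reduction by explicitly eliminating $z$ and completing squares three times to land on $x^{2}+Dy^{2}=k$ (and then proves the divisor bound via unique factorization in $\mathbb{Q}(\sqrt{-D})$), whereas you parameterize the slice by an integral basis of the orthogonal lattice $\Gamma_{\zeta}$; this is a more structural phrasing of the same computation, and both correctly track that the implied constant depends only on $\zeta$ and $\varepsilon$, not on $m$ or $n$.
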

This is Lemma \ref{lem:IntSphericalStripsEst} in the Appendix, see
there for a proof.

The following main proposition will easily imply Theorem \ref{thm:Main1}:
\begin{prop}
\label{prop:MainProp}For every $0\neq\zeta\in\mathbb{Z}^{3}$, we
have
\[
\left\langle e_{\zeta}g_{\lambda},g_{\lambda}\right\rangle \to0
\]
as $\lambda\to\infty$.\end{prop}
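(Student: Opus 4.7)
The plan is to expand the matrix element in the Fourier basis and bound the resulting double sum using the arithmetic separation of Lemma~\ref{lem:Rational4PowersDistance} together with the spherical strip lemma.

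By Lemma~\ref{lem:RationalSuffConv} it is enough to show that $\left\langle e_{\zeta}g_{\lambda,L},g_{\lambda,L}\right\rangle \to 0$ for $L=\lambda^{\delta}$ with $0<\delta<1/4$. Orthogonality of the exponentials $\left\{ e_{\xi}\right\}$ gives
\[
\left\langle e_{\zeta}g_{\lambda,L},g_{\lambda,L}\right\rangle =\frac{C}{\left\|G_{\lambda,L}\right\|_{2}^{2}}\sum_{\xi\in A_{\zeta}\left(\lambda,L\right)}\frac{1}{\left(\left|\xi\right|^{2}-\lambda\right)\left(\left|\xi+\zeta\right|^{2}-\lambda\right)},
\]
where $A_{\zeta}\left(\lambda,L\right)=\left\{ \xi\in\mathbb{Z}^{3}:\,\left|\left|\xi\right|^{2}-\lambda\right|<L,\,\left|\left|\xi+\zeta\right|^{2}-\lambda\right|<L\right\}$, and Lemmas~\ref{lem:RationalNormBound} and~\ref{lem:RationalNormEquiv} yield $\left\|G_{\lambda,L}\right\|_{2}^{2}\gg\lambda^{1/2-\varepsilon}$; so it suffices to bound the numerator by $o\left(\lambda^{1/2-\varepsilon}\right)$.

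The crucial structural observation is that on $A_{\zeta}\left(\lambda,L\right)$ the integer $D\left(\xi\right):=2\left\langle \xi,\zeta\right\rangle +\left|\zeta\right|^{2}=\left|\xi+\zeta\right|^{2}-\left|\xi\right|^{2}$ satisfies $\left|D\left(\xi\right)\right|<2L$, so $\xi$ is confined to a slab of thickness $O\left(L\right)$ about the hyperplane $\left\langle \xi,\zeta\right\rangle =-\left|\zeta\right|^{2}/2$. I would partition the sum according to the integer value of $D\left(\xi\right)$. The case $D\left(\xi\right)=0$ forces $\left|\xi+\zeta\right|^{2}=\left|\xi\right|^{2}$, which by Lemma~\ref{lem:RationalLow4Powers} requires $\left|\xi\right|^{2}\in\mathcal{N}_{1}^{\zeta}$, and these terms collapse to $1/\left(\left|\xi\right|^{2}-\lambda\right)^{2}$. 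For $D\left(\xi\right)\neq0$ I would use the partial fraction identity
\[
\frac{1}{\left(\left|\xi\right|^{2}-\lambda\right)\left(\left|\xi+\zeta\right|^{2}-\lambda\right)}=\frac{1}{D\left(\xi\right)}\left(\frac{1}{\left|\xi\right|^{2}-\lambda}-\frac{1}{\left|\xi+\zeta\right|^{2}-\lambda}\right),
\]
which resolves the double singularity into single-denominator sums. In either case the spherical strip lemma bounds the number of $\xi\in\mathbb{Z}^{3}$ with $\left|\xi\right|^{2}=n$ on a fixed hyperplane $D\left(\xi\right)=D_{0}$ by $\ll n^{\varepsilon}$, and summing over $\left|D_{0}\right|<2L$ and $\left|n-\lambda\right|<L$ produces, away from the central shell $n=n_{\lambda}$, a contribution of order $\lambda^{\varepsilon}\log L$, comfortably smaller than $\lambda^{1/2-\varepsilon}$ once $\delta<1/4$.

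The main obstacle is the central shell $\left|\xi\right|^{2}=n_{\lambda}$ in the regime where $\left|n_{\lambda}-\lambda\right|$ is very small -- exactly the situation motivating the partition of $\mathcal{N}_{3}$ made in Section~3.2. When $n_{\lambda}\in\mathcal{N}_{0}^{\zeta}$, Lemma~\ref{lem:Rational4PowersDistance} (applied to both $\zeta$ and $-\zeta$, which is legitimate because $\mathcal{N}_{0}^{\zeta}=\mathcal{N}_{0}^{-\zeta}$) guarantees that whenever $\left|n_{\lambda}-\lambda\right|<1/2$, we have $\left|\left|\xi+\zeta\right|^{2}-\lambda\right|>1/2$ on the central shell, so the product of denominators is controlled. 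When $n_{\lambda}\in\mathcal{N}_{1}^{\zeta}$ we write $n_{\lambda}=4^{a}n_{1}$ with $a\leq a_{\zeta}$ bounded, so $n_{1}\asymp_{\zeta}n_{\lambda}$ and Siegel's theorem yields the improved lower bound $r_{3}\left(n_{\lambda}\right)=r_{3}\left(n_{1}\right)\gg_{\zeta}n_{\lambda}^{1/2-\varepsilon}$, hence $\left\|G_{\lambda}\right\|_{2}^{2}\geq r_{3}\left(n_{\lambda}\right)/\left(n_{\lambda}-\lambda\right)^{2}\gg_{\zeta}n_{\lambda}^{1/2-\varepsilon}/\left(n_{\lambda}-\lambda\right)^{2}$; this is strong enough to absorb the potentially dangerous central contribution of order $n_{\lambda}^{\varepsilon}/\left(n_{\lambda}-\lambda\right)^{2}$ arising from the $D\left(\xi\right)=0$ terms on the central shell. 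Combining the estimates in both cases yields a vanishing ratio and completes the proof.
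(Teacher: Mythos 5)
Your proposal follows the same structural template as the paper's proof: truncate via Lemma~\ref{lem:RationalSuffConv}, reduce to a shell sum, invoke the spherical strip lemma, and split on whether $n_{\lambda}$ lies in $\mathcal{N}_{0}^{\zeta}$ or $\mathcal{N}_{1}^{\zeta}$. The partial-fraction identity in $D(\xi)$ is a pleasant reorganization but is not where the difficulty lies. There is a genuine gap at the point where you handle the central shell.

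The terms you have \emph{not} actually bounded are those with $\left|\xi\right|^{2}=n_{\lambda}$ but $D(\xi)\neq0$, so $\left|\xi+\zeta\right|^{2}=n_{\lambda}+D(\xi)\neq n_{\lambda}$ (and symmetrically $\left|\xi+\zeta\right|^{2}=n_{\lambda}$, $\left|\xi\right|^{2}\neq n_{\lambda}$). For these, Lemma~\ref{lem:Rational4PowersDistance} only shows $\left|\left|\xi+\zeta\right|^{2}-\lambda\right|>\tfrac12$, so the product of denominators is \emph{bounded below} by $\tfrac12\left|n_{\lambda}-\lambda\right|$, which can be arbitrarily small; "controlled" is not a quantitative estimate here. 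Summing over $D_{0}\neq0$ and using the strip lemma gives a contribution of size $\asymp n_{\lambda}^{\varepsilon}\log L/\left|n_{\lambda}-\lambda\right|$. After dividing by $\left\|G_{\lambda}\right\|_{2}^{2}$, the crude lower bound $\left\|G_{\lambda}\right\|_{2}^{2}\gg\lambda^{1/2-\varepsilon}$ of Lemma~\ref{lem:RationalNormBound} is not enough when $\left|n_{\lambda}-\lambda\right|\ll\lambda^{-1/2}$; and in the case $n_{\lambda}\in\mathcal{N}_{0}^{\zeta}$ you do not have the improved bound $\left\|G_{\lambda}\right\|_{2}^{2}\gg n_{\lambda}^{1/2-\varepsilon}/\left(n_{\lambda}-\lambda\right)^{2}$ that rescues the $\mathcal{N}_{1}^{\zeta}$ case, because $r_{3}(n_{\lambda})$ can be small when $n_{\lambda}$ is highly $4$-divisible.

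The missing idea is Cauchy--Schwarz applied to the single-denominator sum, which is exactly what the paper does in its estimates of $\sum^{4}$, $\sum^{5}$ and $\sum^{8}$: one writes
\[
\sum_{\xi}\frac{1}{\left|\left|\xi\right|^{2}-\lambda\right|}\le\Bigl(\sum_{\xi}\frac{1}{\left|\left|\xi\right|^{2}-\lambda\right|^{2}}\Bigr)^{1/2}\Bigl(\sum_{\xi}1\Bigr)^{1/2}\ll\left\|G_{\lambda,L}\right\|_{2}\bigl(L\lambda^{\varepsilon}\bigr)^{1/2},
\]
borrowing one factor of $\left\|G_{\lambda,L}\right\|_{2}$ to cancel against the normalization, leaving $(L\lambda^{\varepsilon})^{1/2}/\left\|G_{\lambda}\right\|_{2}\ll\lambda^{\delta/2-1/4+\varepsilon}$, which vanishes precisely because $\delta<1/4$. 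Equivalently, you may use $\left\|G_{\lambda}\right\|_{2}^{2}\ge\max\bigl(\lambda^{1/2-\varepsilon},\,r_{3}(n_{\lambda})/(n_{\lambda}-\lambda)^{2}\bigr)\gg\lambda^{1/4-\varepsilon/2}\sqrt{r_{3}(n_{\lambda})}/\left|n_{\lambda}-\lambda\right|$ and cancel the $\left|n_{\lambda}-\lambda\right|^{-1}$ that way. Without this step the proposal does not close, particularly in the $\mathcal{N}_{0}^{\zeta}$ case; once it is inserted (also for the symmetric shell $\left|\xi+\zeta\right|^{2}=n_{\lambda}$), your argument becomes a correct variant of the paper's.
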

\begin{proof}
Let $L=\lambda^{\delta},\,0<\delta<1/4$. By Lemma \ref{lem:RationalSuffConv},
it suffices to show that 
\[
\left\langle e_{\zeta}g_{\lambda,L},g_{\lambda,L}\right\rangle \to0
\]
as $\lambda\to\infty$. Note that
\[
\left\langle e_{\zeta}G_{\lambda,L},G_{\lambda,L}\right\rangle \asymp\sum\limits _{\left|\left|\xi\right|^{2}-\lambda\right|<L}\frac{1}{\left(\left|\xi-\zeta\right|^{2}-\lambda\right)\left(\left|\xi\right|^{2}-\lambda\right)}
\]
and therefore
\begin{align*}
\left|\left\langle e_{\zeta}G_{\lambda,L},G_{\lambda,L}\right\rangle \right|&\ll\sum\limits _{\left|\left|\xi\right|^{2}-\lambda\right|<L}\frac{1}{\left|\left|\xi-\zeta\right|^{2}-\lambda\right|\left|\left|\xi\right|^{2}-\lambda\right|}\\
&=\sum\nolimits ^{1}\frac{1}{\left|\left|\xi-\zeta\right|^{2}-\lambda\right|\left|\left|\xi\right|^{2}-\lambda\right|}\\
&+\sum\nolimits ^{2}\frac{1}{\left|\left|\xi-\zeta\right|^{2}-\lambda\right|\left|\left|\xi\right|^{2}-\lambda\right|}
\end{align*}
where in $\sum^{1}$ the summation is over $\xi\in\mathbb{Z}^{3}$
such that: 
\[
\left|\left|\xi\right|^{2}-\lambda\right|<L,\,\left|\left\langle \xi,\zeta\right\rangle \right|\geq L,
\]
and in $\sum^{2}$ the summation is over $\xi\in\mathbb{Z}^{3}$ such
that: 
\[
\left|\left|\xi\right|^{2}-\lambda\right|<L,\,\left|\left\langle \xi,\zeta\right\rangle \right|<L.
\]
Note that
\begin{align*}
\left|\left|\xi-\zeta\right|^{2}-\lambda\right|&=\left|\left|\xi\right|^{2}-\lambda-2\left\langle \xi,\zeta\right\rangle +\left|\zeta\right|^{2}\right|\\
&\geq2\left|\left\langle \xi,\zeta\right\rangle \right|-\left|\left|\xi\right|^{2}-\lambda\right|-\left|\zeta\right|^{2}
\end{align*}
so if $\left|\left|\xi\right|^{2}-\lambda\right|<L,\,\left|\left\langle \xi,\zeta\right\rangle \right|\geq L$,
then 
\[
\left|\left|\xi-\zeta\right|^{2}-\lambda\right|\geq2L-L-\left|\zeta\right|^{2}\gg L,
\]
and hence
\begin{align*}
\sum\nolimits ^{1}\frac{1}{\left|\left|\xi-\zeta\right|^{2}-\lambda\right|\left|\left|\xi\right|^{2}-\lambda\right|}&\ll\frac{1}{L}\sum\nolimits ^{1}\frac{1}{\left|\left|\xi\right|^{2}-\lambda\right|}\\
&\leq\frac{1}{L}\sum\limits _{\left|\left|\xi\right|^{2}-\lambda\right|<L}\frac{1}{\left|\left|\xi\right|^{2}-\lambda\right|}.
\end{align*}
Cauchy-Schwarz gives
\begin{align*}
\sum\limits _{\left|\left|\xi\right|^{2}-\lambda\right|<L}\frac{1}{\left|\left|\xi\right|^{2}-\lambda\right|}&\ll\left\|G_{\lambda,L}\right\|_{2}\left(\sum_{\left|\left|\xi\right|^{2}-\lambda\right|<L}1\right)^{1/2}\\
&=\left\|G_{\lambda,L}\right\|_{2}\left(\sum_{\left|n-\lambda\right|<L}r_{3}\left(n\right)\right)^{1/2}\\
&\ll\left\|G_{\lambda,L}\right\|_{2}\left(\sum_{\left|n-\lambda\right|<L}n^{1/2+\varepsilon}\right)^{1/2}\\
&\ll\left\|G_{\lambda,L}\right\|_{2}L^{1/2}\lambda^{1/4+\varepsilon/2}
\end{align*}
and therefore
\begin{align*}
\sum\nolimits ^{1}\frac{1}{\left|\left|\xi-\zeta\right|^{2}-\lambda\right|\left|\left|\xi\right|^{2}-\lambda\right|}&\ll\frac{\left\|G_{\lambda,L}\right\|_{2}\lambda^{1/4+\varepsilon/2}}{L^{1/2}}\\
&=\left\|G_{\lambda,L}\right\|_{2}\lambda^{-\delta/2+1/4+\varepsilon/2}\\
&\asymp\left\|G_{\lambda}\right\|_{2}\lambda^{-\delta/2+1/4+\varepsilon/2}.
\end{align*}
For the estimation of $\sum^{2}$, remember that we defined $n_{\lambda}$
to be the element of $\mathcal{N}_{3}$ which is closest to $\lambda$
(and if there are two elements with the same distance from $\lambda$,
we take $n_{\lambda}$ to be the smallest of them). We distinguish
between two cases: whether $n_{\lambda}\in\mathcal{N}_{0}^{\zeta}$
or $n_{\lambda}\in\mathcal{N}_{1}^{\zeta}$.

First, assume that $n_{\lambda}\in\mathcal{N}_{0}^{\zeta}$. By Lemma
\ref{lem:Rational4PowersDistance}, for every $\xi\in\mathbb{Z}^{3}$,
\[
\left|\left|\xi\right|^{2}-\lambda\right|<\frac{1}{2}\,\Longrightarrow\left|\left|\xi-\zeta\right|^{2}-\lambda\right|>\frac{1}{2}.
\]
Hence we can write 
\begin{align*}
\sum\nolimits ^{2}\frac{1}{\left|\left|\xi-\zeta\right|^{2}-\lambda\right|\left|\left|\xi\right|^{2}-\lambda\right|}&=\sum\nolimits ^{3}\frac{1}{\left|\left|\xi-\zeta\right|^{2}-\lambda\right|\left|\left|\xi\right|^{2}-\lambda\right|}\\
&+\sum\nolimits ^{4}\frac{1}{\left|\left|\xi-\zeta\right|^{2}-\lambda\right|\left|\left|\xi\right|^{2}-\lambda\right|}\\
&+\sum\nolimits ^{5}\frac{1}{\left|\left|\xi-\zeta\right|^{2}-\lambda\right|\left|\left|\xi\right|^{2}-\lambda\right|}
\end{align*}
where in $\sum^{3}$ the summation is over $\xi\in\mathbb{Z}^{3}$
such that: 
\[
\frac{1}{2}\leq\left|\left|\xi\right|^{2}-\lambda\right|<L,\,\left|\left\langle \xi,\zeta\right\rangle \right|<L,\,\left|\left|\xi-\zeta\right|^{2}-\lambda\right|\geq\frac{1}{2},
\]
in $\sum^{4}$ the summation is over $\xi\in\mathbb{Z}^{3}$ such
that: 
\[
\left|\left|\xi\right|^{2}-\lambda\right|<\frac{1}{2},\,\left|\left\langle \xi,\zeta\right\rangle \right|<L,\,\left|\left|\xi-\zeta\right|^{2}-\lambda\right|>\frac{1}{2},
\]
and in $\sum^{5}$ the summation is over $\xi\in\mathbb{Z}^{3}$ such
that: 
\[
\frac{1}{2}\leq\left|\left|\xi\right|^{2}-\lambda\right|<L,\,\left|\left\langle \xi,\zeta\right\rangle \right|<L,\,\left|\left|\xi-\zeta\right|^{2}-\lambda\right|<\frac{1}{2}.
\]
Using Lemma \ref{lem:IntSphericalStripsEst} we have
\[
\sum\nolimits ^{3}\frac{1}{\left|\left|\xi-\zeta\right|^{2}-\lambda\right|\left|\left|\xi\right|^{2}-\lambda\right|}\ll\sum\nolimits ^{3}1\ll\sum_{\left|n-\lambda\right|<L}Ln^{\varepsilon}\ll L^{2}\lambda^{\varepsilon}=\lambda^{2\delta+\varepsilon}.
\]
For the second sum, we get by Cauchy-Schwarz and Lemma \ref{lem:IntSphericalStripsEst}
\begin{align*}
\sum\nolimits ^{4}\frac{1}{\left|\left|\xi-\zeta\right|^{2}-\lambda\right|\left|\left|\xi\right|^{2}-\lambda\right|}&\ll\sum\nolimits ^{4}\frac{1}{\left|\left|\xi\right|^{2}-\lambda\right|}\\
&\ll\left(\sum\nolimits ^{4}\frac{1}{\left|\left|\xi\right|^{2}-\lambda\right|^{2}}\right)^{1/2}\left(\sum\nolimits ^{4}1\right)^{1/2}\\
&\ll\left\|G_{\lambda,L}\right\|_{2}\left(\sum\nolimits ^{4}1\right)^{1/2}\\
&\ll\left\|G_{\lambda,L}\right\|_{2}\left(Ln_{\lambda}^{\varepsilon}\right)^{1/2}\\
&\ll\left\|G_{\lambda,L}\right\|_{2}L^{1/2}\lambda^{\varepsilon/2}\\
&=\left\|G_{\lambda,L}\right\|_{2}\lambda^{\delta/2+\varepsilon/2}.
\end{align*}
For the third sum we note that for $\left|\left|\xi\right|^{2}-\lambda\right|<L,\,\left|\left\langle \xi,\zeta\right\rangle \right|<L$
(for every large enough $L$) we have:
\begin{align*}
\left|\left|\xi-\zeta\right|^{2}-\lambda\right|&=\left|\left|\xi\right|^{2}-\lambda-2\left\langle \xi,\zeta\right\rangle +\left|\zeta\right|^{2}\right|\\
&\leq\left|\left|\xi\right|^{2}-\lambda\right|+2\left|\left\langle \xi,\zeta\right\rangle \right|+\left|\zeta\right|^{2}\\
&<3L+\left|\zeta\right|^{2}\leq4L
\end{align*}
and
\[
\left|\left\langle \xi-\zeta,\zeta\right\rangle \right|=\left|\left\langle \xi,\zeta\right\rangle -\left|\zeta\right|^{2}\right|\leq\left|\left\langle \xi,\zeta\right\rangle \right|+\left|\zeta\right|^{2}<2L,
\]
so 
\[
\sum\nolimits ^{2}\frac{1}{\left|\left|\xi-\zeta\right|^{2}-\lambda\right|^{2}}\leq\sum\nolimits ^{6}\frac{1}{\left|\left|\eta\right|^{2}-\lambda\right|^{2}}
\]
where in $\sum^{6}$ the summation is over $\eta\in\mathbb{Z}^{3}$
such that 
\[
\left|\left|\eta\right|^{2}-\lambda\right|<4L,\,\left|\left\langle \eta,\zeta\right\rangle \right|<2L.
\]
We get that
\begin{align*}
\sum\nolimits ^{5}\frac{1}{\left|\left|\xi-\zeta\right|^{2}-\lambda\right|\left|\left|\xi\right|^{2}-\lambda\right|}&\ll\sum\nolimits ^{5}\frac{1}{\left|\left|\xi-\zeta\right|^{2}-\lambda\right|}\\
&\ll\left(\sum\nolimits ^{5}\frac{1}{\left|\left|\xi-\zeta\right|^{2}-\lambda\right|^{2}}\right)^{1/2}\left(\sum\nolimits ^{5}1\right)^{1/2}\\
&\ll\left(\sum\nolimits ^{2}\frac{1}{\left|\left|\xi-\zeta\right|^{2}-\lambda\right|^{2}}\right)^{1/2}\left(\sum\nolimits ^{5}1\right)^{1/2}\\
&\ll\left(\sum\nolimits ^{6}\frac{1}{\left|\left|\eta\right|^{2}-\lambda\right|^{2}}\right)^{1/2}\left(\sum\nolimits ^{5}1\right)^{1/2}\\
&\ll\left\|G_{\lambda}\right\|_{2}\left(\sum\nolimits ^{5}1\right)^{1/2}
\end{align*}
but since $\left|\left\langle \xi,\zeta\right\rangle \right|<L$ implies
that $\left|\left\langle \xi-\zeta,\zeta\right\rangle \right|<2L$,
and since $\left|\left|\xi-\zeta\right|^{2}-\lambda\right|<1/2$ implies
that $\left|\xi-\zeta\right|^{2}=n_{\lambda}$, Lemma \ref{lem:IntSphericalStripsEst}
yields 
\[
\sum\nolimits ^{5}1\ll\sum_{\substack{\left|\eta\right|^{2}=n_{\lambda}\\
\left|\left\langle \eta,\zeta\right\rangle \right|<2L
}
}1\ll Ln_{\lambda}^{\varepsilon}\ll L\lambda^{\varepsilon}=\lambda^{\delta+\varepsilon},
\]
so
\[
\sum\nolimits ^{5}\frac{1}{\left|\left|\xi-\zeta\right|^{2}-\lambda\right|\left|\left|\xi\right|^{2}-\lambda\right|}\ll\left\|G_{\lambda}\right\|_{2}\lambda^{\delta/2+\varepsilon/2}.
\]
We conclude that
\begin{align*}
\sum\nolimits ^{2}\frac{1}{\left|\left|\xi-\zeta\right|^{2}-\lambda\right|\left|\left|\xi\right|^{2}-\lambda\right|}&\ll\lambda^{2\delta+\varepsilon}+\left\|G_{\lambda,L}\right\|_{2}\lambda^{\delta/2+\varepsilon/2}\\
&+\left\|G_{\lambda}\right\|_{2}\lambda^{\delta/2+\varepsilon/2}\\
&\asymp\lambda^{2\delta+\varepsilon}+2\left\|G_{\lambda}\right\|_{2}\lambda^{\delta/2+\varepsilon/2}.
\end{align*}
Now, assume that $n_{\lambda}\in\mathcal{N}_{1}^{\zeta}$. Cauchy-Schwarz
yields

\begin{align*}
&\sum\nolimits ^{2}\frac{1}{\left|\left|\xi-\zeta\right|^{2}-\lambda\right|\left|\left|\xi\right|^{2}-\lambda\right|}\\
&\ll\left(\sum\nolimits ^{2}\frac{1}{\left|\left|\xi\right|^{2}-\lambda\right|^{2}}\right)^{1/2}\left(\sum\nolimits ^{2}\frac{1}{\left|\left|\xi-\zeta\right|^{2}-\lambda\right|^{2}}\right)^{1/2}\\
&\ll\left\|G_{\lambda,L}\right\|_{2}\left(\sum\nolimits ^{2}\frac{1}{\left|\left|\xi-\zeta\right|^{2}-\lambda\right|^{2}}\right)^{1/2}\\
&\ll\left\|G_{\lambda,L}\right\|_{2}\left(\sum\nolimits ^{6}\frac{1}{\left|\left|\eta\right|^{2}-\lambda\right|^{2}}\right)^{1/2}\\
&=\left\|G_{\lambda,L}\right\|_{2}\left(\sum\nolimits ^{7}\frac{1}{\left|\left|\eta\right|^{2}-\lambda\right|^{2}}+\sum\nolimits ^{8}\frac{1}{\left|\left|\eta\right|^{2}-\lambda\right|^{2}}\right)^{1/2}
\end{align*}
where in $\sum^{7}$ the summation is over $\eta\in\mathbb{Z}^{3}$
such that: 
\[
\left|\left|\eta\right|^{2}-\lambda\right|<4L,\,\left|\left\langle \eta,\zeta\right\rangle \right|<2L,\,\left|\eta\right|^{2}\neq n_{\lambda},
\]
and in $\sum^{8}$ the summation is over $\eta\in\mathbb{Z}^{3}$
such that: 
\[
\left|\left|\eta\right|^{2}-\lambda\right|<4L,\,\left|\left\langle \eta,\zeta\right\rangle \right|<2L,\,\left|\eta\right|^{2}=n_{\lambda}.
\]
For $\left|\eta\right|^{2}\neq n_{\lambda},$ we clearly have $\left|\left|\eta\right|^{2}-\lambda\right|\geq\frac{1}{2}$,
so using Lemma \ref{lem:IntSphericalStripsEst}, we get
\[
\sum\nolimits ^{7}\frac{1}{\left|\left|\eta\right|^{2}-\lambda\right|^{2}}\ll\sum\nolimits ^{7}1\ll\sum\limits _{\left|n-\lambda\right|<4L}Ln^{\varepsilon}\ll\lambda^{2\delta+\varepsilon}.
\]
For the last sum, we use Lemma \ref{lem:IntSphericalStripsEst} again
to get
\[
\frac{\sum^{8}\frac{1}{\left|\left|\eta\right|^{2}-\lambda\right|^{2}}}{\left\|G_{\lambda}\right\|_{2}^{2}}\ll\frac{\frac{Ln_{\lambda}^{\varepsilon}}{\left(n_{\lambda}-\lambda\right)^{2}}}{\left\|G_{\lambda}\right\|_{2}^{2}}\ll\frac{\frac{L\lambda^{\varepsilon}}{\left(n_{\lambda}-\lambda\right)^{2}}}{\left\|G_{\lambda}\right\|_{2}^{2}}\asymp\frac{\frac{L\lambda^{\varepsilon}}{\left(n_{\lambda}-\lambda\right)^{2}}}{\sum\limits _{n=0}^{\infty}\frac{r_{3}\left(n\right)}{\left(n-\lambda\right)^{2}}};
\]
$\forall n_{0}$ we have $\sum\limits _{n=0}^{\infty}\frac{r_{3}\left(n\right)}{\left(n-\lambda\right)^{2}}\geq\frac{r_{3}\left(n_{0}\right)}{\left(n_{0}-\lambda\right)^{2}}$,
so
\[
\frac{\frac{L\lambda^{\varepsilon}}{\left(n_{\lambda}-\lambda\right)^{2}}}{\sum\limits _{n=0}^{\infty}\frac{r_{3}\left(n\right)}{\left(n-\lambda\right)^{2}}}\leq\frac{L\lambda^{\varepsilon}}{r_{3}\left(n_{\lambda}\right)}.
\]
If we write $n_{\lambda}=4^{a}n_{1},$ with $4\nmid n_{1}$, then
since $n_{\lambda}\in\mathcal{N}_{1}^{\zeta}$ we know that $n_{\lambda}\leq4^{a_{\zeta}}n_{1}$,
so 
\begin{align*}
\frac{L\lambda^{\varepsilon}}{r_{3}\left(n_{\lambda}\right)}=\frac{L\lambda^{\varepsilon}}{r_{3}\left(n_{1}\right)}&\ll\frac{L\lambda^{\varepsilon}}{n_{1}^{1/2-\varepsilon}}\\
&\ll\frac{L\lambda^{\varepsilon}}{n_{\lambda}^{1/2-\varepsilon}}
\\&\ll\frac{L\lambda^{\varepsilon}}{\lambda^{1/2-\varepsilon}}\\
&=L\lambda^{-1/2+2\varepsilon}=\lambda^{\delta-1/2+2\varepsilon}
\end{align*}
and therefore
\[
\sum\nolimits ^{8}\frac{1}{\left|\left|\eta\right|^{2}-\lambda\right|^{2}}\ll\left\|G_{\lambda}\right\|_{2}^{2}\lambda^{\delta-1/2+2\varepsilon},
\]
so now we conclude that
\begin{align*}
\sum\nolimits ^{2}\frac{1}{\left|\left|\xi-\zeta\right|^{2}-\lambda\right|\left|\left|\xi\right|^{2}-\lambda\right|}&\ll\left\|G_{\lambda,L}\right\|_{2}\left(\lambda^{2\delta+\varepsilon}+\left\|G_{\lambda}\right\|_{2}^{2}\lambda^{\delta-1/2+2\varepsilon}\right)^{1/2}\\
&\leq\left\|G_{\lambda,L}\right\|_{2}\left(\lambda^{\delta+\varepsilon/2}+\left\|G_{\lambda}\right\|_{2}\lambda^{\delta/2-1/4+\varepsilon}\right)\\
&\asymp\left\|G_{\lambda}\right\|_{2}\left(\lambda^{\delta+\varepsilon/2}+\left\|G_{\lambda}\right\|_{2}\lambda^{\delta/2-1/4+\varepsilon}\right).
\end{align*}
Either way we got that
\begin{align*}
\sum\nolimits ^{2}\frac{1}{\left|\left|\xi-\zeta\right|^{2}-\lambda\right|\left|\left|\xi\right|^{2}-\lambda\right|}&\ll\lambda^{2\delta+\varepsilon}+2\left\|G_{\lambda}\right\|\lambda^{\delta/2+\varepsilon/2}\\
&+\left\|G_{\lambda}\right\|_{2}\lambda^{\delta+\varepsilon/2}+\left\|G_{\lambda}\right\|_{2}^{2}\lambda^{\delta/2-1/4+\varepsilon},
\end{align*}
so we have
\begin{align*}
\left|\left\langle e_{\zeta}g_{\lambda,L},g_{\lambda,L}\right\rangle \right|&=\frac{\left|\left\langle e_{\zeta}G_{\lambda,L},G_{\lambda,L}\right\rangle \right|}{\left\|G_{\lambda,L}\right\|_{2}^{2}}\\
&\ll\frac{\left|\left\langle e_{\zeta}G_{\lambda,L},G_{\lambda,L}\right\rangle \right|}{\left\|G_{\lambda}\right\|_{2}^{2}}\\
&\ll\frac{\lambda^{2\delta+\varepsilon}}{\left\|G_{\lambda}\right\|_{2}^{2}}+\frac{\lambda^{-\delta/2+1/4+\varepsilon/2}+2\lambda^{\delta/2+\varepsilon/2}+\lambda^{\delta+\varepsilon/2}}{\left\|G_{\lambda}\right\|_{2}}\\
&+\lambda^{\delta/2-1/4+\varepsilon}\\
&\ll\frac{\lambda^{2\delta+\varepsilon}}{\lambda^{1/2-\varepsilon}}+\frac{\lambda^{-\delta/2+1/4+\varepsilon/2}+2\lambda^{\delta/2+\varepsilon/2}+\lambda^{\delta+\varepsilon/2}}{\lambda^{1/4-\varepsilon/2}}\\
&+\lambda^{\delta/2-1/4+\varepsilon}\\
&=\lambda^{2\delta-1/2+2\varepsilon}+\lambda^{-\delta/2+\varepsilon}+3\lambda^{\delta/2-1/4+\varepsilon}+\lambda^{\delta-1/4+\varepsilon},
\end{align*}
and since $0<\delta<1/4$, the proposition follows.
\end{proof}
Theorem \ref{thm:Main1} now easily follows by the density of trigonometric
polynomials in $C^{\infty}\left(\mathbb{T}^{3}\right)$ in the uniform
norm:
\begin{thm*}
For every $a\in C^{\infty}\left(\mathbb{T}^{3}\right)$, we have 
\[
\int_{\mathbb{T}^{3}}a\left(x\right)\left|g_{\lambda}\left(x\right)\right|^{2}\mbox{d}x\to\frac{1}{\mbox{area}\left(\mathbb{T}^{3}\right)}\int_{\mathbb{T}^{3}}a\left(x\right)\mbox{d}x
\]
 as $\lambda\to\infty$ along $\Lambda$.\end{thm*}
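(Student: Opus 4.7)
The plan is to deduce the theorem from Proposition \ref{prop:MainProp} by the standard approximation argument, exploiting the fact that trigonometric polynomials are dense in $C^\infty(\mathbb{T}^3)$ in the uniform norm (Stone--Weierstrass, or Fej\'er-type approximation).

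First I would verify the statement directly for an exponential $e_\zeta(x) = \exp(i\zeta\cdot(x-x_0))$ with $\zeta \in \mathbb{Z}^3$. For $\zeta = 0$, we have $e_0 \equiv 1$ and $\langle e_0 g_\lambda, g_\lambda\rangle = \|g_\lambda\|_2^2 = 1$, which equals $\frac{1}{\operatorname{area}(\mathbb{T}^3)}\int_{\mathbb{T}^3} e_0(x)\,dx$. For $\zeta \neq 0$, Proposition \ref{prop:MainProp} gives $\langle e_\zeta g_\lambda, g_\lambda\rangle \to 0$, while $\int_{\mathbb{T}^3} e_\zeta(x)\,dx = 0$, so the conclusion holds on the pure exponential. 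By linearity it therefore holds for any trigonometric polynomial $p(x) = \sum_{\zeta} \hat p(\zeta)\, e_\zeta(x)$ (a finite sum), since both sides depend linearly on $p$.

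Next I would pass from trigonometric polynomials to general $a \in C^\infty(\mathbb{T}^3)$ by an $\varepsilon/3$ argument. Given $\varepsilon > 0$, choose a trigonometric polynomial $p$ with $\|a-p\|_\infty < \varepsilon$. Then, since $\|g_\lambda\|_2 = 1$, Cauchy--Schwarz yields
\[
\bigl|\langle a\, g_\lambda, g_\lambda\rangle - \langle p\, g_\lambda, g_\lambda\rangle\bigr|
\;\leq\; \|a-p\|_\infty \,\|g_\lambda\|_2^2 \;<\; \varepsilon,
\]
and similarly
\[
\left|\frac{1}{\operatorname{area}(\mathbb{T}^3)}\int_{\mathbb{T}^3}(a-p)(x)\,dx\right| \;\leq\; \|a-p\|_\infty \;<\; \varepsilon.
\]
By the previous step, for $\lambda$ large enough along $\Lambda$,
\[
\left|\langle p\, g_\lambda, g_\lambda\rangle - \frac{1}{\operatorname{area}(\mathbb{T}^3)}\int_{\mathbb{T}^3} p(x)\,dx\right| < \varepsilon.
\]
Combining the three estimates via the triangle inequality shows that the difference between $\langle a g_\lambda, g_\lambda\rangle$ and $\frac{1}{\operatorname{area}(\mathbb{T}^3)}\int a$ is eventually bounded by $3\varepsilon$, giving the desired convergence as $\lambda \to \infty$ along $\Lambda$.

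There is no real obstacle at this stage: the delicate arithmetic and analytic work has already been absorbed into Proposition \ref{prop:MainProp}, and what remains is a soft functional-analytic wrap-up. The only mild subtlety is the uniform density of trigonometric polynomials in $C^\infty(\mathbb{T}^3)$ (which is what allows the $\varepsilon/3$ estimate with $\|\cdot\|_\infty$); this is a standard fact on the torus. The footnote in the theorem statement extending the result to Riemann integrable observables would follow from the same scheme by sandwiching such an $a$ between two continuous functions whose integrals differ by less than $\varepsilon$.
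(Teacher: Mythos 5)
Your proposal is correct and follows essentially the same route as the paper: convergence on exponentials via Proposition \ref{prop:MainProp} (with the $\zeta=0$ case trivial since $\|g_\lambda\|_2=1$), extension to trigonometric polynomials by linearity, and then the standard $\varepsilon/3$ approximation argument using uniform density of trigonometric polynomials in $C^\infty(\mathbb{T}^3)$.
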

\begin{proof}
Let $P\left(x\right)=\sum\limits _{\left|\zeta\right|\leq J}p_{\zeta}e_{\zeta}\left(x\right)$
be a trigonometric polynomial. From Proposition \ref{prop:MainProp}
we have
\[
\left\langle Pg_{\lambda},g_{\lambda}\right\rangle =\sum_{\left|\zeta\right|\leq J}p_{\zeta}\left\langle e_{\zeta}g_{\lambda},g_{\lambda}\right\rangle \to p_{\left(0,0,0\right)}=\frac{1}{\mbox{area}\left(\mathbb{T}^{3}\right)}\int_{\mathbb{T}^{3}}P\left(x\right)\mbox{d}x
\]
as $\lambda\to\infty$.

Let $\varepsilon>0$. For every $a\in C^{\infty}\left(\mathbb{T}^{3}\right)$,
there exists a trigonometric polynomial $P$ such that $\left\|a-P\right\|_{\infty}<\varepsilon$.
Thus for every large enough $\lambda\in\Lambda$
\begin{align*}
&\left|\left\langle ag_{\lambda},g_{\lambda}\right\rangle -\frac{1}{\mbox{area}\left(\mathbb{T}^{3}\right)}\int_{\mathbb{T}^{3}}a\left(x\right)\mbox{d}x\right|\\
&\leq\left|\left\langle ag_{\lambda},g_{\lambda}\right\rangle -\left\langle Pg_{\lambda},g_{\lambda}\right\rangle \right|+\left|\left\langle Pg_{\lambda},g_{\lambda}\right\rangle -\frac{1}{\mbox{area}\left(\mathbb{T}^{3}\right)}\int_{\mathbb{T}^{3}}P\left(x\right)\mbox{d}x\right|\\
&+\left|\frac{1}{\mbox{area}\left(\mathbb{T}^{3}\right)}\int_{\mathbb{T}^{3}}P\left(x\right)\mbox{d}x-\frac{1}{\mbox{area}\left(\mathbb{T}^{3}\right)}\int_{\mathbb{T}^{3}}a\left(x\right)\mbox{d}x\right|\\
&<2\left\|a-P\right\|_{\infty}+\varepsilon<3\varepsilon\qedhere.
\end{align*}

\end{proof}

\section{The Irrational Torus}

\subsection{Basic Setup}

Let $\mathbb{T}^{3}=\mathbb{R}^{3}/2\pi\mathcal{L}_{0}$ be a flat
three-dimensional torus, where 
\[
\mathcal{L}_{0}=\mathbb{Z}\left(a,0,0\right)\oplus\mathbb{Z}\left(0,b,0\right)\oplus\mathbb{Z}\left(0,0,c\right)
\]
is a lattice, such that $1/a^{2},1/b^{2},1/c^{2}\in\mathbb{R}$ are
independent over $\mathbb{Q}$. We also demand that at least one of
the ratios $b^{2}/a^{2},\, c^{2}/a^{2},\, c^{2}/b^{2}$ will be an
irrational of finite type $\tau$, as in Definition \ref{def:Type1Irrational}
(without loss of generality assume it to be $c^{2}/a^{2}$). 

The norm of a lattice vector $\xi=\left(\xi_{1}/a,\xi_{2}/b,\xi_{3}/c\right)\in\mathcal{L}$
is 
\[
\xi_{1}^{2}/a^{2}+\xi_{2}^{2}/b^{2}+\xi_{3}^{2}/c^{2}
\]
so if $\eta=\left(\eta_{1}/a,\eta_{2}/b,\eta_{3}/c\right)$ is another
vector of $\mathcal{L}$ of the same norm, we have
\[
\left(\xi_{1}^{2}-\eta_{1}^{2}\right)/a^{2}+\left(\xi_{2}^{2}-\eta_{2}^{2}\right)/b^{2}+\left(\xi_{3}^{2}-\eta_{3}^{2}\right)/c^{2}=0
\]
and since $1/a^{2},1/b^{2},1/c^{2}$ are independent over the rationals
we get that $\eta_{i}=\pm\xi_{i}$ for $1\leq i\leq3$.

We conclude that for $n\in\mathcal{N}$ we have $r_{\mathcal{L}}\left(n\right)=1,2,4$
or $8$.

Weyl's law for the torus, establishing the asymptotics of the counting
function $N\left(x\right)$ of eigenvalues below $x$, is equivalent
to counting the number of points of the standard lattice $\mathbb{Z}^{3}$
in an ellipsoid:
\begin{align*}
N\left(x\right)&=\#\left\{ \left(\xi_{1},\xi_{2},\xi_{3}\right)\in\mathbb{Z}^{3}:\,\xi_{1}^{2}/a^{2}+\xi_{2}^{2}/b^{2}+\xi_{3}^{2}/c^{2}\leq x\right\}\\
&=\frac{4}{3}\pi abcx^{3/2}+O\left(x^{\theta}\right).
\end{align*}
The trivial bound on the remainder term is $\theta=1$. We will need
a bound $\theta<1$, such as the bound due to Hlawka \cite{Hlawka}
using Poisson summation which translates to $\theta=3/4.$

Note that since the multiplicities $r_{\mathcal{L}}\left(n\right)$
are bounded, we have
\[
\#\left\{ \lambda\in\Lambda:\,\lambda\leq X\right\} \asymp N\left(X\right)\asymp X^{3/2}.
\]
We will need to analyze the spacing between the elements of $\Lambda$.
We first notice that for most of the elements, the nearest neighbor
cannot be too far:

For $\varepsilon>0,$ define

\[
\Lambda_{1}=\Lambda_{1}^{\varepsilon}=\left\{ \lambda\in\Lambda:\,\lambda\geq1,\,\left(\lambda,\lambda+\lambda^{-1/2+\varepsilon}\right)\cap\Lambda\neq\emptyset\right\}.
\]
We claim that this is a density one set in $\Lambda.$ To show this,
define
\[
B_{1}=\Lambda\setminus\Lambda_{1}=\left\{ \lambda\in\Lambda:\,\lambda<1\right\} \cup\left\{ \lambda\in\Lambda:\,\lambda\geq1,\,\left(\lambda,\lambda+\lambda^{-1/2+\varepsilon}\right)\cap\Lambda=\emptyset\right\}.
\]
We want to show that $B_{1}$ is a density zero set in $\Lambda$,
that is, 
\[
\#\left\{ \lambda\in B_{1}:\,\lambda\leq X\right\} =o\left(\#\left\{ \lambda\in\Lambda:\,\lambda\leq X\right\} \right)=o\left(X^{3/2}\right).
\]
We have $\#\left\{ \lambda\in\Lambda:\,\lambda<1\right\} =O\left(1\right)$,
so we only need to check that
\[
\#\left\{ \lambda\in\tilde{B_{1}}:\,\lambda\leq X\right\} \leq X^{3/2-\varepsilon}
\]
where
\[
\tilde{B}_{1}=\left\{ \lambda\in\Lambda:\,\lambda\geq1,\,\left(\lambda,\lambda+\lambda^{-1/2+\varepsilon}\right)\cap\Lambda=\emptyset\right\} .
\]
Indeed, the intervals $\left(\lambda,\lambda+\lambda^{-1/2+\varepsilon}\right),\,\lambda\in\tilde{B_{1}}$
are disjoint, and therefore 
\begin{align*}
\#\left\{ \lambda\in\tilde{B_{1}}:\,\lambda\leq X\right\} \cdot X^{-1/2+\varepsilon}&\leq\mbox{meas}\left(\bigcup_{\begin{subarray}{c}
\lambda\in\tilde{B_{1}}\\
\lambda\leq X
\end{subarray}}\left(\lambda,\lambda+\lambda^{-1/2+\varepsilon}\right)\right)\\
&\leq\mbox{meas}\left(\left(1,X+1\right)\right)=X
\end{align*}
so $\#\left\{ \lambda\in\tilde{B_{1}}:\,\lambda\leq X\right\} \leq X^{3/2-\varepsilon}$.

\subsection{Lattice Points in Thin Spherical Shells}

For $0<L\leq1$, define $A\left(\lambda,L\right)$ to be the set of
lattice points in the spherical shell $\lambda-L<\left|x\right|^{2}<\lambda+L$:
\[
A\left(\lambda,L\right)=\left\{ \xi\in\mathcal{L}:\,\lambda-L<\left|\xi\right|^{2}<\lambda+L\right\}.
\]
Define
\[
\tilde{A}\left(\lambda,L\right)=\left(\lambda-L,\lambda+L\right)\cap\Lambda.
\]
We want to show that for $L=\lambda^{-\delta}$, $0<\delta<\min\left\{ \left(1-\theta\right)/2-\varepsilon,1/\tau-\varepsilon\right\} $
(for $\varepsilon>0$ small enough there is such $\delta$, since
$\theta<1)$, we have a density one set in $\Lambda$ such that $\#\tilde{A}\left(\lambda,3L\right)\leq L\lambda^{1/2+2\varepsilon}$
for every element $ \lambda $ of this set. In order to show this, we need the
following lemma:
\begin{lem}
\label{lem:IrrationalThinShells}Let $L=\lambda^{-\delta}$, $0<\delta<\min\left\{ \left(1-\theta\right)/2-\varepsilon,1/\tau-\varepsilon\right\} $.
We have 
\[
\sum\limits _{\begin{subarray}{c}
\lambda\in\Lambda_{1}\\
X<\lambda\leq2X
\end{subarray}}\#\tilde{A}\left(\lambda,3L\right)\ll X^{2-\delta+\varepsilon}.
\]
\end{lem}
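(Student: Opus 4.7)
The plan is to bound $\#\tilde A(\lambda,3L)$ by a lattice-point count on a spherical shell centred at a nearby $n\in\mathcal{N}$, transfer the outer sum from $\Lambda_1$ to a sum over $\mathcal{N}$, and then recognise the result as a count of pairs $(\xi,\eta)\in\mathcal{L}^2$ with $|\xi|^2\approx|\eta|^2$. This pair count splits coordinate-wise, and the decisive input will be the equidistribution of the orbit $\{k\cdot a^2/c^2\bmod 1\}$ at the rate granted by the finite-type hypothesis on $c^2/a^2$.

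First, interlacing of the perturbed eigenvalues with the elements of $\mathcal{N}$ yields $\#\tilde A(\lambda,3L)\le \#(\mathcal{N}\cap(\lambda-3L,\lambda+3L))+1\le \#A(\lambda,3L)+1$. Since $\lambda\in\Lambda_1$ has a $\Lambda$-neighbour $\mu\in(\lambda,\lambda+\lambda^{-1/2+\varepsilon})$, interlacing forces an $n_\lambda\in\mathcal{N}\cap(\lambda,\mu)$, so $|n_\lambda-\lambda|\ll L$ because $\delta<1/2$; hence $A(\lambda,3L)\subset A(n_\lambda,4L)$. The map $\lambda\mapsto n_\lambda$ (take the smallest element of $\mathcal{N}$ strictly greater than $\lambda$) is injective by interlacing, so, writing $I'=(X-O(L),2X+O(L))$,
\[
\sum_{\lambda\in\Lambda_1\cap(X,2X]}\#\tilde A(\lambda,3L)\;\ll\;\sum_{n\in\mathcal{N}\cap I'}\#A(n,4L)+X^{3/2}.
\]
Swapping the order of summation and using that each $n\in\mathcal{N}$ is realised as $|\eta|^2$ for some $\eta\in\mathcal{L}$ bounds the right hand side by
\[
T:=\#\bigl\{(\xi,\eta)\in\mathcal{L}^2:\,|\xi|^2\in I',\ \bigl||\xi|^2-|\eta|^2\bigr|<4L\bigr\}.
\]

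To bound $T$, I would write $\xi=(\xi_1/a,\xi_2/b,\xi_3/c)$, $\eta=(\eta_1/a,\eta_2/b,\eta_3/c)$ with $\xi_i,\eta_i\in\mathbb{Z}$, and set $e_i=\eta_i^2-\xi_i^2\in\mathbb{Z}$, so the condition becomes $|e_1+e_2(a^2/b^2)+e_3(a^2/c^2)|<4La^2$. Split $T$ by the number of vanishing $e_i$: the all-zero case produces $\xi_i=\pm\eta_i$ and contributes $O(X^{3/2})$; the two-zero cases have no solutions for $X$ large because the unique nonzero integer $e_i$ would need $|e_i|<4L(\text{axis}_i)^2<1$; each one-zero case contributes $O(X^{3/2+\varepsilon})$, combining $O(X^{\varepsilon})$ divisor bounds on factorisations of the nonzero $e_j$'s, $O(X^{1/2})$ choices for the free coordinate, and the trivial bound $O(X)$ on the number of admissible $(e_j,e_k)$. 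The decisive case is $e_1e_2e_3\neq 0$: for fixed $(e_2,e_3)$ the admissible $e_1$ lies in an interval of length $8La^2<1$, so at most one $e_1$ exists, and only when $\|e_2(a^2/b^2)+e_3(a^2/c^2)\|<4La^2$. Because $a^2/c^2$ inherits the finite type $\tau$ of $c^2/a^2$, standard discrepancy bounds for $(k\cdot a^2/c^2\bmod 1)$ give $D_N\ll N^{-1/\tau+\varepsilon}$; so for each fixed $e_2$ the number of $e_3\in[-CX,CX]$ with $\{e_3\cdot a^2/c^2\}$ in a prescribed interval of length $8La^2$ is $\ll LX+X^{1-1/\tau+\varepsilon}\ll X^{1-\delta}$ thanks to $\delta<1/\tau-\varepsilon$. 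Summing over $|e_2|\ll X$ yields $O(X^{2-\delta})$ admissible triples, each producing at most $O(X^{3\varepsilon})$ pairs $(\xi,\eta)$ by divisor bounds, for a main-case contribution of $O(X^{2-\delta+\varepsilon})$. The principal obstacle is precisely this case: without the finite-type hypothesis the trivial count $\#\{(e_2,e_3):|e_i|\ll X\}\asymp X^2$ is off by a factor $X^{\delta}$, and it is the equidistribution rate for the orbit of $a^2/c^2$ that recovers this saving.
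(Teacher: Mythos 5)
Your proof is correct and follows essentially the same route as the paper: reduce to a two-variable lattice count $T$ via the injective assignment $\lambda\mapsto n_\lambda$, split by the number of vanishing $e_i=\eta_i^2-\xi_i^2$, control the nonzero coordinates by divisor bounds, and recover the $X^{-\delta}$ saving in the all-nonzero case via the Erd\H{o}s--Tur\'an discrepancy estimate for a finite-type irrational. The only cosmetic differences are that you solve for $e_1$ and run the discrepancy argument on $a^2/c^2$ (correctly noting this inherits finite type $\tau$ from $c^2/a^2$), whereas the paper solves for $z_3$ and uses $c^2/a^2$ directly.
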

\begin{proof}
For every $\lambda\in\Lambda_{1}$, $X<\lambda\leq2X$, choose $\xi\in\mathcal{L}$
such that $\left|\xi\right|^{2}$ is the smallest element in $\mathcal{N}$
greater than $\lambda$. Since $\lambda\in\Lambda_{1}$ and $\delta<\left(1-\theta\right)/2-\varepsilon<1/2-\varepsilon$
we know that $\xi\in A\left(\lambda,X^{-\delta}\right)$, and we conclude
that
\[
\#\tilde{A}\left(\lambda,3X^{-\delta}\right)\ll\#A\left(\lambda,3X^{-\delta}\right)\leq\#A\left(\left|\xi\right|^{2},4X^{-\delta}\right)=\sum_{\begin{subarray}{c}
\eta\in\mathcal{L}\\
\left|\left|\eta\right|^{2}-\left|\xi\right|^{2}\right|<4X^{-\delta}
\end{subarray}}1
\]
so
\begin{align}
\sum_{\begin{subarray}{c}
\lambda\in\Lambda_{1}\\
X<\lambda\leq2X
\end{subarray}}\#\tilde{A}\left(\lambda,3L\right)&\leq\sum_{\begin{subarray}{c}
\lambda\in\Lambda_{1}\\
X<\lambda\leq2X
\end{subarray}}\#\tilde{A}\left(\lambda,3X^{-\delta}\right)\nonumber\\
&\ll\sum_{\begin{subarray}{c}
\xi,\eta\in\mathcal{L}\\
\left|\xi\right|^{2}\leq3X\\
\left|\left|\xi\right|^{2}-\left|\eta\right|^{2}\right|<4X^{-\delta}
\end{subarray}}1\label{eq:IrrationalLatticePointsNo}\\
&\leq\#\left\{ \xi,\eta\in\mathcal{L}:\,\left|\xi\right|^{2},\left|\eta\right|^{2}\leq4X,\,\left|\left|\xi\right|^{2}-\left|\eta\right|^{2}\right|<4X^{-\delta}\right\} .\nonumber 
\end{align}
Thus we want to bound the number of solutions of the RHS of \eqref{eq:IrrationalLatticePointsNo},
which is a quadratic Diophantine inequality with a shrinking target,
by $O\left(X^{2-\delta+\varepsilon}\right)$.

We transform the problem into linear Diophantine inequality as follows:
Write $\xi=\left(\xi_{1}/a,\xi_{2}/b,\xi_{3}/c\right)$, $\eta=\left(\eta_{1}/a,\eta_{2}/b,\eta_{3}/c\right)$,
then
\[
\left|\xi\right|^{2}-\left|\eta\right|^{2}=\frac{1}{a^{2}}\left(\xi_{1}^{2}-\eta_{1}^{2}\right)+\frac{1}{b^{2}}\left(\xi_{2}^{2}-\eta_{2}^{2}\right)+\frac{1}{c^{2}}\left(\xi_{3}^{2}-\eta_{3}^{2}\right)=\frac{z_{1}}{a^{2}}+\frac{z_{2}}{b^{2}}+\frac{z_{3}}{c^{2}}
\]
where $z_{j}=\xi_{j}^{2}-\eta_{j}^{2}$ is of size $\left|z_{j}\right|\leq CX$
for some constant $C$.

Assume first that for every $1\leq j\leq3$: $\xi_{j}^{2}\neq\eta_{j}^{2}$,
so $z_{j}\neq0$. The number of solutions to
\[
z_{j}=\xi_{j}^{2}-\eta_{j}^{2}=\left(\xi_{j}-\eta_{j}\right)\left(\xi_{j}+\eta_{j}\right)
\]
is bounded by the number of divisors of $z_{j}$ which is $O\left(z_{j}^{\varepsilon}\right)=O\left(X^{\varepsilon}\right).$
Thus we have
\begin{align*}
&\#\left\{ \xi,\eta\in\mathcal{L}:\,\left|\xi\right|^{2},\left|\eta\right|^{2}\leq4X,\,\xi_{j}^{2}\neq\eta_{j}^{2},\,\left|\left|\xi\right|^{2}-\left|\eta\right|^{2}\right|<4X^{-\delta}\right\}\\
&\ll X^{\varepsilon}\cdot\#\left\{ \left(z_{1},z_{2},z_{3}\right)\in\mathbb{Z}^{3}:\,1\leq\left|z_{j}\right|\leq CX,\,\left|\frac{z_{1}}{a^{2}}+\frac{z_{2}}{b^{2}}+\frac{z_{3}}{c^{2}}\right|<4X^{-\delta}\right\} .
\end{align*}
Denote 
\[
A_{X}=\left\{ \left(z_{1},z_{2},z_{3}\right)\in\mathbb{Z}^{3}:\,1\leq\left|z_{j}\right|\leq CX,\,\left|\frac{z_{1}}{a^{2}}+\frac{z_{2}}{b^{2}}+\frac{z_{3}}{c^{2}}\right|<4X^{-\delta}\right\} .
\]
We will show that $\#A_{X}\ll X^{2-\delta}$: first note that for
every $\left(z_{1},z_{2},z_{3}\right)\in A_{X}$, assuming that $X$
is large enough we have
\[
\left|\frac{c^{2}}{a^{2}}z_{1}+\frac{c^{2}}{b^{2}}z_{2}+z_{3}\right|<\frac{1}{4}
\]
so $z_{3}$ is uniquely determined by the values of $z_{1},z_{2}$,
and we have 
\[
\left\|\frac{c^{2}}{a^{2}}z_{1}+\frac{c^{2}}{b^{2}}z_{2}\right\|<4X^{-\delta}
\]
so 
\begin{align*}
&\#A_{X}\leq\#\left\{ \left(z_{1},z_{2}\right)\in\mathbb{Z}^{2}:\,1\leq\left|z_{j}\right|\leq CX,\,\left\|\frac{c^{2}}{a^{2}}z_{1}+\frac{c^{2}}{b^{2}}z_{2}\right\|<4X^{-\delta}\right\} \\
&=\sum_{\begin{subarray}{c}
1\leq\left|z_{2}\right|\leq CX\\
0\leq j\leq1
\end{subarray}}\#\left\{ z_{1}\in\mathbb{N}:\, z_{1}\leq\lfloor CX\rfloor,\,\left\|\left(-1\right)^{j}\frac{c^{2}}{a^{2}}z_{1}+\frac{c^{2}}{b^{2}}z_{2}\right\|<4X^{-\delta}\right\}\ll\\
&\sum_{\begin{subarray}{c}
1\leq\left|z_{2}\right|\leq CX\\
0\leq j_{1},j_{2}\leq1
\end{subarray}}\#\left\{ z_{1}\in\mathbb{N}:\, z_{1}\leq\lfloor CX\rfloor,\,\left\{ \left(-1\right)^{j_{1}}\frac{c^{2}}{a^{2}}z_{1}+\left(-1\right)^{j_{2}}\frac{c^{2}}{b^{2}}z_{2}\right\} <4X^{-\delta}\right\}.
\end{align*}

In the Appendix \eqref{sub:AppDiscrepancy}, we show that for every sequence $ x_n=\alpha n+\beta $, where $ \alpha\in \mathbb{R} $ is an irrational of finite type $ \tau $ and $ \beta\in \mathbb{R} $, we have an upper bound for the discrepancy $ D_N $ of the sequence $ \left(x_n\right) $: \[ D_N\le cN^{-1/\tau+\varepsilon} \] where $ c=c\left(\alpha,\varepsilon\right) $ is a constant which does not depend on $ \beta $ (formula \eqref{eq:DiscrepancyEst}). 

For every $z_{2}$, using this bound for the sequence $ x_n=\alpha n+\beta $, with $ \alpha=\pm c^{2}/a^{2} $ (which is an irrational of finite type $ \tau $) and $\beta=\pm \left(c^{2}/b^{2}\right)z_{2}$, we get that (since  $\delta<1/\tau-\varepsilon$)
\[
\#\left\{ z_{1}\in\mathbb{N}:\, z_{1}\leq\lfloor CX\rfloor,\,\left\{ \pm\frac{c^{2}}{a^{2}}z_{1}\pm\frac{c^{2}}{b^{2}}z_{2}\right\} <4X^{-\delta}\right\} \ll X^{1-\delta}
\]
so we conclude that
\[
\#A_{X}\ll X^{2-\delta}.
\]
Assume now that exactly one of the $z_{j}$ equals zero: without loss
of generality assume that $z_{1}=0$ and $z_{2},z_{3}\neq0$. Since
for $j=2,3$ the number of solutions to $z_{j}=\xi_{j}^{2}-\eta_{j}^{2}$
is $O\left(X^{\varepsilon}\right)$, and the number of solutions to
$\xi_{1}^{2}=\eta_{1}^{2}$ is $O\left(X^{1/2}\right)$ we conclude
that the number of solutions of the RHS of \eqref{eq:IrrationalLatticePointsNo}
(under our assumption) is bounded by
\[
X^{1/2+\varepsilon}\cdot\#\left\{ \left(z_{2},z_{3}\right)\in\mathbb{Z}^{2}:\,1\leq\left|z_{j}\right|\leq CX,\,\left|\frac{z_{2}}{b^{2}}+\frac{z_{3}}{c^{2}}\right|<4X^{-\delta}\right\}.
\]
Denote 
\[
B_{X}=\left\{ \left(z_{2},z_{3}\right)\in\mathbb{Z}^{2}:\,1\leq\left|z_{j}\right|\leq CX,\,\left|\frac{z_{2}}{b^{2}}+\frac{z_{3}}{c^{2}}\right|<4X^{-\delta}\right\} .
\]
Note that for every $\left(z_{2},z_{3}\right)\in B_{X}$, assuming
that $X$ is large enough we have
\[
\left|\frac{c^{2}}{b^{2}}z_{2}+z_{3}\right|<\frac{1}{4}
\]
so $z_{3}$ is uniquely determined by the value of $z_{2}$, and therefore
$\#B_{X}\ll X$. 

If exactly two of the $z_{j}$ equal zero, for instance $z_{1}=z_{2}=0$,
then for large enough $X$ we must have $z_{3}=0$, so the number
of solutions of the RHS of \eqref{eq:IrrationalLatticePointsNo} (under
our assumption) is $O$$\left(X^{3/2}\right)$. Since $\delta<\left(1-\theta\right)/2-\varepsilon<1/2$
the lemma is proved. 
\end{proof}
Define
\[
\Lambda_{2}=\Lambda_{2}^{\varepsilon,\delta}=\left\{ \lambda\in\Lambda_{1}:\,\#\tilde{A}\left(\lambda,3L\right)\leq L\lambda^{1/2+2\varepsilon}\right\}.
\]
We will show that this is a density one set in $\Lambda_{1}$ (and
hence in $\Lambda$):

Define
\[
B_{2}=\Lambda_{1}\setminus\Lambda_{2}=\left\{ \lambda\in\Lambda_{1}:\,\#\tilde{A}\left(\lambda,3L\right)>L\lambda^{1/2+2\varepsilon}\right\}.
\]
We will check that 
\[
\#\left\{ \lambda\in B_{2}:\,\lambda\leq X\right\} \ll X^{3/2-\varepsilon}.
\]
Indeed, from Lemma \ref{lem:IrrationalThinShells} we have
\begin{align*}
\#\left\{ \lambda\in B_{2}:\, X<\lambda\leq2X\right\} \cdot X^{1/2+2\varepsilon-\delta}&<\sum_{\begin{subarray}{c}
\lambda\in B_{2}\\
X<\lambda\leq2X
\end{subarray}}\#\tilde{A}\left(\lambda,3L\right)\\
&\leq\sum_{\begin{subarray}{c}
\lambda\in\Lambda_{1}\\
X<\lambda\leq2X
\end{subarray}}\#\tilde{A}\left(\lambda,3L\right)\\
&\ll X^{2-\delta+\varepsilon}
\end{align*}
so there exist $ C>0 $ and $ M>0 $ such that for all $ X\ge M/2 $
\begin{equation}
\#\left\{ \lambda\in B_{2}:\, X<\lambda\leq2X\right\} \le CX^{3/2-\varepsilon}.\label{eq:MeasureZeroXto2X}
\end{equation}
Note that
\[
\#\left\{ \lambda\in B_{2}:\,\lambda\leq X\right\} =\sum_{k=0}^{\infty}\#\left\{ \lambda\in B_{2}:\, X/2^{k+1}<\lambda\leq X/2^{k}\right\} 
\]
(and actually the summation over $k$ is finite). From \eqref{eq:MeasureZeroXto2X}, for every $k\geq0$ such that $ X/2^{k+1}\ge M/2 $ (so $ k\le\lfloor\log_2\left(X/M\right)\rfloor $), we have 
\[
\#\left\{ \lambda\in B_{2}:\, X/2^{k+1}<\lambda\leq X/2^{k}\right\} \le\ C\left(\frac{X}{2^{k+1}}\right)^{3/2-\varepsilon}
\]
so for $ X\ge M $ we have
\begin{align*}
&\sum_{k=0}^{\infty}\#\left\{ \lambda\in B_{2}:\, X/2^{k+1}<\lambda\leq X/2^{k}\right\}\\
&=\sum_{k=0}^{\lfloor\log_2\left(X/M\right)\rfloor}\#\left\{ \lambda\in B_{2}:\, X/2^{k+1}<\lambda\leq X/2^{k}\right\}\\
&+\sum_{k=\lfloor\log_2\left(X/M\right)\rfloor +1}^{\infty}\#\left\{ \lambda\in B_{2}:\, X/2^{k+1}<\lambda\leq X/2^{k}\right\}\\
&\le CX^{3/2-\varepsilon}\sum_{k=0}^{\lfloor\log_2\left(X/M\right)\rfloor}\frac{1}{2^{\left(k+1\right)\cdot\left(3/2-\varepsilon\right)}}+\#\left\{ \lambda\in B_{2}:\, \lambda\le M\right\}\\
&\ll X^{3/2-\varepsilon}
\end{align*}
as we claimed.

\subsection{Bounds for the Green's Function and Truncation}

We first give a lower bound for the $L^{2}$-norm of the Green's function
$G_{\lambda}$:
\begin{lem}
\label{lem:IrrationalNormBound}For every $\lambda\in\Lambda_{2}$,
we have 
\[
\left\|G_{\lambda}\right\|_{2}^{2}\gg\lambda^{1-2\varepsilon}.
\]
\end{lem}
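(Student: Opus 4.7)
The plan is to exploit a single dominant term in the spectral expansion
\[
\left\|G_{\lambda}\right\|_{2}^{2}\asymp\sum_{n\in\mathcal{N}}\frac{r_{\mathcal{L}}\left(n\right)}{\left(n-\lambda\right)^{2}},
\]
coming from an unperturbed eigenvalue $n\in\mathcal{N}$ that lies very close to $\lambda$. Since $\Lambda_{2}\subseteq\Lambda_{1}$, the defining property of $\Lambda_{1}$ provides, for every $\lambda\in\Lambda_{2}$, some $\lambda'\in\Lambda$ with $\lambda<\lambda'<\lambda+\lambda^{-1/2+\varepsilon}$.

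First I would use the interlacing property of the perturbed eigenvalues with the elements of $\mathcal{N}$. Since the sequences $\left(n_{k}\right)$ and $\left(\lambda_{k}\right)$ satisfy $\lambda_{k}<n_{k}<\lambda_{k+1}$, between the two consecutive perturbed eigenvalues $\lambda<\lambda'$ there must lie some $n\in\mathcal{N}$. This forces $\lambda<n<\lambda'$, and hence
\[
\left|n-\lambda\right|<\lambda^{-1/2+\varepsilon}.
\]

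Next I would restrict the sum over $\mathcal{N}$ to this single term. Using $r_{\mathcal{L}}\left(n\right)\geq1$ for every $n\in\mathcal{N}$, we obtain
\[
\left\|G_{\lambda}\right\|_{2}^{2}\asymp\sum_{m\in\mathcal{N}}\frac{r_{\mathcal{L}}\left(m\right)}{\left(m-\lambda\right)^{2}}\geq\frac{r_{\mathcal{L}}\left(n\right)}{\left(n-\lambda\right)^{2}}\geq\frac{1}{\lambda^{-1+2\varepsilon}}=\lambda^{1-2\varepsilon},
\]
which is the desired bound.

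There is essentially no obstacle here: the work has already been done in constructing the density-one set $\Lambda_{1}$, and the proof parallels Lemma \ref{lem:RationalNormBound} from the rational case. The only subtlety worth flagging is that, in contrast to the standard-torus case where one could exploit the lower bound $r_{3}(n_{0})\gg n_{0}^{1/2-\varepsilon}$ for a nearby $n_{0}\equiv1\,\left(8\right)$ with $n_{0}-\lambda\leq10$, here the multiplicities $r_{\mathcal{L}}(n)$ are only bounded between $1$ and $8$, so all the savings must come from the closeness of $n$ to $\lambda$ encoded in the definition of $\Lambda_{1}$.
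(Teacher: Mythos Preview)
Your proof is correct and matches the paper's argument essentially step for step: the paper also uses membership in $\Lambda_{1}$ to produce $\lambda_{0}\in\left(\lambda,\lambda+\lambda^{-1/2+\varepsilon}\right)\cap\Lambda$, invokes interlacing to find a norm $n_{0}$ with $\lambda<n_{0}<\lambda_{0}$, and then bounds the sum from below by the single term $1/(n_{0}-\lambda)^{2}>\lambda^{1-2\varepsilon}$. Your remark contrasting this with Lemma~\ref{lem:RationalNormBound} is apt and captures exactly why the set $\Lambda_{1}$ is needed here.
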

\begin{proof}
Take $\lambda_{0}\in\left(\lambda,\lambda+\lambda^{-1/2+\varepsilon}\right)\cap\Lambda$.
Let $n_{0}$ be some norm such that $\lambda<n_{0}<\lambda_{0}$.

We have 
\begin{align*}
\left\|G_{\lambda}\right\|_{2}^{2}\asymp\sum_{\xi\in\mathcal{L}}\frac{1}{\left(\left|\xi\right|^{2}-\lambda\right)^{2}}&=\sum_{n\in\mathcal{N}}\frac{r_{\mathcal{L}}\left(n\right)}{\left(n-\lambda\right)^{2}}\\
&\asymp\sum_{n\in\mathcal{N}}\frac{1}{\left(n-\lambda\right)^{2}}\\
&\geq\frac{1}{\left(n_{0}-\lambda\right)^{2}}\\
&>\frac{1}{\left(\lambda_{0}-\lambda\right)^{2}}>\lambda^{1-2\varepsilon}\qedhere.
\end{align*}

\end{proof}
We will now use a truncation procedure.

Recall that for $0<L\leq1$ we defined
\[
A\left(\lambda,L\right)=\left\{ \xi\in\mathcal{L}:\,\lambda-L<\left|\xi\right|^{2}<\lambda+L\right\} 
\]
as the set of lattice points in the spherical shell $\lambda-L<\left|x\right|^{2}<\lambda+L$.
We denote by 
\[
G_{\lambda,L}=-\frac{1}{8\pi^{3}}\sum\limits _{\xi\in A\left(\lambda,L\right)}\frac{\exp\left(i\xi\cdot\left(x-x_{0}\right)\right)}{\left|\xi\right|^{2}-\lambda}
\]
the truncated Green's function, and let $g_{\lambda,L}$ be the $L^{2}$-normalized
truncated Green's function:
\[
g_{\lambda,L}=\frac{G_{\lambda,L}}{\left\|G_{\lambda,L}\right\|_{2}}.
\]

\begin{lem}
\label{lem:Irrational TruncDiff}For $0<\delta<\min\left\{ \left(1-\theta\right)/2-\varepsilon,1/\tau-\varepsilon\right\} ,\, L=\lambda^{-\delta}$,
we have $\left\|g_{\lambda}-g_{\lambda,L}\right\|_{2}\to0$
as $\lambda\to\infty$ along $\Lambda_{2}$.\end{lem}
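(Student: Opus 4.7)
The plan is to mirror the rational-torus argument from Lemma \ref{lem:RationalTruncConv}. First I would use the routine reduction
\[
\left\|g_{\lambda}-g_{\lambda,L}\right\|_{2}\leq 2\,\frac{\left\|G_{\lambda}-G_{\lambda,L}\right\|_{2}}{\left\|G_{\lambda}\right\|_{2}},
\]
so that together with Lemma \ref{lem:IrrationalNormBound} (which applies because $\lambda\in\Lambda_{2}\subseteq\Lambda_{1}$), the problem reduces to showing
\[
\left\|G_{\lambda}-G_{\lambda,L}\right\|_{2}^{2}=o\left(\lambda^{1-2\varepsilon}\right)
\]
as $\lambda\to\infty$ along $\Lambda_{2}$.

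By Parseval,
\[
\left\|G_{\lambda}-G_{\lambda,L}\right\|_{2}^{2}\asymp\sum_{\begin{subarray}{c}\xi\in\mathcal{L}\\ \left|\left|\xi\right|^{2}-\lambda\right|\geq L\end{subarray}}\frac{1}{\left(\left|\xi\right|^{2}-\lambda\right)^{2}},
\]
and my attack on this tail would be a dyadic decomposition: for each $k\geq 0$, the block $L\cdot 2^{k}\leq\left|\left|\xi\right|^{2}-\lambda\right|<L\cdot 2^{k+1}$ contributes at most $\#A\left(\lambda,L\cdot 2^{k+1}\right)/\left(L\cdot 2^{k}\right)^{2}$. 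For the innermost shell $k=0$, the key input is the defining property of $\Lambda_{2}$: the interlacing of $\Lambda$ with $\mathcal{N}$, together with the uniform bound $r_{\mathcal{L}}\left(n\right)\leq 8$, lets me convert the perturbed-eigenvalue count $\#\tilde{A}\left(\lambda,3L\right)\leq L\lambda^{1/2+2\varepsilon}$ into the lattice-point count $\#A\left(\lambda,3L\right)\ll L\lambda^{1/2+2\varepsilon}$. For all $k\geq 1$, I would instead invoke Weyl's law for the three-dimensional ellipsoid with error exponent $\theta$ to obtain $\#A\left(\lambda,R\right)\ll R\lambda^{1/2}+\lambda^{\theta}$; the remote tail $\left|\xi\right|^{2}\geq 2\lambda$ is handled by the cruder volume bound $\#A\left(\lambda,R\right)\ll R^{3/2}$ and contributes only $O\left(\lambda^{-1/2}\right)$.

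Assembling these, the $k=0$ block contributes $\lambda^{1/2+\delta+2\varepsilon}$, while the geometric sums over $k\geq 1$ contribute $\lambda^{1/2+\delta}+\lambda^{\theta+2\delta}$ (from the Weyl main term and error term, respectively), giving
\[
\left\|G_{\lambda}-G_{\lambda,L}\right\|_{2}^{2}\ll\lambda^{1/2+\delta+2\varepsilon}+\lambda^{\theta+2\delta}.
\]
Dividing by $\left\|G_{\lambda}\right\|_{2}^{2}\gg\lambda^{1-2\varepsilon}$ and using $\delta<1/2$ (automatic from $\delta<(1-\theta)/2$) together with the crucial hypothesis $\delta<\left(1-\theta\right)/2-\varepsilon$, both resulting exponents of $\lambda$ are negative for $\varepsilon$ small enough. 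The one point I expect to require care is the innermost shell: in the permitted range $\delta<\left(1-\theta\right)/2$ the naive Weyl bound $L\lambda^{1/2}+\lambda^{\theta}$ at the scale $R=L$ is swamped by $\lambda^{\theta}$, so the Diophantine information encoded in $\Lambda_{2}$ (which itself relies on the hypothesis $\delta<1/\tau-\varepsilon$ invoked upstream in Lemma \ref{lem:IrrationalThinShells}) is genuinely indispensable. Matching this sharper bound at the innermost scale with the Weyl bound at all larger scales is the only delicate step; the remainder is a routine geometric sum.
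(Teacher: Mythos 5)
Your argument reaches the right conclusion, but it takes a dyadic route where the paper argues by partial summation, and it misreads where the Diophantine input actually enters. The paper converts $\sum_{\left|\left|\xi\right|^{2}-\lambda\right|\geq L}\left(\left|\xi\right|^{2}-\lambda\right)^{-2}$ directly into an integral via summation by parts against $N\left(t\right)=\frac{4}{3}\pi abct^{3/2}+O\left(t^{\theta}\right)$, obtaining $\left\|G_{\lambda}-G_{\lambda,L}\right\|_{2}^{2}\ll\lambda^{\theta}/L^{2}=\lambda^{\theta+2\delta}$ in a single stroke; combined with Lemma \ref{lem:IrrationalNormBound} this gives $\left\|g_{\lambda}-g_{\lambda,L}\right\|_{2}^{2}\ll\lambda^{2\delta-1+\theta+2\varepsilon}\to0$. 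Your dyadic decomposition yields the same bound, just with more bookkeeping.

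The one point where your reasoning is wrong rather than merely longer is the claim that the $\Lambda_{2}$ bound $\#\tilde{A}\left(\lambda,3L\right)\leq L\lambda^{1/2+2\varepsilon}$ is ``genuinely indispensable'' for the innermost block. It is not. Even using only Weyl at scale $R\asymp L$, that block contributes at most
\[
\frac{\#A\left(\lambda,2L\right)}{L^{2}}\ll\frac{L\lambda^{1/2}+\lambda^{\theta}}{L^{2}}=\lambda^{\delta+1/2}+\lambda^{\theta+2\delta},
\]
and after dividing by $\left\|G_{\lambda}\right\|_{2}^{2}\gg\lambda^{1-2\varepsilon}$ both exponents are negative: $\lambda^{\delta-1/2+2\varepsilon}\to0$ because $\delta<\left(1-\theta\right)/2<1/2$, and $\lambda^{\theta+2\delta-1+2\varepsilon}\to0$ precisely because $\delta<\left(1-\theta\right)/2-\varepsilon$. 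You appear to have observed (correctly) that $\lambda^{\theta}$ dominates $L\lambda^{1/2}$ in the permitted range of $\delta$, and concluded from this that Weyl is insufficient — but that is a non sequitur; what matters is the comparison with $\left\|G_{\lambda}\right\|_{2}^{2}$, and $\lambda^{\theta+2\delta}$ is exactly what the hypothesis $\delta<\left(1-\theta\right)/2-\varepsilon$ is tuned to control. Consequently the only role of the restriction $\lambda\in\Lambda_{2}$ (really of $\Lambda_{1}$) in this lemma is to make the lower bound of Lemma \ref{lem:IrrationalNormBound} available; the shell-count property defining $\Lambda_{2}$ is used later, in the density-one argument of the following subsection, not here.
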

\begin{proof}
As in \eqref{eq:NormalizedDiffsEst}, we get that
\[
\left\|g_{\lambda}-g_{\lambda,L}\right\|_{2}\leq2\frac{\left\|G_{\lambda}-G_{\lambda,L}\right\|_{2}}{\left\|G_{\lambda}\right\|_{2}}.
\]
We have
\[
\left\|G_{\lambda}-G_{\lambda,L}\right\|^{2}\asymp\sum_{\left|\left|\xi\right|^{2}-\lambda\right|\geq L}\frac{1}{\left(\left|\xi\right|^{2}-\lambda\right)^{2}}.
\]
We recall how to evaluate lattice sums using summation by parts:

Let $0=n_{0}<n_{1}<n_{2}<\dots$ be the set of norms, and
\[
N\left(t\right)=\sum_{n_{k}\leq t}r_{\mathcal{L}}\left(n_{k}\right).
\]
Then for a smooth function $f\left(t\right)$ on $\left[n_{A+1},n_{B}\right]$
we have
\begin{align*}
\sum_{n_{A}<\left|\xi\right|^{2}\leq n_{B}}f\left(\left|\xi\right|^{2}\right)&=N\left(n_{B}\right)f\left(n_{B}\right)\\
&-N\left(n_{A}\right)f\left(n_{A+1}\right)-\int_{n_{A+1}}^{n_{B}}f'\left(t\right)N\left(t\right)\mbox{d}t.
\end{align*}
But since
\[
N\left(x\right)=\frac{4}{3}\pi abcx^{3/2}+O\left(x^{\theta}\right)
\]
and since
\begin{align*}
\frac{4}{3}\pi abcn_{A+1}^{3/2}=N\left(n_{A+1}\right)+O\left(n_{A+1}^{\theta}\right)&=N\left(n_{A}\right)+r_{\mathcal{L}}\left(n_{A+1}\right)+O\left(n_{A+1}^{\theta}\right)\\
&=N\left(n_{A}\right)+O\left(n_{A+1}^{\theta}\right)\\
&=\frac{4}{3}\pi abcn_{A}^{3/2}+O\left(n_{A+1}^{\theta}\right)
\end{align*}
(and therefore $n_{A}\asymp n_{A+1}$, so $\frac{4}{3}\pi abcn_{A+1}^{3/2}=\frac{4}{3}\pi abcn_{A}^{3/2}+O\left(n_{A}^{\theta}\right)$),
we have
\begin{align}
\sum_{n_{A}<\left|\xi\right|^{2}\leq n_{B}}f\left(\left|\xi\right|^{2}\right)&=\left(\frac{4}{3}\pi abcn_{B}^{3/2}+O\left(n_{B}^{\theta}\right)\right)f\left(n_{B}\right)\label{eq:sum}\\
&-\left(\frac{4}{3}\pi abcn_{A}^{3/2}+O\left(n_{A}^{\theta}\right)\right)f\left(n_{A+1}\right)\nonumber \\
&-\int_{n_{A+1}}^{n_{B}}f'\left(t\right)\left(\frac{4}{3}\pi abct^{3/2}+O\left(t^{\theta}\right)\right)\mbox{d}t\nonumber \\
&=\left(\frac{4}{3}\pi abcn_{B}^{3/2}+O\left(n_{B}^{\theta}\right)\right)f\left(n_{B}\right)\nonumber\\
&-\left(\frac{4}{3}\pi abcn_{A}^{3/2}+O\left(n_{A}^{\theta}\right)\right)f\left(n_{A+1}\right)\nonumber \\
&-\frac{4}{3}\pi abcn_{B}^{3/2}f\left(n_{B}\right)+\frac{4}{3}\pi abcn_{A+1}^{3/2}f\left(n_{A+1}\right)\nonumber \\
&+\frac{4}{3}\pi abc\cdot\frac{3}{2}\int_{n_{A+1}}^{n_{B}}f\left(t\right)t^{1/2}\mbox{d}t-\int_{n_{A+1}}^{n_{B}}f'\left(t\right)O\left(t^{\theta}\right)\mbox{d}t\nonumber \\
&=2\pi abc\int_{n_{A+1}}^{n_{B}}f\left(t\right)t^{1/2}\mbox{d}t \nonumber\\
&+O\left(n_{B}^{\theta}f\left(n_{B}\right)+n_{A}^{\theta}f\left(n_{A+1})\right)\right)\nonumber \\
&+O\left(\int_{n_{A+1}}^{n_{B}}\left|f'\left(t\right)\right|t^{\theta}\mbox{d}t\right)\nonumber.
\end{align}
Now
\[
\sum_{\left|\left|\xi\right|^{2}-\lambda\right|\geq L}\frac{1}{\left(\left|\xi\right|^{2}-\lambda\right)^{2}}=\frac{1}{\lambda^{2}}+\sum_{0<\left|\xi\right|^{2}\leq\lambda-L}\frac{1}{\left(\left|\xi\right|^{2}-\lambda\right)^{2}}+\sum_{\lambda+L\leq\left|\xi\right|^{2}}\frac{1}{\left(\left|\xi\right|^{2}-\lambda\right)^{2}}.
\]
Applying \eqref{eq:sum} with $f\left(t\right)=1/\left(t-\lambda\right)^{2},$
once with $n_{A}=n_{0}=0$ and $n_{B}\leq\lambda-L<n_{B+1}$ and then
with $n_{A}<\lambda+L\leq n_{A+1}$ and $n_{B}=\infty$ gives
\begin{align*}
\sum_{n_{0}<\left|\xi\right|^{2}\leq\lambda-L}\frac{1}{\left(\left|\xi\right|^{2}-\lambda\right)^{2}}&=2\pi abc\int_{n_{1}}^{n_{B}}\frac{t^{1/2}}{\left(t-\lambda\right)^{2}}\mbox{d}t\\
&+O\left(\frac{n_{B}^{\theta}}{\left(n_{B}-\lambda\right)^{2}}\right)+O\left(\int_{n_{1}}^{n_{B}}\frac{t^{\theta}}{\left(\lambda-t\right)^{3}}\mbox{d}t\right).
\end{align*}
Note that
\begin{align*}
\int_{n_{1}}^{n_{B}}\frac{t^{1/2}}{\left(t-\lambda\right)^{2}}\mbox{d}t\leq n_{B}^{1/2}\int_{n_{1}}^{n_{B}}\frac{1}{\left(t-\lambda\right)^{2}}\mbox{d}t&\leq\lambda^{1/2}\int_{n_{1}}^{n_{B}}\frac{1}{\left(t-\lambda\right)^{2}}\mbox{d}t\\
&=\lambda^{1/2}\left(\frac{1}{\lambda-n_{B}}-\frac{1}{\lambda-n_{1}}\right)\\
&\leq\frac{\lambda^{1/2}}{L}\leq\frac{\lambda^{\theta}}{L^{2}}
\end{align*}
also
\[
\frac{n_{B}^{\theta}}{\left(n_{B}-\lambda\right)^{2}}\leq\frac{\lambda^{\theta}}{L^{2}}
\]
and
\begin{align*}
\int_{n_{1}}^{n_{B}}\frac{t^{\theta}}{\left(\lambda-t\right)^{3}}\mbox{d}t\leq n_{B}^{\theta}\int_{n_{1}}^{n_{B}}\frac{1}{\left(\lambda-t\right)^{3}}\mbox{d}t&\leq\lambda^{\theta}\int_{n_{1}}^{n_{B}}\frac{1}{\left(\lambda-t\right)^{3}}\mbox{d}t\\
&=\frac{\lambda^{\theta}}{2}\left(\frac{1}{\left(\lambda-n_{B}\right)^{2}}-\frac{1}{\left(\lambda-n_{1}\right)^{2}}\right)\\
&\leq\frac{\lambda^{\theta}}{2L^{2}},
\end{align*}
so
\[
\sum_{n_{0}<\left|\xi\right|^{2}\leq\lambda-L}\frac{1}{\left(\left|\xi\right|^{2}-\lambda\right)^{2}}\ll\frac{\lambda^{\theta}}{L^{2}}.
\]
For the second sum, we have
\begin{align*}
\sum_{\lambda+L\leq\left|\xi\right|^{2}}\frac{1}{\left(\left|\xi\right|^{2}-\lambda\right)^{2}}&=2\pi abc\int_{n_{A+1}}^{\infty}\frac{t^{1/2}}{\left(t-\lambda\right)^{2}}\mbox{d}t\\
&+O\left(\frac{n_{A}^{\theta}}{\left(n_{A+1}-\lambda\right)^{2}}\right)+O\left(\int_{n_{A+1}}^{\infty}\frac{t^{\theta}}{\left(t-\lambda\right)^{3}}\mbox{d}t\right).
\end{align*}
Note that
\begin{align*}
\int_{n_{A+1}}^{\infty}\frac{t^{1/2}}{\left(t-\lambda\right)^{2}}\mbox{d}t&=\int_{n_{A+1}-\lambda}^{\infty}\frac{\left(s+\lambda\right)^{1/2}}{s^{2}}\mbox{d}s\\
&\leq\int_{L}^{\infty}\frac{\left(s+\lambda\right)^{1/2}}{s^{2}}\mbox{d}s\\
&=\int_{L}^{\lambda}\frac{\left(s+\lambda\right)^{1/2}}{s^{2}}\mbox{d}s+\int_{\lambda}^{\infty}\frac{\left(s+\lambda\right)^{1/2}}{s^{2}}\mbox{d}s\\
&\ll\lambda^{1/2}\int_{L}^{\lambda}\frac{1}{s^{2}}\mbox{d}s+\int_{\lambda}^{\infty}\frac{1}{s^{3/2}}\mbox{d}s\\
&\ll\frac{\lambda^{1/2}}{L}\leq\frac{\lambda^{\theta}}{L^{2}}
\end{align*}
also
\[
\frac{n_{A}^{\theta}}{\left(n_{A+1}-\lambda\right)^{2}}\ll\frac{\lambda^{\theta}}{L^{2}}
\]
and
\begin{align*}
\int_{n_{A+1}}^{\infty}\frac{t^{\theta}}{\left(t-\lambda\right)^{3}}\mbox{d}t&=\int_{n_{A+1}-\lambda}^{\infty}\frac{\left(s+\lambda\right)^{\theta}}{s^{3}}\mbox{d}s\\
&\leq\int_{L}^{\infty}\frac{\left(s+\lambda\right)^{\theta}}{s^{3}}\mbox{d}s\\
&=\int_{L}^{\lambda}\frac{\left(s+\lambda\right)^{\theta}}{s^{3}}\mbox{d}s+\int_{\lambda}^{\infty}\frac{\left(s+\lambda\right)^{\theta}}{s^{3}}\mbox{d}s\\
&\ll\lambda^{\theta}\int_{L}^{\lambda}\frac{1}{s^{3}}\mbox{d}s+\int_{\lambda}^{\infty}\frac{1}{s^{3-\theta}}\mbox{d}s\ll\frac{\lambda^{\theta}}{L^{2}}
\end{align*}
so
\[
\sum_{\lambda+L\leq\left|\xi\right|^{2}}\frac{1}{\left(\left|\xi\right|^{2}-\lambda\right)^{2}}\ll\frac{\lambda^{\theta}}{L^{2}}.
\]
We conclude that
\[
\left\|G_{\lambda}-G_{\lambda,L}\right\|^{2}\ll\frac{\lambda^{\theta}}{L^{2}}
\]
and therefore by Lemma \ref{lem:IrrationalNormBound} 
\[
\left\|g_{\lambda}-g_{\lambda,L}\right\|_{2}^{2}\ll\frac{\lambda^{-1+\theta+2\varepsilon}}{L^{2}}=\lambda^{2\delta-1+\theta+2\varepsilon}.
\]
Since $\delta<\left(1-\theta\right)/2-\varepsilon$ this tends to
zero.
\end{proof}
From this, as in Lemmas \ref{lem:RationalNormEquiv}, \ref{lem:RationalSuffConv},
we get the next two lemmas:
\begin{lem}
Let $0<\delta<\min\left\{ \left(1-\theta\right)/2-\varepsilon,1/\tau-\varepsilon\right\} ,\, L=\lambda^{-\delta}$,
we have
\[
\left\|G_{\lambda,L}\right\|_{2}=\left\|G_{\lambda}\right\|_{2}\left(1+o\left(1\right)\right)
\]
as $\lambda\to\infty$ along $\Lambda_{2}$.
\end{lem}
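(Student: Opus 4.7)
The plan is to follow the exact same argument used in Lemma \ref{lem:RationalNormEquiv} for the standard torus, but substituting the irrational‐torus truncation estimate (Lemma \ref{lem:Irrational TruncDiff}) for its rational analogue. The key observation is that the conclusion is purely a triangle‐inequality consequence of the fact that $\|G_\lambda - G_{\lambda,L}\|_2$ is negligible compared to $\|G_\lambda\|_2$; we do not need to revisit any of the lattice‐point counting or Diophantine machinery.

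Concretely, the reverse triangle inequality gives
\[
\bigl| \|G_{\lambda,L}\|_2 - \|G_{\lambda}\|_2 \bigr| \leq \|G_\lambda - G_{\lambda,L}\|_2,
\]
and dividing through by $\|G_\lambda\|_2$ yields
\[
\frac{\bigl| \|G_{\lambda,L}\|_2 - \|G_{\lambda}\|_2 \bigr|}{\|G_\lambda\|_2} \leq \frac{\|G_\lambda - G_{\lambda,L}\|_2}{\|G_\lambda\|_2}.
\]
In the proof of Lemma \ref{lem:Irrational TruncDiff}, the right-hand side was shown (via summation by parts together with the Weyl-law remainder $\theta$ and the lower bound $\|G_\lambda\|_2^2 \gg \lambda^{1-2\varepsilon}$ from Lemma \ref{lem:IrrationalNormBound}) to be bounded by $\lambda^{2\delta - 1 + \theta + 2\varepsilon}$, which tends to zero under the hypothesis $\delta < (1-\theta)/2 - \varepsilon$. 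Hence the ratio $\bigl| \|G_{\lambda,L}\|_2/\|G_\lambda\|_2 - 1\bigr|$ tends to zero along $\Lambda_2$, which is exactly the claim.

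There is no real obstacle here: the statement is a soft corollary of Lemma \ref{lem:Irrational TruncDiff}, and the only thing to be careful about is that convergence takes place along the density-one subset $\Lambda_2$ rather than all of $\Lambda$, since both the lower bound on $\|G_\lambda\|_2$ and the upper bound on $\|G_\lambda - G_{\lambda,L}\|_2/\|G_\lambda\|_2$ require $\lambda \in \Lambda_2$. The argument should therefore be essentially a one-line computation referencing Lemma \ref{lem:Irrational TruncDiff}, paralleling the short proof of Lemma \ref{lem:RationalNormEquiv}.
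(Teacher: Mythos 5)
Your proof is correct and matches the paper's approach exactly: the paper states that this lemma follows "as in Lemma \ref{lem:RationalNormEquiv}," i.e.\ by the same reverse-triangle-inequality argument you give, with the truncation estimate from Lemma \ref{lem:Irrational TruncDiff} and the norm lower bound from Lemma \ref{lem:IrrationalNormBound} supplying the ingredients along $\Lambda_2$.
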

and
\begin{lem}
\label{lem:IrrationalConvSuff}Let $f\in C^{\infty}\left(\mathbb{T}^{3}\right)$
and $0<\delta<\min\left\{ \left(1-\theta\right)/2-\varepsilon,1/\tau-\varepsilon\right\} $,
$L=\lambda^{-\delta},$ we have
\[
\left|\left\langle fg_{\lambda},g_{\lambda}\right\rangle -\left\langle fg_{\lambda,L},g_{\lambda,L}\right\rangle \right|\to0
\]
as $\lambda\to\infty$ along $\Lambda_{2}$, so
\[
\left\langle fg_{\lambda,L},g_{\lambda,L}\right\rangle \to0\,\Rightarrow\left\langle fg_{\lambda},g_{\lambda}\right\rangle \to0
\]
as $\lambda\to\infty$ along $\Lambda_{2}$. 
\end{lem}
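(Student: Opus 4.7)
The plan is to imitate the proof of Lemma \ref{lem:RationalSuffConv} essentially verbatim, since once we have the $L^{2}$-approximation of $g_{\lambda}$ by $g_{\lambda,L}$ in hand (which is exactly the content of Lemma \ref{lem:Irrational TruncDiff}), the argument is a soft application of Cauchy--Schwarz that does not care whether we are on the rational or irrational torus.

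First I would write
\[
\left\langle fg_{\lambda},g_{\lambda}\right\rangle -\left\langle fg_{\lambda,L},g_{\lambda,L}\right\rangle = \left\langle fg_{\lambda},g_{\lambda}-g_{\lambda,L}\right\rangle +\left\langle f(g_{\lambda}-g_{\lambda,L}),g_{\lambda,L}\right\rangle,
\]
so that the triangle inequality reduces the problem to bounding the two inner products on the right. For each of them Cauchy--Schwarz together with $\|f h\|_{2}\le \|f\|_{\infty}\|h\|_{2}$ gives
\[
\left|\left\langle fg_{\lambda},g_{\lambda}-g_{\lambda,L}\right\rangle \right|\le \|f\|_{\infty}\|g_{\lambda}\|_{2}\|g_{\lambda}-g_{\lambda,L}\|_{2}=\|f\|_{\infty}\|g_{\lambda}-g_{\lambda,L}\|_{2},
\]
and similarly $\left|\left\langle f(g_{\lambda}-g_{\lambda,L}),g_{\lambda,L}\right\rangle \right|\le \|f\|_{\infty}\|g_{\lambda}-g_{\lambda,L}\|_{2}$, where we used that $\|g_{\lambda}\|_{2}=\|g_{\lambda,L}\|_{2}=1$.

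Now I would invoke Lemma \ref{lem:Irrational TruncDiff}, which under the hypothesis $0<\delta<\min\{(1-\theta)/2-\varepsilon,1/\tau-\varepsilon\}$ and $L=\lambda^{-\delta}$ gives $\|g_{\lambda}-g_{\lambda,L}\|_{2}\to0$ as $\lambda\to\infty$ along $\Lambda_{2}$. Combining with the previous estimate yields
\[
\left|\left\langle fg_{\lambda},g_{\lambda}\right\rangle -\left\langle fg_{\lambda,L},g_{\lambda,L}\right\rangle \right|\le 2\|f\|_{\infty}\|g_{\lambda}-g_{\lambda,L}\|_{2}\to 0,
\]
as required. For the implication, one more triangle inequality
\[
\left|\left\langle fg_{\lambda},g_{\lambda}\right\rangle \right|\le \left|\left\langle fg_{\lambda,L},g_{\lambda,L}\right\rangle \right|+\left|\left\langle fg_{\lambda},g_{\lambda}\right\rangle -\left\langle fg_{\lambda,L},g_{\lambda,L}\right\rangle \right|
\]
shows that if the truncated pairing tends to zero then so does the original one.

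There is essentially no obstacle here: all the real work was already done in establishing Lemma \ref{lem:Irrational TruncDiff} (and the lower bound Lemma \ref{lem:IrrationalNormBound} needed there, which is where the restriction to $\Lambda_{2}$ enters). The present lemma is a formal Cauchy--Schwarz consequence, so the only thing worth double-checking is that every step is valid for $\lambda\in\Lambda_{2}$ (which it is, since $\|g_{\lambda}-g_{\lambda,L}\|_{2}\to 0$ along $\Lambda_{2}$ by the cited lemma and the normalization is automatic).
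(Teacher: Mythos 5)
Your proof is correct and takes essentially the same approach as the paper: the paper itself proves this lemma by simply noting that it follows from Lemma \ref{lem:Irrational TruncDiff} exactly as Lemma \ref{lem:RationalSuffConv} follows from Lemma \ref{lem:RationalTruncConv}, which is precisely the Cauchy--Schwarz argument you reproduce. The only place the restriction to $\Lambda_2$ matters is, as you note, in invoking Lemma \ref{lem:Irrational TruncDiff}.
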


\subsection{A Density One Set}

Let $0\neq\zeta\in\mathcal{L}$, and denote $\zeta=\left(\zeta_{1}/a,\zeta_{2}/b,\zeta_{3}/c\right)$.
Assume that $\zeta_{3}\neq0$ (the other cases are symmetric).

Define
\[
S_{\zeta}=\left\{ \xi\in\mathcal{L}:\,\left|\xi\right|^{2}\geq1,\,\left|2\left\langle \xi,\zeta\right\rangle -\left|\zeta\right|^{2}\right|<1/4c^{2}\right\} .
\]
We prove now a simple upper bound for the number of elements in $S_{\zeta}$
up to $X$:
\begin{lem}
We have 
\[
\#\left\{ \xi\in S_{\zeta}:\left|\xi\right|^{2}\leq X\right\} \ll X.
\]
\end{lem}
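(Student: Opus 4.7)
The plan is to parametrize lattice vectors by their integer coordinates, rewrite the near-orthogonality condition as an integer being trapped in a short interval, and conclude that one coordinate is determined by the other two.

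More precisely, write $\xi=\left(\xi_1/a,\xi_2/b,\xi_3/c\right)$ with $\xi_1,\xi_2,\xi_3\in\mathbb{Z}$. The bound $\left|\xi\right|^2\le X$ forces $\xi_1^2/a^2,\xi_2^2/b^2,\xi_3^2/c^2\le X$, so each $\left|\xi_i\right|$ is at most a constant multiple of $\sqrt{X}$; in particular the number of admissible pairs $\left(\xi_1,\xi_2\right)$ is $O\!\left(X\right)$. The key step is then to analyze the constraint $\left|2\langle\xi,\zeta\rangle-\left|\zeta\right|^2\right|<1/4c^2$: expanding the inner product and multiplying through by $c^2$ gives
\[
\left|\,2\xi_3\zeta_3+\tfrac{2c^2}{a^2}\xi_1\zeta_1+\tfrac{2c^2}{b^2}\xi_2\zeta_2-c^2\left|\zeta\right|^2\,\right|<\tfrac{1}{4}.
\]
Here $2\xi_3\zeta_3\in\mathbb{Z}$, and the remaining real quantity is completely determined once $\xi_1,\xi_2$ are fixed. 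Since $\zeta_3$ is a nonzero integer, $\left|2\zeta_3\right|\ge 2$, so the admissible values of $\xi_3$ lie in an interval of length at most $1/\left(4\left|\zeta_3\right|\right)\le 1/4$, which contains at most one integer.

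Combining these two observations gives the bound: for each of the $O\!\left(X\right)$ choices of $\left(\xi_1,\xi_2\right)$ with $\left|\xi\right|^2\le X$, there is at most one choice of $\xi_3$ for which $\xi\in S_\zeta$, so $\#\left\{\xi\in S_\zeta:\left|\xi\right|^2\le X\right\}\ll X$ with an implied constant depending on $\zeta$ (through $\zeta_3$) and on $a,b,c$. There is no genuine obstacle here; the only point requiring a little care is verifying that the threshold $1/4c^2$ on the right-hand side was chosen precisely so that after multiplying by $c^2$ one obtains an interval strictly shorter than $1$ around an integer, which is what makes $\xi_3$ unique. The symmetric cases $\zeta_1\ne 0$ or $\zeta_2\ne 0$ (with appropriate adjustments of the constant $1/4c^2$ to $1/4a^2$ or $1/4b^2$) are handled identically.
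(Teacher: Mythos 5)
Your argument is correct and is essentially the same as the paper's: both reduce to $O(X)$ choices of the integer pair $(\xi_1,\xi_2)$ and then use the fact that the constraint $\left|2\langle\xi,\zeta\rangle-\left|\zeta\right|^2\right|<1/4c^2$, after clearing the denominator $c^2$, traps the integer $2\xi_3\zeta_3$ (the paper uses the equivalent $\zeta_3(2\xi_3-\zeta_3)$) in an interval of length $1/2$, forcing at most one admissible $\xi_3$. The only cosmetic difference is how the remaining real-valued terms are grouped.
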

\begin{proof}
For every $\xi\in S_{\zeta}$ such that $\left|\xi\right|^{2}\leq X$,
denote $\xi=\left(\xi_{1}/a,\xi_{2}/b,\xi_{3}/c\right).$

For $1\leq i\leq3$ we have $\left|\xi_{i}\right|\ll X^{1/2}$, and
\[
\left|2\left\langle \xi,\zeta\right\rangle -\left|\zeta\right|^{2}\right|=\left|\frac{\zeta_{1}}{a^{2}}\left(2\xi_{1}-\zeta_{1}\right)+\frac{\zeta_{2}}{b^{2}}\left(2\xi_{2}-\zeta_{2}\right)+\frac{\zeta_{3}}{c^{2}}\left(2\xi_{3}-\zeta_{3}\right)\right|<1/4c^{2}
\]
so
\[
\left|\frac{c^{2}}{a^{2}}\zeta_{1}\left(2\xi_{1}-\zeta_{1}\right)+\frac{c^{2}}{b^{2}}\zeta_{2}\left(2\xi_{2}-\zeta_{2}\right)+\zeta_{3}\left(2\xi_{3}-\zeta_{3}\right)\right|<1/4
\]
and we see that $\zeta_{3}\left(2\xi_{3}-\zeta_{3}\right)$ is uniquely
determined by the values of $\xi_{1},\xi_{2}$, and since $\zeta_{3}\neq0,$
we get that $\xi_{3}$ is uniquely determined by the values of $\xi_{1},\xi_{2}.$
Since $\left|\xi_{1}\right|,\left|\xi_{2}\right|\ll X^{1/2}$, we
conclude that
\[
\#\left\{ \xi\in S_{\zeta}:\left|\xi\right|^{2}\leq X\right\} \ll X
\]
as claimed.
\end{proof}
For $0\neq\zeta\in\mathcal{L},$ define 
\[
\Lambda_{\zeta}=\Lambda_{\zeta}^{\varepsilon,\delta}=\left\{ \lambda\in\Lambda_{2}:\, A\left(\lambda,L\right)\cap S_{\zeta}=\emptyset\right\} 
\]
(recall that $ L = \lambda^{-\delta} $).

We claim that this is a density one set in $\Lambda_{2}$ (and hence
in $\Lambda)$, i.e. if we denote
\[
B_{\zeta}=B_{\zeta}^{\varepsilon,\delta}=\Lambda_{2}\setminus\Lambda_{\zeta}=\left\{ \lambda\in\Lambda_{2}:\, A\left(\lambda,L\right)\cap S_{\zeta}\neq\emptyset\right\} 
\]
then
\begin{lem}
We have $\left\{ \lambda\in B_{\zeta}:\,\lambda\leq X\right\} =o\left(X^{3/2}\right)$.\end{lem}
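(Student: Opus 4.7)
The plan is to prove the estimate by a double counting argument that exchanges the roles of $\lambda$ and its witnessing lattice point $\xi$. By the definition of $B_\zeta$, every $\lambda \in B_\zeta$ comes equipped with at least one $\xi \in S_\zeta$ satisfying $\bigl||\xi|^{2}-\lambda\bigr| < L = \lambda^{-\delta} \le 1$; hence for $\lambda \le X$ any such witness satisfies $|\xi|^{2} \le X+1$. Writing $L_\mu := \mu^{-\delta}$ to emphasize the anchor, the trivial bound
\[
\#\{\lambda \in B_\zeta : \lambda \le X\} \le \sum_{\substack{\xi \in S_\zeta \\ |\xi|^{2} \le X+1}} \#\bigl\{\lambda \in B_\zeta : \lambda \le X,\ |\xi|^{2} \in A(\lambda, L_\lambda)\bigr\}
\]
reduces the problem to estimating the outer sum and each inner count separately.

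For the outer sum I will quote the preceding lemma, which gives $\#\{\xi \in S_\zeta : |\xi|^{2} \le X+1\} \ll X$. For the inner count, fix $\xi$ and let $\lambda_{0}$ denote the smallest element of $B_\zeta$ that pairs with this $\xi$. Since $\mu \mapsto L_\mu$ is decreasing, every other such $\lambda \ge \lambda_{0}$ satisfies
\[
|\lambda - \lambda_{0}| \le \bigl|\lambda - |\xi|^{2}\bigr| + \bigl||\xi|^{2} - \lambda_{0}\bigr| < L_\lambda + L_{\lambda_{0}} \le 2 L_{\lambda_{0}},
\]
so every such $\lambda$ lies in $\tilde{A}(\lambda_{0}, 3L_{\lambda_{0}})$. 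Since $\lambda_{0} \in \Lambda_{2}$, the defining property of $\Lambda_{2}$ gives $\#\tilde{A}(\lambda_{0}, 3L_{\lambda_{0}}) \le L_{\lambda_{0}} \lambda_{0}^{1/2+2\varepsilon} \le X^{1/2 - \delta + 2\varepsilon}$. Multiplying the two bounds,
\[
\#\{\lambda \in B_\zeta : \lambda \le X\} \ll X \cdot X^{1/2 - \delta + 2\varepsilon} = X^{3/2 - \delta + 2\varepsilon},
\]
which is $o(X^{3/2})$ as soon as $\varepsilon$ is chosen small enough (for instance $\varepsilon < \delta/2$).

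The main subtlety, rather than a genuine obstacle, is that the shell half-width $L_\lambda$ varies with $\lambda$, while the $\Lambda_{2}$ density bound is phrased relative to a \emph{fixed} anchor. Anchoring at the minimal $\lambda_{0}$ and invoking the monotonicity of $L_\mu = \mu^{-\delta}$ is what converts the problem into one to which the $\Lambda_{2}$ bound applies cleanly. The arithmetic content, namely that $S_\zeta$ is one dimension smaller than $\mathcal{L}$ because the condition $|2\langle \xi,\zeta\rangle - |\zeta|^{2}| < 1/(4c^{2})$ together with $\zeta_{3}\neq 0$ forces $\xi_{3}$ to be determined by $\xi_{1},\xi_{2}$, has already been absorbed into the preceding lemma; all that remains is to feed that input together with the $\Lambda_{2}$ spacing bound into the double count above.
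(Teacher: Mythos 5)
Your proof is correct and takes essentially the same route as the paper's: both bound $\#B_\zeta$ by grouping elements of $B_\zeta$ according to a witness lattice point in $S_\zeta$ (the paper via a map $\iota$ to the nearest norm in $\mathcal{N}_\zeta$, you via a direct double count over pairs), then combine the bound $\#\{\xi\in S_\zeta:|\xi|^2\le X\}\ll X$ with the $\Lambda_2$ spacing bound $\#\tilde{A}(\lambda_0,3L_{\lambda_0})\le\lambda_0^{1/2-\delta+2\varepsilon}$. Anchoring at the minimal $\lambda_0$ and using monotonicity of $\mu\mapsto\mu^{-\delta}$ is a clean way to handle the variable shell half-width, matching the paper's argument up to presentation.
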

\begin{proof}
Define $\mathcal{N}_{\zeta}\subseteq\mathcal{N}$ to be the set of
norms $\left|\xi\right|^{2}$ of $\xi\in S_{\zeta}$. We have 
\[
\#\left\{ n\in\mathcal{N}_{\zeta}:\, n\leq2X\right\} \asymp\#\left\{ \xi\in S_{\zeta}:\,\left|\xi\right|^{2}\leq2X\right\} \ll X.
\]
We have a map $\iota:B_{\zeta}\to\mathcal{N}_{\zeta}$ defined by
$\iota\left(\lambda\right)$ being the closest element $n\in\mathcal{N}_{\zeta}$
to $\lambda$; if there are two such elements, i.e. $n_{-}<\lambda<n_{+}$
with $n_{\pm}\in\mathcal{N}_{\zeta}$ and $n_{+}-\lambda=\lambda-n_{-},$
then set $\iota\left(\lambda\right)=n_{+}.$

We have $\#\iota^{-1}\left(n\right)\ll n^{1/2-\delta+2\varepsilon}.$
Indeed, first note that 
\[
\iota^{-1}\left(n\right)\leq\#\left\{ \lambda\in\Lambda_{2}:\,\exists\xi\in S_{\zeta}\cap A\left(\lambda,L\right),\,\left|\xi\right|^{2}=n\right\} .
\]
For every $\lambda$ in the set above we have $\left|\lambda-n\right|\leq\lambda^{-\delta},$
and if we choose one of the elements in the set, say $\lambda_{0}$,
we have that for every other $\lambda$ (assuming that $n$ is large
enough): $\left|\lambda_{0}-\lambda\right|\leq\lambda_{0}^{-\delta}+\lambda^{-\delta}\leq3\lambda_{0}^{-\delta}$
(since for large enough $n$: $\lambda\geq n-\lambda^{-\delta}\geq\lambda_{0}-\lambda_{0}^{-\delta}-\lambda^{-\delta}\geq\lambda_{0}/2$),
so 
\begin{align*}
\#\left\{ \lambda\in\Lambda_{2}:\,\exists\xi\in S_{\zeta}\cap A\left(\lambda,L\right),\,\left|\xi\right|^{2}=n\right\}&\ll\#\tilde{A}\left(\lambda_{0},3\lambda_{0}^{-\delta}\right)\\
&\leq\lambda_{0}^{1/2-\delta+2\varepsilon}\\
&\ll n^{1/2-\delta+2\varepsilon}.
\end{align*}
We conclude that
\begin{align*}
\#\left\{ \lambda\in B_{\zeta}:\lambda\leq X\right\} &\leq\sum_{\begin{subarray}{c}
n\in\mathcal{N}_{\zeta}\\
n\leq2X
\end{subarray}}\iota^{-1}\left(n\right)\\
&\ll X^{1/2-\delta+2\varepsilon}\cdot\#\left\{ n\in\mathcal{N}_{\zeta}:\, n\leq2X\right\}\\
&\ll X^{3/2-\delta+2\varepsilon}
\end{align*}
proving our claim.
\end{proof}
From the last lemma we conclude that:
\begin{lem}
We have $\left\langle e_{\zeta}g_{\lambda,L},g_{\lambda,L}\right\rangle \to0$
as $\lambda\to\infty$ along $\lambda\in\Lambda_{\zeta}.$\end{lem}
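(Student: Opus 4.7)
The plan is to expand $\left\langle e_{\zeta}G_{\lambda,L},G_{\lambda,L}\right\rangle $ in the Fourier basis and show that, for $\lambda\in\Lambda_{\zeta}$ sufficiently large, the resulting sum is \emph{empty}, not merely small. Using $e_{\zeta}e_{\xi}=e_{\xi+\zeta}$ and the orthogonality relation $\int_{\mathbb{T}^{3}}e_{\xi-\eta}\,\mbox{d}x=0$ for $\xi\neq\eta$, expanding the double sum defining $G_{\lambda,L}$ collapses the inner product to
\begin{equation*}
\left\langle e_{\zeta}G_{\lambda,L},G_{\lambda,L}\right\rangle \asymp\sum_{\substack{\xi\in A(\lambda,L)\\ \xi+\zeta\in A(\lambda,L)}}\frac{1}{\left(\left|\xi\right|^{2}-\lambda\right)\left(\left|\xi+\zeta\right|^{2}-\lambda\right)}.
\end{equation*}
So it suffices to exclude the existence of any $\xi\in\mathcal{L}$ with both $\xi$ and $\xi+\zeta$ in $A(\lambda,L)$.

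For the exclusion I would use the elementary identity $\left|\xi+\zeta\right|^{2}-\left|\xi\right|^{2}=2\langle\xi+\zeta,\zeta\rangle-\left|\zeta\right|^{2}$. Writing $\eta:=\xi+\zeta$, if both $\xi,\eta\in A(\lambda,L)$ then
\begin{equation*}
\left|2\langle\eta,\zeta\rangle-\left|\zeta\right|^{2}\right|=\left|\left(\left|\eta\right|^{2}-\lambda\right)-\left(\left|\xi\right|^{2}-\lambda\right)\right|<2L=2\lambda^{-\delta}.
\end{equation*}
For $\lambda$ large enough, $2\lambda^{-\delta}<1/4c^{2}$, and also $\left|\eta\right|^{2}>\lambda-L\geq1$. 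Both defining inequalities of $S_{\zeta}$ then hold for $\eta$, hence $\eta\in A(\lambda,L)\cap S_{\zeta}$, contradicting $\lambda\in\Lambda_{\zeta}$. The only cases to rule out separately are $\eta=0$ or $\left|\eta\right|^{2}<1$, but these are impossible once $\lambda-L\geq1$, i.e. for all sufficiently large $\lambda$.

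Therefore, for every sufficiently large $\lambda\in\Lambda_{\zeta}$ the index set of the sum above is empty, so $\left\langle e_{\zeta}G_{\lambda,L},G_{\lambda,L}\right\rangle =0$. Since $\left\Vert G_{\lambda,L}\right\Vert _{2}\asymp\left\Vert G_{\lambda}\right\Vert _{2}\gg\lambda^{1/2-\varepsilon}$ (by Lemma \ref{lem:IrrationalNormBound} and the preceding lemma), in particular $\left\Vert G_{\lambda,L}\right\Vert _{2}\neq0$, and we conclude $\left\langle e_{\zeta}g_{\lambda,L},g_{\lambda,L}\right\rangle =0$ for all large $\lambda\in\Lambda_{\zeta}$, which is stronger than the claimed convergence to zero. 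There is essentially no obstacle here: the entire force of the argument is stored in the definition of $\Lambda_{\zeta}$, and the main work was already done in constructing $\Lambda_{\zeta}$ as a density one subset of $\Lambda_{2}$.
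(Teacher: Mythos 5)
Your proof is correct, and it is genuinely different from (and sharper than) the paper's. The paper observes only that the exclusion $A(\lambda,L)\cap S_{\zeta}=\emptyset$ forces $\left|\left|\xi-\zeta\right|^{2}-\lambda\right|\geq 1/4c^{2}-L\gg 1$ for every $\xi\in A(\lambda,L)$, uniformly bounding one factor in the denominator; it then estimates the remaining sum $\sum_{\xi\in A(\lambda,L)}\left|\left|\xi\right|^{2}-\lambda\right|^{-1}$ via Cauchy--Schwarz, the bound $\#A(\lambda,L)\ll L\lambda^{1/2+2\varepsilon}$ available for $\lambda\in\Lambda_{2}$, and the norm lower bound $\left\|G_{\lambda}\right\|_{2}\gg\lambda^{1/2-\varepsilon}$, obtaining decay of order $\lambda^{-1/4-\delta/2+2\varepsilon}$. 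You instead observe that the Fourier-orthogonality collapse of $\left\langle e_{\zeta}G_{\lambda,L},G_{\lambda,L}\right\rangle$ imposes the \emph{pair} constraint $\xi,\xi+\zeta\in A(\lambda,L)$ (the paper's displayed $\asymp$ quietly drops the second constraint, which is harmless as an upper bound), and that this pair constraint forces $\left|2\left\langle\eta,\zeta\right\rangle-\left|\zeta\right|^{2}\right|<2L$ for $\eta=\xi+\zeta$, placing $\eta$ in $S_{\zeta}\cap A(\lambda,L)$ once $2L<1/4c^{2}$ and $\lambda-L\geq 1$ -- a contradiction with $\lambda\in\Lambda_{\zeta}$. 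Hence the index set is empty and the truncated matrix element is identically zero for all large $\lambda\in\Lambda_{\zeta}$. Your route shows that, given the construction of $\Lambda_{\zeta}$, this lemma is essentially trivial and the Cauchy--Schwarz machinery (carried over from the standard-torus argument, where no such exclusion is available) is not needed here; the only ingredient you still use from the surrounding lemmas is $\left\|G_{\lambda,L}\right\|_{2}\neq 0$, which is immediate. Both proofs are valid; yours is cleaner and gives the stronger qualitative conclusion of exact vanishing.
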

\begin{proof}
We have
\[
\left\langle e_{\zeta}G_{\lambda,L},G_{\lambda,L}\right\rangle \asymp\sum_{\xi\in A\left(\lambda,L\right)}\frac{1}{\left(\left|\xi\right|^{2}-\lambda\right)\left(\left|\xi-\zeta\right|^{2}-\lambda\right)}.
\]
Note that
\[
\left|\left|\xi-\zeta\right|^{2}-\lambda\right|=\left|\left|\xi\right|^{2}-\lambda-2\left\langle \xi,\zeta\right\rangle +\left|\zeta\right|^{2}\right|\geq\left|2\left\langle \xi,\zeta\right\rangle -\left|\zeta\right|^{2}\right|-\left|\left|\xi\right|^{2}-\lambda\right|
\]
and since $\lambda\in\Lambda_{\zeta}$ we have $S_{\zeta}\cap A\left(\lambda,L\right)=\emptyset$,
so 
\[
\left|2\left\langle \xi,\zeta\right\rangle -\left|\zeta\right|^{2}\right|\geq\frac{1}{4c^{2}}.
\]
We get that for large enough $\lambda$ 
\[
\left|\left|\xi-\zeta\right|^{2}-\lambda\right|\geq\frac{1}{4c^{2}}-L\gg1
\]
and therefore
\begin{align*}
\left|\left\langle e_{\zeta}G_{\lambda,L},G_{\lambda,L}\right\rangle \right|&\ll\sum_{\xi\in A\left(\lambda,L\right)}\frac{1}{\left|\left(\left|\xi\right|^{2}-\lambda\right)\right|}\\
&\ll\left\|G_{\lambda}\right\|_{2}\left(\sum_{\xi\in A\left(\lambda,L\right)}1\right)^{1/2}.
\end{align*}
Since $\lambda\in\Lambda_{\zeta}\subseteq\Lambda_{2}$, and since
the multiplicities $r_{\mathcal{L}}\left(n\right)$ are bounded, we
have
\[
\sum_{\xi\in A\left(\lambda,L\right)}1=\#A\left(\lambda,L\right)\ll\#\tilde{A}\left(\lambda,3L\right)\leq L\lambda^{1/2+2\varepsilon}
\]
so 
\[
\left|\left\langle e_{\zeta}g_{\lambda,L},g_{\lambda,L}\right\rangle \right|\ll\frac{\left\|G_{\lambda}\right\|_{2}\lambda^{1/4-\delta/2+\varepsilon}}{\left\|G_{\lambda,L}\right\|_{2}^{2}}\ll\frac{\lambda^{1/4-\delta/2+\varepsilon}}{\left\|G_{\lambda}\right\|_{2}}\ll\lambda^{-1/4-\delta/2+2\varepsilon}
\]
which tends to zero as $\lambda\to\infty$ (for $\varepsilon>0$ small
enough).
\end{proof}
We conclude from Lemma \ref{lem:IrrationalConvSuff} that
\[
\left\langle e_{\zeta}g_{\lambda},g_{\lambda}\right\rangle \to0
\]
as $\lambda\to\infty$ along $\lambda\in\Lambda_{\zeta}.$

\subsection{Proof of Theorem \ref{thm:Main2}}

We now use a diagonalization argument to prove Theorem \ref{thm:Main2}:
\begin{thm*}
There is a density one subset $\Lambda_{\infty}\subseteq\Lambda$
so that for every observable $a\in C^{\infty}\left(\mathbb{T}^{3}\right),$
we have
\[
\left\langle ag_{\lambda},g_{\lambda}\right\rangle \to\frac{1}{\mbox{area}\left(\mathbb{T}^{3}\right)}\int_{\mathbb{T}^{3}}a\left(x\right)\mbox{d}x
\]
 as $\lambda\to\infty$ along $\Lambda_{\infty}$.\end{thm*}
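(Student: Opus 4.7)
The plan is to combine, by a diagonal procedure, the density-one sets $\Lambda_\zeta$ (which depend on small auxiliary parameters $\varepsilon,\delta$ that I fix once and for all to satisfy $0<\delta<\min\{(1-\theta)/2-\varepsilon,\,1/\tau-\varepsilon\}$) into a single density-one subset $\Lambda_\infty\subseteq\Lambda$ along which $\langle e_\zeta g_\lambda,g_\lambda\rangle\to 0$ simultaneously for every nonzero $\zeta\in\mathcal{L}$. Once this is in hand, the passage from the characters $e_\zeta$ to arbitrary $a\in C^\infty(\mathbb{T}^3)$ will proceed exactly as at the end of the proof of Theorem~\ref{thm:Main1}, via uniform approximation by trigonometric polynomials.

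First I enumerate the nonzero vectors of the dual lattice as $\mathcal{L}\setminus\{0\}=\{\zeta_1,\zeta_2,\ldots\}$ and put $\Lambda^{(k)}=\Lambda_{\zeta_1}\cap\cdots\cap\Lambda_{\zeta_k}$. Each $\Lambda^{(k)}$ is density one in $\Lambda$, since the complement of a finite intersection of density-one subsets of $\Lambda$ is a finite union of density-zero subsets. Hence for every $k$ I can pick a real $X_k$, strictly increasing in $k$ with $X_k\to\infty$, such that
\[
\frac{\#\{\lambda\in\Lambda^{(k)}:\lambda\le X\}}{\#\{\lambda\in\Lambda:\lambda\le X\}}\ge 1-\frac{1}{k}\qquad\mbox{for all }X\ge X_k,
\]
and then define
\[
\Lambda_\infty=\bigcup_{k\ge 1}\bigl(\Lambda^{(k)}\cap[X_k,X_{k+1})\bigr).
\]

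A short calculation shows that $\Lambda_\infty$ is density one in $\Lambda$: for $X\in[X_k,X_{k+1})$ one has $\Lambda_\infty\cap[X_k,X]\supseteq\Lambda^{(k)}\cap[X_k,X]$, so
\[
\frac{\#\{\lambda\in\Lambda_\infty:\lambda\le X\}}{\#\{\lambda\in\Lambda:\lambda\le X\}}\ge\Bigl(1-\frac{1}{k}\Bigr)-\frac{\#\{\lambda\in\Lambda:\lambda<X_k\}}{\#\{\lambda\in\Lambda:\lambda\le X\}},
\]
and as $X\to\infty$ the index $k=k(X)$ defined by $X_k\le X<X_{k+1}$ also tends to infinity, so the right-hand side tends to $1$. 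Moreover, by construction any $\lambda\in\Lambda_\infty$ with $\lambda\ge X_k$ lies in $\Lambda^{(j)}$ for some $j\ge k$, hence in $\Lambda_{\zeta_k}$; therefore the already-established convergence $\langle e_{\zeta_k}g_\lambda,g_\lambda\rangle\to 0$ along $\Lambda_{\zeta_k}$ yields the same convergence along $\Lambda_\infty$, for every fixed $k$.

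To close, for $a\in C^\infty(\mathbb{T}^3)$ and $\varepsilon'>0$ I approximate $a$ uniformly by a trigonometric polynomial $P=\sum_{|\zeta|\le J}p_\zeta e_\zeta$ with $\|a-P\|_\infty<\varepsilon'$; the previous paragraph then yields $\langle Pg_\lambda,g_\lambda\rangle\to p_{(0,0,0)}=(1/\mathrm{area}(\mathbb{T}^3))\int_{\mathbb{T}^3}P\,dx$ along $\Lambda_\infty$ (only finitely many nonzero-frequency terms appear), and the triangle inequality, exactly as in the proof of Theorem~\ref{thm:Main1}, gives $|\langle ag_\lambda,g_\lambda\rangle-(1/\mathrm{area}(\mathbb{T}^3))\int_{\mathbb{T}^3}a\,dx|<3\varepsilon'$ for all sufficiently large $\lambda\in\Lambda_\infty$. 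The only delicate point in the whole argument is the diagonalization itself: a naive countable intersection $\bigcap_{k}\Lambda_{\zeta_k}$ need not be density one, and the insertion of the thresholds $X_k$ is precisely what repairs this.
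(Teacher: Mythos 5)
Your diagonalization is the same construction as the paper's (the paper indexes by $J$ and takes the intersection over all $|\zeta|\le J$, using thresholds $M_J$ and the tolerance $1-2^{-J}$; you enumerate $\zeta_1,\zeta_2,\ldots$ and use $1-1/k$, but the mechanism is identical). However, your density-one verification has a genuine gap. The inequality
\[
\frac{\#\{\lambda\in\Lambda_\infty:\lambda\le X\}}{\#\{\lambda\in\Lambda:\lambda\le X\}}\ge\Bigl(1-\frac{1}{k}\Bigr)-\frac{\#\{\lambda\in\Lambda:\lambda<X_k\}}{\#\{\lambda\in\Lambda:\lambda\le X\}}
\]
is correct, but its right-hand side does \emph{not} tend to $1$ as $X\to\infty$: take $X=X_k$, and the subtracted fraction tends to $1$ (since $\#\{\lambda\le X_k\}-\#\{\lambda<X_k\}$ is bounded while $\#\{\lambda\le X_k\}\to\infty$), so your lower bound tends to $0$ along that subsequence. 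The culprit is the containment $\Lambda_\infty\cap[0,X]\supseteq\Lambda^{(k)}\cap[X_k,X]$, which discards all of $\Lambda_\infty$ below the moving threshold $X_k$.

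The repair is to exploit the nesting $\Lambda^{(1)}\supseteq\Lambda^{(2)}\supseteq\cdots$, which your construction satisfies automatically since $\Lambda^{(k)}=\Lambda_{\zeta_1}\cap\cdots\cap\Lambda_{\zeta_k}$. For every $j<k$ one then has $\Lambda_\infty\cap[X_j,X_{j+1})=\Lambda^{(j)}\cap[X_j,X_{j+1})\supseteq\Lambda^{(k)}\cap[X_j,X_{j+1})$, so $\Lambda_\infty\cap[X_1,X]\supseteq\Lambda^{(k)}\cap[X_1,X]$ whenever $X\in[X_k,X_{k+1})$. Using this containment replaces $X_k$ by the \emph{fixed} number $X_1$ in the subtracted term, which then does tend to $0$, and the density-one claim follows. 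This is exactly the role played in the paper by the explicit hypothesis $\Lambda_{J+1}\subseteq\Lambda_J$ and the observation that $\Lambda_\infty\cap[0,M_{J+1}]$ contains $\Lambda_J\cap[0,M_{J+1}]$. The remainder of your argument (simultaneous convergence for each fixed $\zeta_k$ along $\Lambda_\infty$, then passage to general $a$ by uniform approximation by trigonometric polynomials) is correct.
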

\begin{proof}
For $J\geq1$, let $\Lambda_{J}\subseteq\Lambda$ be of density one
so that for all $\left|\zeta\right|\leq J$, $\left\langle e_{\zeta}g_{\lambda},g_{\lambda}\right\rangle \to0$
as $\lambda\to\infty$ along $\Lambda_{J},$ and in particular for
every trigonometric polynomial $P_{J}\left(x\right)=\sum\limits _{\left|\zeta\right|\leq J}p_{\zeta}e_{\zeta}\left(x\right)$
we have
\begin{equation}
\left\langle P_{J}g_{\lambda},g_{\lambda}\right\rangle \to\frac{1}{\mbox{area}\left(\mathbb{T}^{3}\right)}\int_{\mathbb{T}^{3}}P_{J}\left(x\right)\mbox{d}x\label{eq:trig}
\end{equation}
along $\Lambda_{J}$. 

We can assume that $\Lambda_{J+1}\subseteq\Lambda_{J}$ for each $J$.
Choose $M_{J}$ so that $M_{J}\uparrow\infty$ as $J\to\infty$, and
so that for all $X>M_{J}$
\[
\frac{\#\left\{ \lambda\in\Lambda_{J}:\,\lambda\leq X\right\} }{\#\left\{ \lambda\in\Lambda:\,\lambda\leq X\right\} }\geq1-\frac{1}{2^{J}}
\]
and let $\Lambda_{\infty}$ be such that $\Lambda_{\infty}\cap\left[M_{J},M_{J+1}\right]=\Lambda_{J}\cap\left[M_{J},M_{J+1}\right]$
for all $J$. Then $\Lambda_{\infty}\cap\left[0,M_{J+1}\right]$ contains
$\Lambda_{J}\cap\left[0,M_{J+1}\right]$ and therefore $\Lambda_{\infty}$
has density one in $\Lambda$, and \eqref{eq:trig} holds for $\lambda\in\Lambda_{\infty}.$ 

The theorem now follows from the density of trigonometric polynomials
in $C^{\infty}\left(\mathbb{T}^{3}\right)$ in the uniform norm (as
in the proof of Theorem \ref{thm:Main1}).
\end{proof}
\appendix

\section{~}

\subsection{Integral Points in Spherical Strips}

We estimate the number of integral points inside some strips on 3D 
spheres. The strategy is to estimate the integral points on every
circle in the strip: a simple substitution reduces the problem to
counting integral points on two-dimensional ellipses, which could
be treated by some basic algebraic number theory.
\begin{lem}
\label{lem:IntSphericalStripsEst}Let $L=\lambda^{\delta},\,0<\delta<1/4$.
For every $0\neq\zeta\in\mathbb{Z}^{3},C_{1},C_{2}$ and $n$ such
that $\left|n-\lambda\right|<C_{1}L$, we have 
\[
\#\left\{ \eta\in\mathbb{Z}^{3}:\,\left|\eta\right|^{2}=n,\,\left|\left\langle \eta,\zeta\right\rangle \right|<C_{2}L\right\} \ll_{C_{1},C_{2},\zeta,\varepsilon}Ln^{\varepsilon}.
\]
\end{lem}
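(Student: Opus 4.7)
The plan is to slice the sphere by the family of parallel planes $\langle\,\cdot\,,\zeta\rangle=k$ ($k\in\mathbb{Z}$), and bound the number of integer points on each slice by counting representations of an integer by a fixed positive-definite binary quadratic form.

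Since $\zeta\in\mathbb{Z}^3$, the inner product $\langle\eta,\zeta\rangle$ takes integer values on $\mathbb{Z}^3$, so
\[
\#\bigl\{\eta\in\mathbb{Z}^3:|\eta|^2=n,\ |\langle\eta,\zeta\rangle|<C_2L\bigr\}=\sum_{|k|<C_2L}N_k(n),
\]
where $N_k(n):=\#\{\eta\in\mathbb{Z}^3:|\eta|^2=n,\ \langle\eta,\zeta\rangle=k\}$. The number of admissible integers $k$ is $O_{C_2}(L)$, so it suffices to show $N_k(n)\ll_{\zeta,\varepsilon}n^\varepsilon$ uniformly in $k$.

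Fix $k$ and assume the slice is nonempty. The sublattice $\Lambda_\zeta:=\zeta^\perp\cap\mathbb{Z}^3$ has rank $2$, and I fix once and for all a $\mathbb{Z}$-basis $\{v_1,v_2\}$ of $\Lambda_\zeta$ depending only on $\zeta$, together with a particular integer solution $\eta^*_k$ of $\langle\,\cdot\,,\zeta\rangle=k$. Writing $\eta=\eta^*_k+m_1v_1+m_2v_2$ with $(m_1,m_2)\in\mathbb{Z}^2$, the equation $|\eta|^2=n$ becomes
\[
Q(m_1,m_2)+2(b_1m_1+b_2m_2)=n-|\eta^*_k|^2,
\]
where $Q$ is the positive-definite binary quadratic form with Gram matrix $(\langle v_i,v_j\rangle)_{i,j}$, depending only on $\zeta$, and $b_i=\langle\eta^*_k,v_i\rangle\in\mathbb{Z}$. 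Completing the square over $\mathbb{Q}$ and clearing denominators converts this into the problem of counting integer solutions, lying in a fixed congruence class modulo a constant depending only on $\zeta$, of an equation of the form $\tilde Q(x_1,x_2)=M$ where $\tilde Q$ is a fixed positive-definite integral binary quadratic form of discriminant depending only on $\zeta$, and $M$ is an integer of size $|M|\ll_\zeta n$.

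The final ingredient is the classical bound on representations by an integral positive-definite binary quadratic form of fixed discriminant $D<0$: via the correspondence with integral ideals in the order of discriminant $D$ in an imaginary quadratic field, the number of such representations of $M$ is $O_D(d(|M|))$, where $d(\cdot)$ is the divisor function, and $d(|M|)\ll_\varepsilon |M|^\varepsilon$. This yields $N_k(n)\ll_{\zeta,\varepsilon}n^\varepsilon$ uniformly in $k$, and summing over $|k|<C_2L$ gives the desired bound $\ll_{C_1,C_2,\zeta,\varepsilon}Ln^\varepsilon$. The main technical step is this last bound on representations by a binary quadratic form; crucially, the discriminant of $\tilde Q$ and the modulus of the relevant congruence condition depend only on $\zeta$, so the implied constant may indeed be taken uniform in $k$ and in $n$.
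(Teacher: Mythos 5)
Your proof is correct and takes essentially the same route as the paper's: both slice the sphere by the integer level sets of $\langle\cdot,\zeta\rangle$ (of which there are $O(L)$ nonempty ones), reduce each slice to counting representations by a positive-definite binary quadratic form of discriminant fixed by $\zeta$, and invoke the $\ll\tau(k)\ll n^{\varepsilon}$ bound on such representations via factorization of ideals in an imaginary quadratic field. The only cosmetic difference is the parametrization of a slice --- you use a $\mathbb{Z}$-basis of $\zeta^{\perp}\cap\mathbb{Z}^{3}$ together with a particular solution $\eta^{*}_{k}$, whereas the paper assumes $\zeta_{3}\neq0$, eliminates $z$, and completes the square repeatedly to arrive at $x^{2}+Dy^{2}=k$.
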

\begin{proof}
Denote $\zeta=\left(\zeta_{1},\zeta_{2},\zeta_{3}\right)$, and without
loss of generality we can assume that $\zeta_{3}\neq0$. For $\eta=\left(x,y,z\right)$
with $\left|\eta\right|^{2}=n$, $\left\langle \eta,\zeta\right\rangle =m$,
$\left|m\right|<C_{2}L$, we have $x^{2}+y^{2}+z^{2}=n$, $\zeta_{1}x+\zeta_{2}y+\zeta_{3}z=m$.
since $\zeta_{3}\neq0$ we get $z=\frac{m-\zeta_{1}x-\zeta_{2}y}{\zeta_{3}}$,
and substitution gives:
\begin{equation}
ax^{2}+2bxy+cy^{2}+2dx+2ey+f=0\label{eq:RationalStrip1}
\end{equation}
where
\begin{alignat*}{1}
a= & \zeta_{1}^{2}+\zeta_{3}^{2}\\
b= & \zeta_{1}\zeta_{2}\\
c= & \zeta_{2}^{2}+\zeta_{3}^{2}\\
d= & -\zeta_{1}m\\
e= & -\zeta_{2}m\\
f= & -\zeta_{3}^{2}n+m^{2}.
\end{alignat*}
Note that $c>0,\, ac-b^{2}=\zeta_{3}^{2}\left(\zeta_{1}^{2}+\zeta_{2}^{2}+\zeta_{3}^{2}\right)>0$.

Completing the square using the fact that $c\neq0$, we get:
\[
\left(ac-b^{2}\right)x^{2}+\left(bx+cy\right)^{2}+2cdx+2cey+cf=0.
\]
Setting $y'=bx+cy$ (and then $y=\frac{y'-bx}{c})$, we get that the
number of integer solutions to equation \eqref{eq:RationalStrip1}
is bounded by the number of integer solutions to the equation 
\begin{equation}
\left(ac-b^{2}\right)x^{2}+y^{2}+2\left(cd-be\right)x+2ey+cf=0.\label{eq:RationalStrip2}
\end{equation}
Completing the square again, we get
\[
\left(ac-b^{2}\right)x^{2}+\left(y+e\right)^{2}+2\left(cd-be\right)x+cf-e^{2}=0.
\]
Setting $y'=y+e$, we get that the number of integer solutions to
equation \eqref{eq:RationalStrip2} is equal to the number of integer
solutions to
\begin{equation}
\left(ac-b^{2}\right)x^{2}+y^{2}+2\left(cd-be\right)x+cf-e^{2}=0.\label{eq:RationalStrip3}
\end{equation}
Completing the square for the last time using the fact that $ac-b^{2}\neq0$,
we get
\begin{align*}
\left(\left(ac-b^{2}\right)x+\left(cd-be\right)\right)^{2}&+\left(ac-b^{2}\right)y^{2}\\
&+\left(ac-b^{2}\right)\left(cf-e^{2}\right)-\left(cd-be\right)^{2}=0.
\end{align*}
Setting $x'=\left(ac-b^{2}\right)x+\left(cd-be\right)$, we get that
the number of integer solutions to equation \eqref{eq:RationalStrip3}
is bounded by the number of integer solutions to
\begin{equation}
x^{2}+\left(ac-b^{2}\right)y^{2}=\left(ac-b^{2}\right)\left(-cf+e^{2}\right)+\left(cd-be\right)^{2}.\label{eq:RationalStrip4}
\end{equation}
Denote $ac-b^{2}=t^{2}D,$ where $D>0$ is squarefree, and 
\[
k=\left(ac-b^{2}\right)\left(-cf+e^{2}\right)+\left(cd-be\right)^{2}.
\]
We get
\[
x^{2}+D\left(ty\right)^{2}=k
\]
and setting $y'=ty,$ we get that the number of integer solutions
to equation \eqref{eq:RationalStrip4} is bounded by the number of
integer solutions to
\[
x^{2}+Dy^{2}=k
\]
i.e. by the number $r_{D}\left(k\right)$ of representations of an
integer $k$ by the quadratic form $x^{2}+Dy^{2}$. Now we claim that
\begin{equation}
r_{D}\left(k\right)\leq6\tau\left(k\right)\label{eq:RationalStrip5}
\end{equation}
where $\tau\left(k\right)$ is the number of divisors of $k$. Since
\[
\tau\left(k\right)\ll_{\varepsilon}k^{\varepsilon}\ll_{C_{1},C_{2},\zeta,\varepsilon}n^{\varepsilon}
\]
we conclude that 
\[
\#\left\{ \eta\in\mathbb{Z}^{3}:\,\left|\eta\right|^{2}=n,\,\left|\left\langle \eta,\zeta\right\rangle \right|<C_{2}L\right\} \ll_{C_{1},C_{2},\zeta,\varepsilon}Ln^{\varepsilon}
\]
as claimed. 

The estimate \eqref{eq:RationalStrip5} follows from factorization
into prime ideals in the ring of integers $\mathbb{A}$ of the imaginary
quadratic extension $\mathbb{Q}\left(\sqrt{-D}\right)$. Indeed, given
any prime $p$, consider the principal ideal $\left\langle p\right\rangle $
in $\mathbb{A}$. Then (cf. \cite{Mollin}) either $\left\langle p\right\rangle $
is a prime ideal, or $\left\langle p\right\rangle =\mathcal{P}_{1}\mathcal{P}_{2}$,
where $\mathcal{P}_{1},\mathcal{P}_{2}$ are prime ideals (and not
necessarily different). The fundamental theorem of arithmetic yields
\[
k=\prod_{\left\langle q_{j}\right\rangle \, is\, prime}q_{j}^{\beta_{j}}\prod_{\left\langle p_{l}\right\rangle =\mathcal{P}_{l,1}\mathcal{P}_{l,2}}p_{l}^{\alpha_{l}}
\]
so we get the unique factorization
\[
\left\langle k\right\rangle =\prod\left\langle q_{j}\right\rangle ^{\beta_{j}}\prod\mathcal{P}_{l,1}^{\alpha_{l}}\mathcal{P}_{l,2}^{\alpha_{l}}.
\]
Each representation of $k=x^{2}+Dy^{2}$ corresponds to a decomposition
of the principal ideal 
\[
\left\langle k\right\rangle =\left\langle x+y\sqrt{-D}\right\rangle \left\langle x-y\sqrt{-D}\right\rangle 
\]
where $N\left(\left\langle x+y\sqrt{-D}\right\rangle \right)=N\left(\langle x-y\sqrt{-D}\rangle\right)=k$,
so from the uniqueness of the factorization we get that
\begin{align*}
\left\langle x+y\sqrt{-D}\right\rangle =\prod\left\langle q_{j}\right\rangle ^{\beta_{j}/2}\prod\mathcal{P}_{l,1}^{\gamma_{l}}\mathcal{P}_{l,2}^{\alpha_{l}-\gamma_{l}}\\
\left\langle x-y\sqrt{-D}\right\rangle =\prod\left\langle q_{j}\right\rangle ^{\beta_{j}/2}\prod\mathcal{P}_{l,1}^{\alpha_{l}-\gamma_{l}}\mathcal{P}_{l,2}^{\gamma_{l}}
\end{align*}
where $0\leq\gamma_{l}\leq\alpha_{l}$. From this follows that the
number of possibilities for $\left\langle x+y\sqrt{-D}\right\rangle $
is bounded by $\prod\left(1+\alpha_{l}\right)\leq\tau\left(k\right)$.
But $\left\langle x+y\sqrt{-D}\right\rangle =\left\langle x'+y'\sqrt{-D}\right\rangle $
if and only if $x+y\sqrt{-D}$ and $x'+y'\sqrt{-D}$ are associates
in $\mathbb{A}$, and since the number of units in $\mathbb{A}$ is
at most $6$, we get that $r_{D}\left(k\right)\leq6\tau\left(k\right)$.
\end{proof}

\subsection{\label{sub:AppDiscrepancy}Discrepancy}

We prove some results from the theory of uniform distribution modulo
$1$. Most of this section is adapted from \cite{Kuipers}.

Let us recall the definition of discrepancy:
\begin{defn}
Let $\left(x_{n}\right)$ be a sequence of real numbers. The number
\[
D_{N}=\sup_{0\leq a<b\leq1}\left|\frac{\#\left\{ n\leq N:\,\left\{ x_{n}\right\} \in[a,b)\right\} }{N}-\left(b-a\right)\right|
\]
is called the discrepancy of the given sequence.
\end{defn}
A useful upper bound for the discrepancy is given by the theorem of
Erd\H{o}s-Turán \cite{Erdos}:
\begin{thm}
There exists an absolute constant $C$, such that for any real numbers
$x_{1},\dots,x_{N}$ and for any positive integer $m$, we have
\[
D_{N}\leq C\left(\frac{1}{m}+\sum_{h=1}^{m}\frac{1}{h}\left|\frac{1}{N}\sum_{n=1}^{N}e^{2\pi ihx_{n}}\right|\right).
\]

\end{thm}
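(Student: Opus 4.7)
The plan is to approximate the characteristic function $\chi_{[a,b)}$ of an interval by a pair of trigonometric polynomials of degree at most $m$, one sandwiching from above and one from below, whose zeroth Fourier coefficient is close to $b-a$ and whose higher Fourier coefficients decay like $1/h$. Once such a majorant/minorant is available, one replaces the counting function by its Fourier expansion and the exponential sums $(1/N)\sum_n e^{2\pi i h x_n}$ appear naturally on the right-hand side.

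\textbf{Step 1: Selberg-Beurling polynomials.} I would invoke (or construct) trigonometric polynomials $S_m^{\pm}(\alpha)$ of degree at most $m$, depending on $a$, $b$, $m$, with the properties
\[
S_m^{-}(\alpha)\le\chi_{[a,b)}(\{\alpha\})\le S_m^{+}(\alpha)\qquad\text{for all }\alpha\in\mathbb{R},
\]
\[
\widehat{S_m^{\pm}}(0)=(b-a)\pm\tfrac{1}{m+1},\qquad \bigl|\widehat{S_m^{\pm}}(h)\bigr|\le\tfrac{1}{m+1}+\tfrac{1}{\pi|h|}\quad(1\le|h|\le m),
\]
and $\widehat{S_m^{\pm}}(h)=0$ for $|h|>m$. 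These are the classical Selberg polynomials, built by periodizing Beurling's entire majorant of $\operatorname{sgn}(x)$ (whose Fourier transform is supported in $[-1,1]$) and taking linear combinations of translates. This is the one non-routine input; the construction, together with the stated extremal properties, is standard material (see Montgomery, \emph{Ten Lectures on the Interface Between Analytic Number Theory and Harmonic Analysis}, Ch.~1, or the Vaaly polynomials in Kuipers-Niederreiter).

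\textbf{Step 2: From the majorant to a one-sided bound.} Summing $S_m^{+}$ at the points $x_n$ and expanding its Fourier series (which is finite),
\[
\frac{1}{N}\sum_{n=1}^{N}S_m^{+}(x_n)=\widehat{S_m^{+}}(0)+\sum_{1\le|h|\le m}\widehat{S_m^{+}}(h)\cdot\frac{1}{N}\sum_{n=1}^{N}e^{2\pi i h x_n}.
\]
Since $S_m^{+}\ge\chi_{[a,b)}$, the left-hand side dominates $A([a,b);N)/N$, while $\widehat{S_m^{+}}(0)=(b-a)+1/(m+1)$. Pairing $h$ with $-h$ and using the Fourier-coefficient bounds from Step 1, this yields
\[
\frac{A([a,b);N)}{N}-(b-a)\le\frac{1}{m+1}+2\sum_{h=1}^{m}\Bigl(\tfrac{1}{m+1}+\tfrac{1}{\pi h}\Bigr)\Bigl|\tfrac{1}{N}\sum_{n=1}^{N}e^{2\pi i h x_n}\Bigr|,
\]
and since $\sum_{h=1}^{m}\frac{1}{m+1}\le 1\le\sum_{h=1}^{m}\frac{1}{h}$, the parenthesized factor is absorbed into a constant multiple of $1/h$. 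Repeating the argument with $S_m^{-}$ gives the matching lower bound, so
\[
\Bigl|\tfrac{A([a,b);N)}{N}-(b-a)\Bigr|\le C\Bigl(\tfrac{1}{m}+\sum_{h=1}^{m}\tfrac{1}{h}\Bigl|\tfrac{1}{N}\sum_{n=1}^{N}e^{2\pi i h x_n}\Bigr|\Bigr)
\]
for an absolute constant $C$.

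\textbf{Step 3: Conclusion.} The right-hand side is independent of $a$ and $b$, so taking the supremum over $0\le a<b\le 1$ gives the bound on $D_N$ and proves the theorem. The main obstacle is Step 1: the Selberg-Beurling extremal construction, which simultaneously controls the sign of $S_m^{\pm}-\chi_{[a,b)}$ and the size of the Fourier coefficients, is the delicate ingredient; everything else is bookkeeping with the Fourier expansion.
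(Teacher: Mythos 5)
The paper does not prove this statement: it is the classical Erd\H{o}s--Tur\'{a}n inequality, and the paper simply cites it (to Erd\H{o}s and Tur\'{a}n~\cite{Erdos}; Kuipers--Niederreiter~\cite{Kuipers} is the background reference for the appendix) before applying it. So there is no proof in the paper for your argument to diverge from; what you should be judged against is the literature. Your proof is correct. The expansion in Step~2 is right, the $h\leftrightarrow -h$ pairing is justified because $S_m^{\pm}$ is real so $\widehat{S_m^{\pm}}(-h)=\overline{\widehat{S_m^{\pm}}(h)}$, and the coefficient bound $\frac{1}{m+1}+\frac{1}{\pi h}\le \frac{2}{h}$ for $1\le h\le m$ (pointwise, since $\frac{1}{m+1}\le\frac{1}{h}$) is the clean way to absorb it --- your phrasing via $\sum 1/(m+1)\le 1\le\sum 1/h$ is a slightly roundabout way of saying the same thing but the conclusion is sound. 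Worth noting, though, that this is \emph{not} the route Erd\H{o}s--Tur\'{a}n or Kuipers--Niederreiter take: the classical proof constructs a one-sided trigonometric approximant to $\chi_{[a,b)}$ more or less by hand from Fej\'{e}r-type kernels, with considerably more bookkeeping and worse implied constants, whereas you outsource all of that delicacy to the Selberg--Beurling extremal functions and then the proof is genuinely just three lines of Fourier expansion. The trade is that the Selberg--Beurling construction (Beurling's band-limited majorant of $\operatorname{sgn}$, periodized and combined into $S_m^{\pm}$) is itself a substantial piece of machinery, which you correctly flag as the one non-routine ingredient; if you were writing this out in full rather than citing Montgomery or Vaaler, it would not be shorter than the classical proof. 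Both yield the stated inequality with an absolute constant, so for the paper's purposes the two routes are interchangeable.
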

Let $\alpha\in\mathbb{R}$ be an irrational of finite type $\tau$,
and let $\beta\in\mathbb{R}$. Define $x_{n}=\alpha n+\beta.$ By
the theorem of Erd\H{o}s-Turán, the discrepancy of $\left(x_{n}\right)$
is bounded for any positive integer $m$ by
\begin{align}
D_{N}&\ll\frac{1}{m}+\frac{1}{N}\sum_{h=1}^{m}\frac{1}{h}\left|\sum_{n=1}^{N}e^{2\pi ih\left(n\alpha+\beta\right)}\right|\nonumber \\
&=\frac{1}{m}+\frac{1}{N}\sum_{h=1}^{m}\frac{1}{h}\left|\sum_{n=1}^{N}e^{2\pi ihn\alpha}\right|\nonumber \\
&=\frac{1}{m}+\frac{1}{N}\sum_{h=1}^{m}\frac{1}{h}\frac{\left|1-e^{2\pi ihN\alpha}\right|}{\left|1-e^{2\pi ih\alpha}\right|}\nonumber\\
&\leq\frac{1}{m}+\frac{1}{N}\sum_{h=1}^{m}\frac{1}{h}\frac{2}{\left|1-e^{2\pi ih\alpha}\right|}\label{eq:DiscEst}\\
&=\frac{1}{m}+\frac{1}{N}\sum_{h=1}^{m}\frac{1}{h}\frac{1}{\left|\sin\pi h\alpha\right|}\nonumber\\
&\leq\frac{1}{m}+\frac{1}{2N}\sum_{h=1}^{m}\frac{1}{h}\frac{1}{\left\|h\alpha\right\|}\nonumber.
\end{align}
From here we could continue by
\[
\frac{1}{m}+\frac{1}{2N}\sum_{h=1}^{m}\frac{1}{h}\frac{1}{\left\|h\alpha\right\|}\ll\frac{1}{m}+\frac{1}{2N}\sum_{h=1}^{m}h^{\tau+\varepsilon-1}\ll\frac{1}{m}+\frac{m^{\tau+\varepsilon}}{N}
\]
and choosing $m=\lfloor N^{1/\left(1+\tau\right)}\rfloor$ we could
deduce that $D_{N}=O\left(N^{-1/\left(1+\tau\right)+\varepsilon}\right)$.
But with a very little effort we can get a better estimate for the
the RHS of \eqref{eq:DiscEst}:
\begin{lem}
\label{lem:OneOverDistEst}Let $\alpha$ be an irrational of finite
type $\tau$, and let $m$ be a positive integer. Then for every $\epsilon>0$,
there exists a positive constant $c=c\left(\alpha,\varepsilon\right)$
such that 
\[
\sum_{h=1}^{m}\frac{1}{\left\|h\alpha\right\|}\leq cm^{\tau+\varepsilon}.
\]
\end{lem}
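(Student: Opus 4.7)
The plan is to partition the sum according to dyadic ranges of $\|h\alpha\|$ and bound the count of $h$ falling into each range by exploiting the finite type hypothesis twice: once to truncate the dyadic decomposition at a finite level, and once through a pigeonhole argument to control the count at each level.

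First I would record the direct consequence of the finite type hypothesis: for any $\varepsilon'>0$ there is $c_{0}=c_{0}(\alpha,\varepsilon')$ with $\|h\alpha\|\ge c_{0}h^{-\tau-\varepsilon'}$ for every positive integer $h$. In particular, for $h\le m$ we have $\|h\alpha\|\ge c_{0}m^{-\tau-\varepsilon'}$, so only the dyadic intervals $I_{j}:=[2^{-j-1},2^{-j})$ with $2^{-j}\gtrsim m^{-\tau-\varepsilon'}$ are relevant; that is, $0\le j\le J$ where $J\le(\tau+\varepsilon')\log_{2}m+O(1)$.

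The key step is the following counting estimate: if $N(\delta):=\#\{h\le m:\|h\alpha\|<\delta\}$, then
\[
N(\delta)\ll 1+m\,\delta^{1/(\tau+\varepsilon')}.
\]
To see this, suppose $N(\delta)\ge 2$; by pigeonhole there exist two counted indices $h_{1}<h_{2}$ with $d:=h_{2}-h_{1}\le m/(N(\delta)-1)$, and since $\|d\alpha\|\le\|h_{1}\alpha\|+\|h_{2}\alpha\|<2\delta$, the finite type bound applied to $d$ gives $c_{0}d^{-\tau-\varepsilon'}<2\delta$, i.e.\ $d\ge c_{1}\delta^{-1/(\tau+\varepsilon')}$. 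Rearranging yields the claimed bound on $N(\delta)$.

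With these tools, each $h$ with $\|h\alpha\|\in I_{j}$ contributes at most $2^{j+1}$ to the sum, and the number of such $h$ is at most $N(2^{-j})\ll 1+m\,2^{-j/(\tau+\varepsilon')}$. Summing,
\[
\sum_{h=1}^{m}\frac{1}{\|h\alpha\|}\ll\sum_{j=0}^{J}2^{j+1}\Bigl(1+m\,2^{-j/(\tau+\varepsilon')}\Bigr)\ll 2^{J+1}+m\sum_{j=0}^{J}2^{j(1-1/(\tau+\varepsilon'))}.
\]
Since $\tau+\varepsilon'>1$, the exponent $1-1/(\tau+\varepsilon')$ is positive, so the last geometric sum is dominated by its top term $\asymp 2^{J(1-1/(\tau+\varepsilon'))}$. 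Using $2^{J}\ll m^{\tau+\varepsilon'}$ both terms are $\ll m^{\tau+\varepsilon'}$, and choosing $\varepsilon'=\varepsilon/2$ (say) absorbs the implicit constants into $c(\alpha,\varepsilon)m^{\tau+\varepsilon}$.

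The main obstacle is the counting step: one must be careful to separate the ``$+1$'' contribution (for the range where $N(\delta)$ is too small for pigeonhole to give information) from the genuine term, and to verify that the dyadic summation does not accumulate a logarithmic loss beyond what $\varepsilon$ already absorbs; both are handled by the positivity of $1-1/(\tau+\varepsilon')$.
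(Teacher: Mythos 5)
Your proof is correct, but it takes a genuinely different route from the paper's. Both arguments hinge on applying the finite type bound to differences $(q_2-q_1)\alpha$, but the organization differs. The paper observes directly that any two distinct values $\left\|q_1\alpha\right\|$, $\left\|q_2\alpha\right\|$ with $0\le q_1<q_2\le m$ differ by at least $c/(2m)^{\tau+\varepsilon/2}$ (via $\left|\left\|q_2\alpha\right\|-\left\|q_1\alpha\right\|\right|\ge\left\|(q_2\pm q_1)\alpha\right\|$), so after sorting, the $k$-th smallest value is at least $kc/(2m)^{\tau+\varepsilon/2}$; summing the reciprocals gives a harmonic sum, hence $\ll m^{\tau+\varepsilon/2}\log m\ll m^{\tau+\varepsilon}$. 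You instead bound the counting function $N(\delta)=\#\{h\le m:\left\|h\alpha\right\|<\delta\}$ by a pigeonhole-plus-difference argument, then perform a dyadic decomposition of the sum into ranges of $\left\|h\alpha\right\|$, which requires checking that $1-1/(\tau+\varepsilon')>0$ to control the geometric sum. The paper's well-spacing argument is shorter and avoids the dyadic bookkeeping (no need to track the positivity of the exponent or the truncation level $J$); your approach, though slightly longer here, is the more modular one, since a bound on $N(\delta)$ is a reusable ingredient in many problems of this type. One minor remark: since $\left\|h\alpha\right\|\le 1/2$ always, your dyadic index $j$ effectively starts at $1$ rather than $0$, but this has no effect on the estimate.
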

\begin{proof}
For $0\leq q_{1}<q_{2}\leq m$, we have
\[
\left\|q_{2}\alpha\pm q_{1}\alpha\right\|=\left\|\left(q_{2}\pm q_{1}\right)\alpha\right\|\geq\frac{c}{\left(q_{2}\pm q_{1}\right)^{\tau+\varepsilon/2}}\geq\frac{c}{\left(2m\right)^{\tau+\varepsilon/2}}.
\]
But $\left\|q_{1}\alpha\right\|=\left|q_{1}\alpha-n_{1}\right|$
for some integer $n_{1}$, and $\left\|q_{2}\alpha\right\|=\left|q_{2}\alpha-n_{2}\right|$
for some integer $n_{2}$, and hence
\begin{align*}
\left|\left\|q_{2}\alpha\right\|-\left\|q_{1}\alpha\right\|\right|&=\left|\left|q_{2}\alpha-n_{2}\right|-\left|q_{1}\alpha-n_{1}\right|\right|\\
&\geq\left\|q_{2}\alpha\pm q_{1}\alpha\right\|\\
&\geq\frac{c}{\left(2m\right)^{\tau+\varepsilon/2}}.
\end{align*}
This implies that in each of the intervals
\[
[0,\frac{c}{\left(2m\right)^{\tau+\varepsilon/2}}),\,[\frac{c}{\left(2m\right)^{\tau+\varepsilon/2}},\frac{2c}{\left(2m\right)^{\tau+\varepsilon/2}}),\,\dots,\,[\frac{mc}{\left(2m\right)^{\tau+\varepsilon/2}},\frac{\left(m+1\right)c}{\left(2m\right)^{\tau+\varepsilon/2}})
\]
there is at most one number of the form $\left\|h\alpha\right\|,$
$1\leq h\leq m$, with no such number in the first interval. Therefore
\[
\sum_{h=1}^{m}\frac{1}{\left\|h\alpha\right\|}\leq\sum_{h=1}^{m}\frac{\left(2m\right)^{\tau+\varepsilon/2}}{hc}=\frac{\left(2m\right)^{\tau+\varepsilon/2}}{c}\sum_{h=1}^{m}\frac{1}{h}\leq\tilde{c}m^{\tau+\varepsilon}.\qedhere
\]
\end{proof}
\begin{cor}
\label{cor:OneOverDistSmooth}Let $\alpha$ be an irrational of finite
type $\tau$, and let $m$ be a positive integer. Then for every $\epsilon>0$,
there exists a positive constant $c=c\left(\alpha,\varepsilon\right)$
such that
\[
\sum_{h=1}^{m}\frac{1}{h\left\|h\alpha\right\|}\leq cm^{\tau-1+\varepsilon}.
\]
\end{cor}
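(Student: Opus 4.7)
The plan is to deduce Corollary \ref{cor:OneOverDistSmooth} from Lemma \ref{lem:OneOverDistEst} by Abel summation. Set $S_{k}=\sum_{h=1}^{k}\frac{1}{\left\|h\alpha\right\|}$, and apply Lemma \ref{lem:OneOverDistEst} (with $\varepsilon/2$ in place of $\varepsilon$) to obtain $S_{k}\le ck^{\tau+\varepsilon/2}$ for a constant $c=c\left(\alpha,\varepsilon\right)$.

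Writing $\frac{1}{\left\|h\alpha\right\|}=S_{h}-S_{h-1}$ and summing by parts yields
\[
\sum_{h=1}^{m}\frac{1}{h\left\|h\alpha\right\|}=\frac{S_{m}}{m}+\sum_{h=1}^{m-1}S_{h}\left(\frac{1}{h}-\frac{1}{h+1}\right)=\frac{S_{m}}{m}+\sum_{h=1}^{m-1}\frac{S_{h}}{h\left(h+1\right)}.
\]
Inserting the bound on $S_{h}$ and using $\frac{1}{h\left(h+1\right)}\le\frac{1}{h^{2}}$ reduces the problem to estimating
\[
m^{\tau-1+\varepsilon/2}+\sum_{h=1}^{m-1}h^{\tau-2+\varepsilon/2}.
\]
Since $\tau\ge1$, the exponent $\tau-2+\varepsilon/2$ exceeds $-1$, so the tail sum is comparable to $m^{\tau-1+\varepsilon/2}$ by an integral comparison; combining this with the boundary term and relaxing $\varepsilon/2$ to $\varepsilon$ yields the claimed bound $\ll m^{\tau-1+\varepsilon}$.

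The argument is entirely routine and presents no real obstacle; the only mild subtlety is the borderline case $\tau=1$ (possible by Dirichlet), in which the tail $\sum h^{-1+\varepsilon/2}$ nearly matches the main term and contributes a benign factor comfortably absorbed by the slack $\varepsilon/2\to\varepsilon$. An equivalent route is a dyadic decomposition of $\left\{ 1,\dots,m\right\}$ into blocks $\left[2^{k},2^{k+1}\right)$, on each of which $1/h\asymp2^{-k}$ while $\sum_{h<2^{k+1}}1/\left\|h\alpha\right\|\ll2^{k\left(\tau+\varepsilon\right)}$ by Lemma \ref{lem:OneOverDistEst}, so each block contributes $\ll 2^{k\left(\tau-1+\varepsilon\right)}$ and summing over $k\le\log_{2}m$ gives the same final estimate.
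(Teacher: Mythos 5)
Your proof is correct and takes essentially the same route as the paper: both deduce the corollary from Lemma \ref{lem:OneOverDistEst} by partial summation, with you using the discrete Abel form $\frac{S_m}{m}+\sum_{h<m}\frac{S_h}{h(h+1)}$ and the paper comparing with the integral $\frac{S(m)}{m}+\int_1^m \frac{S(t)}{t^2}\,\mathrm{d}t$; these are interchangeable. Your remark on the borderline $\tau=1$ and the alternative dyadic decomposition are sound but not substantively different from the paper's treatment.
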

\begin{proof}
Define $S\left(t\right)=\sum\limits _{h\leq t}\frac{1}{\left\|h\alpha\right\|}$.

From Lemma \ref{lem:OneOverDistEst} we have $S\left(t\right)=S\left(\lfloor t\rfloor\right)\leq c\lfloor t\rfloor^{\tau+\varepsilon}\leq ct^{\tau+\varepsilon}.$
Using partial summation we conclude that
\begin{align*}
\sum_{h=1}^{m}\frac{1}{h\left\|h\alpha\right\|}&=\frac{S\left(m\right)}{m}+\int_{1}^{m}\frac{S\left(t\right)}{t^{2}}\mbox{d}t\\
&\leq cm^{\tau-1+\varepsilon}+c\int_{1}^{m}\frac{\mbox{d}t}{t^{2-\tau-\varepsilon}}\\
&=c\left(m^{\tau-1+\varepsilon}+\frac{m^{\tau-1+\varepsilon}-1}{\tau-1+\varepsilon}\right)\\
&\leq\tilde{c}m^{\tau-1+\varepsilon}.\qedhere
\end{align*}

\end{proof}
Substituting Corollary \ref{cor:OneOverDistSmooth} in \eqref{eq:DiscEst},
we conclude that $D_{N}\ll_{\alpha,\varepsilon}\frac{1}{m}+\frac{m^{\tau-1+\varepsilon}}{N}$,
and choosing $m=\lfloor N^{1/\tau}\rfloor$, we get that there exists a positive constant $ c=c\left(\alpha,\varepsilon\right) $ such that
\begin{equation}
D_{N}\le cN^{-1/\tau+\varepsilon}.\label{eq:DiscrepancyEst}
\end{equation}
Note that the constant $ c $ does not depend on $ \beta $.

\end{document}